\DeclareSymbolFont{cyrletters}{OT2}{wncyr}{m}{n}
\DeclareMathSymbol{\Sha}{\mathalpha}{cyrletters}{"58}
\DeclareMathOperator{\Span}{Span}
\DeclareMathOperator{\SL}{SL}
\DeclareMathOperator{\GL}{GL}
\newcommand{\Hup}{\mathbb{H}}
\newcommand{\A}{\mathbb{A}}
\newcommand{\Q}{{\mathbb Q}}
\newcommand{\Z}{{\mathbb Z}}
\newcommand{\N}{{\mathbb N}}
\newcommand{\C}{{\mathbb C}}
\newcommand{\R}{{\mathbb R}}
\newcommand{\kro}[2]{\left( \frac{#1}{#2} \right) }
\newcommand{\mat}[4]{\left(\begin{matrix} #1 & #2 \\ #3 & #4 \end{matrix}\right)}
            \DeclareFontFamily{U}{wncy}{}
            \DeclareFontShape{U}{wncy}{m}{n}{%
               <5>wncyr5%
               <6>wncyr6%
               <7>wncyr7%
               <8>wncyr8%
               <9>wncyr9%
               <10>wncyr10%
               <11>wncyr10%
               <12>wncyr6%
               <14>wncyr7%
               <17>wncyr8%
               <20>wncyr10%
               <25>wncyr10}{}
\DeclareMathAlphabet{\cyr}{U}{wncy}{m}{n}
\begin{document}

\newtheorem{thm}{Theorem}
\newtheorem*{thms}{Theorem}
\newtheorem{lem}{Lemma}[section]
\newtheorem{prop}[lem]{Proposition}
\newtheorem{alg}[lem]{Algorithm}
\newtheorem{cor}[lem]{Corollary}
\newtheorem{conj}[lem]{Conjecture}
\newtheorem{remark}{Remark}
\theoremstyle{definition}
\newtheorem{ex}{Example}

\title[New forms on $\Gamma_0(m)$]{Hecke algebras, new vectors and new forms on $\Gamma_0(m)$.}
\author{Ehud Moshe Baruch}
\address{Department of Mathematics\\
         Technion\\
         Haifa , 32000\\
         Israel}
\email{embaruch@math.technion.ac.il}
\thanks{}
\author{Soma Purkait}
\address{Faculty of Mathematics\\
         Kyushu University\\
         Japan}
\email{somapurkait@gmail.com}
\keywords{Hecke algebras, Hecke operators, New forms, New vectors}
\subjclass{Primary: 22E50; Secondary: 22E35,11S37}
\date{November 2014}
\begin{abstract}
We characterize the space of new forms for $\Gamma_0(m)$ as a common 
eigenspace of certain Hecke operators which depend on primes $p$ 
dividing the level $m$. To do that we find generators and relations for 
a $p$-adic Hecke algebra of functions on $K=\GL_2(\mathbb{Z}_p)$. 
We explicitly find the $n+1$ irreducible representations of $K$ which 
contain a vector of level $n$ including the unique representation that 
contains the ``new vector'' at level $n$.
After translating the $p$-adic Hecke operators
that we obtain into classical Hecke operators we obtain the results
about the new space mentioned above.
\end{abstract}
\maketitle
\section{Introduction}
The theory of Hecke operators and new forms of integer weight for
$\Gamma_0(m)$ was developed by Atkin and Lehner for the case of
trivial central character \cite{A-L} and by Atkin-Lehner-Li-Miyake 
for arbitrary central characters \cite{Miyake}. Atkin and Lehner define 
Hecke operators $T_q$ for primes $q$ not dividing $m$ and operators $U_p$ 
for primes $p$ dividing $m$. They define the new space of cusp forms on 
$\Gamma_0(m)$ as the space orthogonal under the Petersson inner product
to all the old forms on $\Gamma_0(m)$ which are forms that
come from lower levels $m'$ dividing $m$. They show that all
the Hecke operators stabilize the new space, that they
commute and are diagonalizable. Further, there is a common basis
of eigenforms where each eigenspace is one dimensional and
spanned by the primitive eigenforms, the ones whose
first Fourier coefficient is one. A basic tool in the discussion
is a certain involution on the whole space called the Atkin-Lehner involution.
Atkin and Lehner remark that the definition of the new space as
an orthogonal complement does not give enough information on this
space. In this paper we will show how to characterize the new
space using eigenvalues of Hecke operators. In particular when
the primes $p$ divides $m$ or $p^2$ divides $m$ but $p^3$ does
not divide $m$ we will use a certain product of the Atkin-Lehner
involution and the operator $U_p$. When $p^3$ divides $m$, the
information on the new space can not be obtained using the
operators considered by Atkin and Lehner and we will introduce
a family of Hecke operators which "capture" the various spaces
of old forms on $\Gamma_0(m)$. 

The theory of new forms was given a representation theoretic
interpretation by Casselman~\cite{Casselman} \cite{Casselman2} who showed that
every irreducible admissible representation of $\GL_2(F)$ where
$F$ is a $p$-adic field contains a unique new vector. Schmidt~\cite{Schmidt}
used the classification of irreducible admissible representations
of $\GL_2(F)$ to describe the new vectors in these representations.
In their remarkable work on the space of half integral weight modular forms 
Niwa~\cite{Niwa} and Kohnen~\cite{Kohnen} considered a certain
Hecke operator $Q$ which is a composition of classical Hecke operators.  
Kohnen defined the plus space to be a particular eigenspace of this operator. 
Loke and Savin \cite{L-S} interpreted Kohnen's definition in the context of a 
Hecke algeba for the double cover of
$\SL_2(\mathbb{Q}_2)$ and used this Hecke algebra to classify
the representations that
contain maximal level vectors fixed by a certain congruence subgroup.
Using similar methods we will study a Hecke algebra of functions
on $K=\GL_2(\mathbb{Z}_p)$ which are compactly supported and
bi-invariant with respect to an open compact subgroup
$K_0(p^n)$ which is defined below. We will find generators and
relations for this Hecke algebra and show that it is commutative.
We will study the finite dimensional representations of $K$
containing a $K_0(p^n)$ fixed vector. Casselman showed 
that there is a unique irreducible representation of
$K$ which contains a $K_0(p^n)$ fixed vector but
does not contain a $K_0(p^k)$ fixed vector for $k<n$. Such vectors 
are called new vectors. We will explicitly describe these new vectors 
and action of Hecke algebra on such vectors. Using our Hecke algebras 
we will construct classical Hecke operators that are needed to 
classify the new space. We view our paper as a connection between 
the theory of new vectors described by Casselman and the theory of 
newforms by Atkin and Lehner.

\section{The main results}\label{sec:mainresults}
Let $S_{2k}(\Gamma_0(m))$ be the space of cusp forms of weight $2k$ on
$\Gamma_0(m)$. The space of old forms $S_{2k}^{\text{old}}(\Gamma_0(m))$ 
is defined to be the space spanned by all the forms $f(lz)$ where 
$f\in S_{2k}(\Gamma_0(m_1))$ where 
$l,\ m_1\in \mathbb{N}$, with $l m_1|m$ and $m_1\not= m$. The space of
new forms $S_{2k}^{\text{new}}(\Gamma_0(m))$ is the space orthogonal 
to the space of old
forms under the Petersson inner product. 
Let $\GL_2(\mathbb{R})^{+}$ be the group of
$2\times 2$ real matrices with positive determinant and 
$\Hup$ be the upper half plane. 
For $g=\mat{a}{b}{c}{d} \in \GL_2(\mathbb{R})^{+}$ and $z\in \Hup$ define
\[
j(g,z)=det(g)^{-1/2}(cz+d),\]
and for functions $f$ on $\Hup$ define the slash operator $|_{2k}g$ by
\[f|_{2k}g = j(g,z)^{-2k}f\left(\frac{az+b}{cz+d}\right).\]
Let $p$ be a prime dividing $m$. Assume that $p^{n}|m$ and $p^{n+1}\nmid m$. 
(We will denote this by $p^n\|m$.) We define the following operators:
\begin{align*}
\tilde{U}_p(f)(z) &= p^{-k} \sum_{s=0}^{p-1} f((z+s)/p)\\
W_{p^n}(f)(z) &= f|_{2k}\mat{p^n\beta}{1}{m\gamma}{p^n}(z) \quad \text{where $p^{2n}\beta-m\gamma=p^n$}.
\end{align*}
Let $m=p^n m'$ with $p\nmid m'$ and $n \ge 2$. 
We fix $j$ such that $1\leq j \leq n-1$. Let
\[
L_{j}(f)=\sum_{s \in (\Z/ p^{n-j}\Z)^{*}} f|_{2k}A_s
\]
where $A_s \in \SL_2(\Z)$ is any matrix of the form 
$\mat{a_s}{b_s}{p^{j}m'}{p^{n-j}-sm'}$.
In this case we define for $1\leq r \leq n-1$ the operators
\[
S_{p^n,r} = I + \sum_{j=r}^{n-1}L_j
\]
We also define
\[
S_{p^n,r}'=W_{p^n}S_{p^n,r}W_{p^n}^{-1}.
\]
\begin{remark}
The operator $\tilde{U}_p$ is denoted by $U_p^{*}=p^{1-k}U_p$ in Atkin and Lehner
(See \cite{A-L} Lemma 14) where $U_p$ is the usual Hecke operator, sometime also
denoted as $T_p$ (\cite{Miyake}). The operator $W_{p^n}$ is the usual 
Atkin-Lehner involution $W_p$ defined in (\cite{A-L} (2.3)). 
The operators $S_{p^n,r}$ did not
appear in \cite{A-L}.
\end{remark}

Our main theorems characterize the space of new forms as a common eigenspace of 
above defined operators:
\begin{thm}\label{thm:sec2thm1}
Let $N$ be a square-free positive number. 
For any prime $p \mid N$, let $Q_p = \tilde{U}_p W_{p}$ 
and $Q_p' = W_{p}\tilde{U}_p$.
Then the space of new forms $S_{2k}^{\mathrm{new}}(\Gamma_0(N))$ is 
the intersection of
the $-1$ eigenspaces of $Q_{p}$ and $Q_{p}'$ as $p$ varies over the prime 
divisors of $N$. That is, $f \in S_{2k}^{\mathrm{new}}(\Gamma_0(N))$
if and only if $Q_{p}(f)=-f =Q_{p}'(f)$ for all primes $p\mid N$.
\end{thm}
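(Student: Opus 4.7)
My plan is to treat the two implications of the biconditional separately, the forward one being easy and the converse requiring the bulk of the work.

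For the forward direction it suffices to check that each primitive normalized newform $g \in S_{2k}^{\mathrm{new}}(\Gamma_0(N))$ satisfies $Q_p g = Q_p' g = -g$ for every $p \mid N$, since such newforms span the new space. Because $N$ is squarefree we have $p \| N$, and the classical Atkin--Lehner theorems give $W_p g = \epsilon_p g$ with $\epsilon_p \in \{\pm 1\}$ together with $U_p g = -\epsilon_p\, p^{k-1} g$. Combining these with the identity $\tilde{U}_p = p^{1-k} U_p$ from the Remark yields $\tilde{U}_p g = -\epsilon_p g$, so $Q_p g = \tilde{U}_p(W_p g) = -g$ and $Q_p' g = W_p(\tilde{U}_p g) = -g$.

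For the converse I would use the Atkin--Lehner decomposition
\[
S_{2k}(\Gamma_0(N)) \;=\; \bigoplus_{M \mid N}\, \bigoplus_{g} \, \mathrm{Span}\bigl\{g(lz) : l \mid N/M\bigr\},
\]
where the inner sum runs over primitive newforms $g$ of $\Gamma_0(M)$. First I would verify that each summand is stable under $\tilde{U}_p$ and $W_p$ for every prime $p \mid N$, so that any common $-1$ eigenvector of the operators $Q_p, Q_p'$ decomposes into components each again lying in the same simultaneous $-1$ eigenspace. This reduces the problem to showing that if $f \in \mathrm{Span}\{g(lz) : l \mid N/M\}$ satisfies all the eigenvalue equations, then $M = N$. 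Assuming instead $M \subsetneq N$, I would pick a prime $p$ dividing $N/M$ (so $p \nmid M$) and refine the span by the $p$-part of $l$ into $Q_p, Q_p'$-stable two-dimensional subspaces $\mathrm{Span}\{g(l_0 z),\, g(p l_0 z)\}$ with $l_0 \mid (N/M)/p$.

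The heart of the argument, and the main technical obstacle, is the two-dimensional calculation on each such piece. The action of $\tilde{U}_p$ is easy from Fourier expansions: one has $\tilde{U}_p\, g(p l_0 z) = p^{1-k} g(l_0 z)$, and since $\tilde{U}_p$ commutes with $z \mapsto l_0 z$ when $p \nmid l_0$, one obtains $\tilde{U}_p\, g(l_0 z) = a_p\, g(l_0 z) - p^k\, g(p l_0 z)$, where $a_p$ is the Hecke eigenvalue of $g$ at $p$ (defined because $p \nmid M$). The hard part is the explicit action of $W_p = W_p^{(N)}$ on these two forms, because the defining matrix depends on the ambient level $N$ rather than on $M$. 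I would carry this out by factoring the Atkin--Lehner matrix, first reducing to level $Mp$ (where classical formulas express $W_p^{(Mp)}$ acting on $g$ and on $g(pz)$ in terms of $g$ itself), and then tracking the effect of the remaining shift by $N/(Mp)$. With the resulting $2\times 2$ matrices of $Q_p$ and $Q_p'$ in hand, a direct eigenvector computation should show that no nonzero $f$ in the two-dimensional piece satisfies both $Q_p f = -f$ and $Q_p' f = -f$ simultaneously, producing the required contradiction and forcing $M = N$.
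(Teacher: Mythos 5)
Your forward direction matches the paper's (Lemma~\ref{lem:lem1pM}): primitivity plus the Atkin--Lehner relations $W_pg=\epsilon_pg$, $U_pg=-\epsilon_pp^{k-1}g$ give $Q_pg=Q_p'g=-g$. The converse, however, has a genuine gap at precisely the step you call the heart of the argument. Carrying out your $2\times2$ computation on $\Span\{g(l_0z),\,g(pl_0z)\}$ (with $g$ a normalized newform of level $M$ and $p\nmid Ml_0$), one finds, in the ordered basis $(g(l_0z),\,g(pl_0z))$, that $Q_p$ acts by $\mat{p}{p^{1-2k}a_p}{0}{-1}$ and $Q_p'$ by $\mat{-1}{0}{pa_p}{p}$. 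Each operator therefore \emph{does} have a one-dimensional $-1$ eigenspace inside the piece, and the two eigenlines coincide if and only if $a_p^2=p^{2k-2}(p+1)^2$. So the ``direct eigenvector computation'' does not by itself yield a contradiction: you are left having to exclude $a_p=\pm p^{k-1}(p+1)$. That exclusion is true but not free --- it follows from Deligne's bound $|a_p|\le 2p^{k-1/2}<p^{k-1}(p+1)$, and the trivial Hecke bound $a_n=O(n^k)$ does not suffice; your plan neither invokes such a bound nor supplies a substitute. (A secondary normalization slip: since $\tilde{U}_p=p^{1-k}U_p$, the correct formula is $\tilde{U}_pg(l_0z)=p^{1-k}a_pg(l_0z)-p^kg(pl_0z)$, not $a_pg(l_0z)-p^kg(pl_0z)$; this changes the entries but not the shape of the dichotomy.)

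The paper sidesteps this entirely: it proves that $Q_p$ and $Q_p'$ are self-adjoint for the Petersson product (Proposition~\ref{prop:prop2pM}), shows that $S_{2k}(\Gamma_0(N/p))$ lies in the $p$-eigenspace of $Q_p$ and $V(p)S_{2k}(\Gamma_0(N/p))$ in the $p$-eigenspace of $Q_p'$, and concludes that any common $-1$ eigenvector is orthogonal to the entire old space. If you want to rescue your route without appealing to Deligne, the cheapest repair is to import that self-adjointness: your putative common eigenvector $xg(l_0z)+yg(pl_0z)$ would be orthogonal to $g(l_0z)$ (which lies in the $p$-eigenspace of the self-adjoint $Q_p$) and to $g(pl_0z)$ (in the $p$-eigenspace of $Q_p'$), hence to itself, hence zero --- but at that point you have reproduced the paper's argument, and the $2\times2$ matrices together with the finer newform-by-newform decomposition become unnecessary overhead.
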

\begin{thm}\label{thm:sec2thm2}
Let $N=M_1^2M$ where $M_1$ and $M$ are square free and coprime. 
For any prime $p$ 
dividing $M_1$, let $Q_{p^2} = (\tilde{U}_p)^2 W_{p^2}$ 
and $Q_{p^2}' = W_{p^2}(\tilde{U}_p)^2$
Then $f \in S_{2k}^{\mathrm{new}}(\Gamma_0(N))$
if and only if $Q_{p}(f)=-f =Q_{p}'(f)$ for all primes $p$ dividing $M$ and 
$Q_{p^2}(f)=0 = Q_{p^2}'(f)$ for all primes $p$ dividing $M_1$.
\end{thm}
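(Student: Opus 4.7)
The plan is to translate the global statement into local conditions at each prime dividing $N$. Via the decomposition of $S_{2k}(\Gamma_0(N))$ into a direct sum over irreducible cuspidal automorphic representations $\pi = \otimes_v \pi_v$ of trivial central character and the appropriate weight, the $K_0(N)$-fixed subspace of $\pi$ factors as $\bigotimes_{p\mid N} \pi_p^{K_0(p^{n_p})}$ where $p^{n_p}\|N$. By Atkin-Lehner-Li-Miyake theory combined with Casselman's description of new vectors, a form $f \in S_{2k}^{\mathrm{new}}(\Gamma_0(N))$ corresponds exactly to the condition that at every $p \mid N$ the conductor exponent $n(\pi_p)$ equals $n_p$, equivalently, the local component of $f$ at $p$ lies on the one-dimensional new vector line of $\pi_p^{K_0(p^{n_p})}$. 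Thus Theorem~\ref{thm:sec2thm2} reduces to checking, prime by prime, that the operator conditions in the statement match this local newness condition.

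For primes $p \mid M$ the local exponent is $n_p = 1$ and the required equivalence, ``$f$ is locally new at $p$ if and only if $Q_p f = -f = Q_p' f$,'' is exactly the content of Theorem~\ref{thm:sec2thm1}. The remaining work is for primes $p \mid M_1$, where $n_p = 2$. Working inside a local representation $\pi_p$ I invoke the paper's earlier classification: there are $n+1=3$ irreducible representations of $K=\GL_2(\Z_p)$ containing a $K_0(p^2)$-fixed vector, corresponding to $n(\pi_p) \in \{0,1,2\}$, with $\dim \pi_p^{K_0(p^2)} = 3-n(\pi_p)$. The goal is to show that the joint kernel of $Q_{p^2} = \tilde{U}_p^2 W_{p^2}$ and $Q_{p^2}' = W_{p^2} \tilde{U}_p^2$ acting on $\pi_p^{K_0(p^2)}$ is the new vector subspace, i.e. all of $\pi_p^{K_0(p^2)}$ when $n(\pi_p)=2$ and trivial otherwise.

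For $n(\pi_p) = 2$ the space $\pi_p^{K_0(p^2)}$ is one-dimensional, spanned by the new vector $v$. The operator $\tilde{U}_p$ preserves $\pi_p^{K_0(p^2)}$ (it is $K_0(p^2)$-biinvariant) and thus acts on $v$ by a scalar; since $\pi_p$ has trivial central character and conductor exponent $\geq 2$, this scalar vanishes, giving $\tilde{U}_p^2 v = 0$. Because $W_{p^2}$ normalizes $K_0(p^2)$ and preserves the new vector line (acting by a nonzero scalar), both $Q_{p^2} v$ and $Q_{p^2}' v$ vanish. For $n(\pi_p) \in \{0,1\}$ every $K_0(p^2)$-fixed vector is an ``old'' vector obtained from shifts of lower-level new vectors. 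Using the generators and relations for the $p$-adic Hecke algebra established earlier in the paper, one writes the matrices of $\tilde{U}_p$ and $W_{p^2}$ on an explicit basis and verifies directly that $Q_{p^2}$ and $Q_{p^2}'$ have no common nonzero kernel on these spaces.

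The main obstacle is this last explicit computation in the cases $n(\pi_p) \in \{0,1\}$: one needs a basis of the two- or three-dimensional space $\pi_p^{K_0(p^2)}$ on which both $\tilde{U}_p$ and $W_{p^2}$ are tractable, together with the commutation relations between them, in order to rule out a common kernel vector among the old vectors. Once the local equivalence at $p \mid M_1$ is established and combined with the local statement at $p \mid M$ from Theorem~\ref{thm:sec2thm1}, assembling the prime-by-prime conditions gives the global characterization of the new space claimed in Theorem~\ref{thm:sec2thm2}.
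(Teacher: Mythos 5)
There is a genuine gap, and you have in fact flagged it yourself: the entire content of the theorem at a prime $p\mid M_1$ is that a form annihilated by both $Q_{p^2}$ and $Q_{p^2}'$ has no component along the old vectors, and your proposal reduces this to showing that $Q_{p^2}$ and $Q_{p^2}'$ have no common nonzero kernel on the two- and three-dimensional spaces of old $K_0(p^2)$-fixed vectors --- and then leaves that verification undone (``one writes the matrices \dots and verifies directly,'' ``the main obstacle is this last explicit computation''). This is not a routine check one can wave at: on each of these spaces $\tilde{U}_p^2$ is singular (its kernel is one-dimensional, e.g.\ spanned by $f - \lambda_p f(pz)$ in the $n(\pi_p)=1$ case), so $\ker Q_{p^2}=W_{p^2}^{-1}(\ker \tilde{U}_p^2)$ and $\ker Q_{p^2}'=\ker \tilde{U}_p^2$ are each nonzero, and the theorem is true only because these two lines are distinct --- which requires actually computing how $W_{p^2}$ moves $\ker \tilde{U}_p^2$ and using, e.g., that the $U_p$-eigenvalue of a level-$p$ primitive form is $\mp p^{k-1}$ rather than $\pm p^{k}$. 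Without that computation the proof does not establish the hard direction. (The local-to-global reduction via multiplicity one and the tensor factorization of $\pi^{K_f(N)}$ is also asserted rather than set up, but that part is standard.)

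It is worth noting that the paper's own proof is organized so as to avoid exactly this kernel computation. Instead of intersecting kernels, it proves that $Q_{p^2}$ and $Q_{p^2}'$ are self-adjoint for the Petersson inner product (Proposition~\ref{prop:prop5p^nM}), shows that $Q_{p^2}$ stabilizes $S_{2k}(\Gamma_0(pM))$ acting with the nonzero eigenvalues $p^2$ and $-p$ there (via $Q_{p^2}=pQ_p$ on that subspace, Corollary~\ref{cor:trans3}), and shows the analogous statement for $Q_{p^2}'$ on $V(p)S_{2k}(\Gamma_0(pM))$ (Lemma~\ref{lem:lem5p^2M}); then $Q_{p^2}(f)=0=Q_{p^2}'(f)$ forces $f$ to be orthogonal to both old subspaces because eigenspaces of a self-adjoint operator for distinct eigenvalues are orthogonal. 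If you want to complete your route, you must either carry out the $2\times 2$ and $3\times 3$ matrix computations for $\tilde{U}_p$ and $W_{p^2}$ on the old vectors in each of the cases $n(\pi_p)=0,1$, or import the self-adjointness argument, at which point you are essentially reproducing the paper's proof.
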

\noindent {\bf Theorem 2'.}
Let $N$ be as in Theorem~\ref{thm:sec2thm2}. Then 
$f \in S_{2k}^{\mathrm{new}}(\Gamma_0(N))$
if and only if $Q_{p}(f)=-f =Q_{p}'(f)$ for all primes $p$ dividing $M$ and 
$S_{p^2,1}(f)=0 = S_{p^{2},1}'(f)$ for all primes $p$ dividing $M_1$.
\begin{thm}\label{thm:sec2thm4}
Let $N$ be a positive integer. 
Then the space of new forms $S_{2k}^{\mathrm{new}}(\Gamma_0(N))$ is the 
intersection of the $-1$ eigenspaces of 
$Q_{p}$ and $Q_{p}'$ where $p$ varies over the primes 
such that $p\|N$ and the $0$ eigenspaces of $S_{p^{\gamma},{\gamma-1}}$ and 
$S_{p^{\gamma},{\gamma-1}}'$ for primes $p$  
such that $p^\gamma \|N$ with $\gamma \ge 2$. That is, $f \in S_{2k}^{\mathrm{new}}(\Gamma_0(N))$
if and only if $Q_{p}(f)=-f =Q_{p}'(f)$ for all primes $p$ such that 
$p\| N$ and 
$S_{p^{\gamma},{\gamma-1}}(f) = 0 =S_{p^{\gamma},{\gamma-1}}'(f)$ 
for all primes $p$ such that $p^\gamma \| N$ for $\gamma \ge 2$.
\end{thm}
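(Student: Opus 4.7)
I would prove Theorem~\ref{thm:sec2thm4} by reducing to a local question at each prime $p$ dividing $N$, in the same spirit as Theorems~\ref{thm:sec2thm1} and~\ref{thm:sec2thm2}. Passing to the adelic picture, a cusp form $f \in S_{2k}(\Gamma_0(N))$ is new if and only if, for every prime $p \mid N$ with $p^\gamma \| N$, the local representation $\pi_p$ attached to $f$ contains the ``newest'' vector at level $p^\gamma$: a vector fixed by $K_0(p^\gamma)$ but by no $K_0(p^k)$ with $k < \gamma$. Thus the theorem follows once one verifies, at each prime $p \mid N$ separately, that the listed operator condition singles out precisely this property; the conditions at different primes are independent since they lie in different local Hecke algebras and hence commute.

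\textbf{Reduction to the $p$-adic Hecke algebra action.} For each prime $p$ with $p^\gamma \| N$, the earlier results of the paper provide $\gamma + 1$ irreducible $K$-representations $V_0, V_1, \dots, V_\gamma$ containing $K_0(p^\gamma)$-fixed vectors, indexed by their minimal level, with $V_\gamma$ the Casselman new representation. I would translate the classical operators $S_{p^\gamma, \gamma-1}$ and $S'_{p^\gamma, \gamma-1}$ (respectively $Q_p$ and $Q_p'$ in the $\gamma = 1$ case) into elements of the local Hecke algebra $\mathcal{H}(K, K_0(p^\gamma))$ by unraveling the slash action on each coset representative $A_s$ and on $W_{p^\gamma}$. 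The case $\gamma = 1$ is precisely Theorem~\ref{thm:sec2thm1}, so the new content lies in the case $\gamma \ge 2$.

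\textbf{Main technical step and obstacle.} The core step is to compute, for each $\gamma \ge 2$ and each $k \in \{0, 1, \dots, \gamma\}$, the action of the Hecke element corresponding to $S_{p^\gamma, \gamma-1} = I + L_{\gamma-1}$ on $V_k$. The claim to establish is that this element acts by a nonzero scalar on $V_k$ for $k < \gamma$ (thereby detecting each ``old'' stratum) and acts by $0$ on the new representation $V_\gamma$; the analogous statement for $S'_{p^\gamma, \gamma-1}$ then follows by conjugation with $W_{p^\gamma}$. The vanishing on the new representation should come from a direct computation that on the new vector $v_\gamma$ one has $\sum_{s \in (\Z/p\Z)^*} v_\gamma|_{2k} A_s = -v_\gamma$, which expresses the fact that the $A_s$ translate $v_\gamma$ into $p - 1$ distinct nontrivial $K_0(p^\gamma)$-cosets whose sum cancels $v_\gamma$ by the explicit generator--relation presentation of $\mathcal{H}(K, K_0(p^\gamma))$. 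On each $V_k$ with $k < \gamma$, the same computation gives instead an eigenvalue different from $-1$, so the identity piece is not cancelled. Combining these local characterizations at every $p \mid N$ yields the stated description of $S_{2k}^{\mathrm{new}}(\Gamma_0(N))$. The main obstacle is the explicit Hecke-algebra verification of these eigenvalues on each of the $\gamma + 1$ representations; this is where the commutativity of the algebra and the generator--relation data established earlier in the paper become essential, together with a careful bookkeeping of the Iwasawa decomposition of the matrices $A_s$.
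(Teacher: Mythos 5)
Your starting point -- translate $S_{p^\gamma,\gamma-1}$ into the local Hecke algebra and read off its eigenvalues on the $\gamma+1$ irreducible $K$-types -- is consistent with what the paper establishes in Proposition~\ref{prop:represen2} ($\mathcal{Y}_{\gamma-1}$ acts by $p$ on every constituent of $I(\gamma)$ except the Casselman representation $\sigma(\gamma)$, where it acts by $0$). But there is a genuine gap in your local--global reduction, and it begins with a quantifier error. Every local component $\pi_p$ of conductor $p^c$ with $c\le\gamma$ contains the $K$-type $\sigma(\gamma)$, hence contains ``a vector fixed by $K_0(p^\gamma)$ but by no $K_0(p^k)$ with $k<\gamma$''; what characterizes newness is that $\pi_p$ contains \emph{no} nonzero $K_0(p^{\gamma-1})$-fixed vector at all, i.e.\ $c=\gamma$. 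As a result the kernel of $S_{p^\gamma,\gamma-1}$ alone contains nonzero old forms: for $g$ new of level prime to $p$, the span of $g, V(p)g,\dots,V(p^\gamma)g$ meets the orthogonal complement of the $p$-eigenspace $S_{2k}(\Gamma_0(N/p))$ nontrivially, and that intersection is killed by $S_{p^\gamma,\gamma-1}$. So the primed operators, which you dispose of in one clause ``by conjugation with $W_{p^\gamma}$,'' are not a formality: the whole content of the converse direction is that the \emph{joint} kernel of $S_{p^\gamma,\gamma-1}$ and $S'_{p^\gamma,\gamma-1}$ meets the old space trivially, and your proposal never addresses this.

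The second missing ingredient is the mechanism that converts eigenvalue data into orthogonality to the old space: a general $f$ is not an eigenvector of these operators, so ``nonzero scalar on each old $K$-type'' proves nothing about $f$ unless the eigenspaces are orthogonal. The paper supplies exactly this by proving that $Q_p$, $Q_p'$, $S_{p^\gamma,\gamma-1}$, $S'_{p^\gamma,\gamma-1}$ are self-adjoint for the Petersson product (Propositions~\ref{prop:prop2pM} and~\ref{prop:prop4p^nM}), identifying the $p$-eigenspaces of the unprimed and primed operators with $S_{2k}(\Gamma_0(N/p))$ and $V(p)S_{2k}(\Gamma_0(N/p))$ respectively (Proposition~\ref{prop:prop1pM}, Lemma~\ref{lem:lem4pm}, Corollary~\ref{cor:corp^nM}), and observing that these subspaces span the old space; orthogonality of distinct eigenspaces then finishes the converse. (For the forward direction the paper also does not argue on $K$-types: it shows $S_{p^\gamma,\gamma-1}(f)$ is a lower-level form sharing all $T_q$-eigenvalues with the primitive form $f$ and hence vanishes by Atkin--Lehner's Lemma~23.) Your route could in principle be completed by invoking the full decomposition of $A_{2k}(N)$ into automorphic representations and analyzing the joint kernel of $S$ and $S'$ on each $\pi_p^{K_0(p^\gamma)}$, but that analysis -- in particular, showing the $\sigma(\gamma)$-line in an old $\pi_p$ is not Atkin--Lehner stable -- is precisely the hard step, and it is absent from the proposal.
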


Let $q=e^{2\pi i z}$ and
$f(z) = \sum_{n=1}^{\infty}a_nq^n \in S_{2k}(\Gamma_0(m))$. Let $p$ be an odd prime.
Define
\[R_p (f)(z) = \sum_{n=1}^{\infty} \kro{n}{p}a_nq^n, 
\qquad R_{\chi} (f)(z) = \sum_{n=1}^{\infty} \kro{-1}{n}a_nq^n.\]
By \cite[Lemma 33]{A-L}, $R_p$ and $R_{\chi}$ are 
operators on $S_{2k}(\Gamma_0(m))$ provided that $p^2 \mid m$ and $16 \mid m$ respectively.
\begin{thm}\label{thm:sec2thm5}
Let $N=2^{\beta}M_1M_2$ where $M_1M_2$ is odd such that 
$M_1$ is square free and any prime divisor of $M_2$ divides 
it with a power at least $2$. Let $\beta \ge 4$. 
Then $ f \in S_{2k}^{\mathrm{new}}(\Gamma_0(N))$ if and only if 
$Q_p(f) = -f = Q_p'(f)$ for all primes $p$ dividing $M_1$, 
$(R_{\chi})^2(f) = f$ and $(R_{p})^2(f) = f$ for all primes $p$ dividing $M_2$, 
and $S_{p^\gamma,\gamma-1}(f)=0$ for all primes $p$ such that 
$p^\gamma \| 2^\beta M_2$.
\end{thm}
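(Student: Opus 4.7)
The plan is to deduce Theorem~\ref{thm:sec2thm5} from Theorem~\ref{thm:sec2thm4} by showing that at each prime where the two statements differ, the primed-$S$ condition of Theorem~\ref{thm:sec2thm4} can be traded for the corresponding twist condition of Theorem~\ref{thm:sec2thm5}. At the outset I would verify the standing hypotheses: $\beta \geq 4$ forces $16 \mid N$, so that $R_\chi$ acts on $S_{2k}(\Gamma_0(N))$, and each $p \mid M_2$ satisfies $p^\gamma \| N$ with $\gamma \geq 2$, giving $p^2 \mid N$ and hence $R_p$ defined by \cite[Lemma 33]{A-L}. For primes $p \mid M_1$ (exactly those with $p\|N$) the conditions in the two theorems already coincide, so only the primes dividing $2M_2$ require work.

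For the forward implication, since $S_{2k}^{\mathrm{new}}(\Gamma_0(N))$ is spanned by primitive newforms, it suffices to verify each twist condition on such eigenforms. For a primitive newform $f$ with $p^\gamma \| N$, $\gamma \geq 2$, the $U_p$-eigenvalue is zero, so $a_p(f)=0$; the trivial-character Hecke recursion then gives $a_{p^k}(f)=0$ for all $k \geq 1$, and multiplicativity yields $a_n(f)=0$ whenever $p \mid n$. For odd $p \mid M_2$ this reads as $(R_p)^2 f = f$, and for $p = 2$ it reads as $(R_\chi)^2 f = f$ since $\kro{-1}{n}^2$ is the indicator of odd $n$. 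The other conditions listed in Theorem~\ref{thm:sec2thm5} are a subset of those granted by Theorem~\ref{thm:sec2thm4}, so the forward direction is complete.

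For the reverse implication, the essential content is the claim that a form $f$ satisfying the hypotheses of Theorem~\ref{thm:sec2thm5} automatically satisfies $S_{p^\gamma,\gamma-1}'(f)=0$ for each $p \mid 2M_2$, after which Theorem~\ref{thm:sec2thm4} finishes the argument. I would approach this through the identity $S_{p^\gamma,\gamma-1}' = W_{p^\gamma} S_{p^\gamma,\gamma-1} W_{p^\gamma}^{-1}$: setting $g = W_{p^\gamma}^{-1} f$, the task reduces to establishing $S_{p^\gamma,\gamma-1}(g)=0$, which should follow from $S_{p^\gamma,\gamma-1}(f)=0$ once one exploits the Fourier-support information from the twist condition (namely, that $a_n(f)=0$ for $p \mid n$, respectively for even $n$) together with the explicit action of the Atkin-Lehner involution $W_{p^\gamma}$ on Fourier expansions.

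The main obstacle is making this last equivalence rigorous. I expect the cleanest route to be through the $p$-adic local Hecke algebra developed earlier in the paper: the twist condition $(R_p)^2 f = f$ (resp.\ $(R_\chi)^2 f = f$) should correspond to $f$ lying in a distinguished sum of irreducible $\GL_2(\mathbb{Z}_p)$-representations identified in the paper, and on that subspace the algebraic relations between $W_{p^\gamma}$, $S_{p^\gamma,\gamma-1}$ and $S_{p^\gamma,\gamma-1}'$ should render the implication transparent. A purely classical verification, expanding every operator as a sum of slash actions and computing directly on Fourier expansions, is also conceivable but will be significantly more tedious. The case $p=2$ runs in parallel with $R_\chi$ in the role of $R_p$, and the hypothesis $\beta \geq 4$ is precisely what makes $R_\chi$ available on $S_{2k}(\Gamma_0(N))$.
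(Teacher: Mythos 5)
Your forward direction is correct and is essentially the paper's: the twist conditions are verified on primitive newforms via $a_p=0$ (this is exactly the first halves of Lemmas~\ref{lem:lem8p^nM} and~\ref{lem:lem9p^nM}), and the remaining conditions are inherited from Theorem~\ref{thm:sec2thm4}. The problem is the reverse direction. Your plan hinges on the unproved claim that the hypotheses force $S_{p^\gamma,\gamma-1}'(f)=0$ for $p\mid 2M_2$, and the route you sketch for it --- writing $S_{p^\gamma,\gamma-1}'=W_{p^\gamma}S_{p^\gamma,\gamma-1}W_{p^\gamma}^{-1}$ and computing $S_{p^\gamma,\gamma-1}(W_{p^\gamma}^{-1}f)$ from ``the explicit action of $W_{p^\gamma}$ on Fourier expansions'' --- does not go through: the Atkin--Lehner involution has no usable description on the Fourier coefficients of a general (non-eigen) form, so the support condition $a_n(f)=0$ for $p\mid n$ gives essentially no control over $W_{p^\gamma}^{-1}f$. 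The claim itself is true only because $f$ turns out to be new (and newforms satisfy $S'(f)=0$ by Corollary~\ref{cor:corp^nM}), so trying to establish it before knowing $f$ is new is circular; the representation-theoretic alternative you float is likewise not developed.

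The paper never proves $S_{p^\gamma,\gamma-1}'(f)=0$. It uses the twist operators directly as substitutes for $S'$ in the orthogonality argument: by the second halves of Lemmas~\ref{lem:lem8p^nM} and~\ref{lem:lem9p^nM}, the space $V(p)S_{2k}(\Gamma_0(N/p))$ lies in the $0$-eigenspace of the self-adjoint operator $R_p^2$ (resp.\ $R_\chi^2$ for $p=2$), because $V(p)$ pushes the Fourier support into multiples of $p$, which the twist annihilates. The hypothesis $(R_p)^2f=f$ places $f$ in the $1$-eigenspace, hence $f\perp V(p)S_{2k}(\Gamma_0(N/p))$. Combined with $f\perp S_{2k}(\Gamma_0(N/p))$ --- which follows from $S_{p^\gamma,\gamma-1}(f)=0$, the self-adjointness of $S_{p^\gamma,\gamma-1}$ (Proposition~\ref{prop:prop4p^nM}) and the identification of $S_{2k}(\Gamma_0(N/p))$ as its $p$-eigenspace (Corollary~\ref{cor:corp^nM}) --- and with the corresponding orthogonality at the primes dividing $M_1$, one concludes that $f$ is orthogonal to every summand of the old space, exactly as in the proof of Theorem~\ref{thm:p^nM}. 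If you replace your ``deduce $S'(f)=0$ and quote Theorem~\ref{thm:sec2thm4}'' step by this direct orthogonality argument, the proof closes.
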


\section{ $p$-adic Hecke Algebras and the representations of $K$.}
In this section we will find generators and relations for a Hecke algebra of functions
on $K=\GL_2(\Z_p)$ which are bi-invariant with respect to $K_0(p^n)$. We will use these results to
classify smooth irreducible finite dimensional representations of $K$ which have $K_0(p^n)$ fixed vectors.

Denote by $G$ the group $\GL_2(\Q_p)$. Let $K_0(p^n)$ be the subgroup of $K$ defined by
\[K_0(p^n) =\left\{ \mat{a}{b}{c}{d} \in K\ :\ c \in p^n\Z_p \right\}. \]
The subgroup $K_0(p)$ denotes the usual Iwahori subgroup. In this section
we shall consider the Hecke algebra of $G$ with respect to $K_0(p^n)$.

It is well known that the space $C_c^{\infty}(G)$, the space of locally constant, compactly supported
complex-valued functions on $G$, forms a $\C$-algebra under convolution which, for any
$f_1, f_2 \in C_c^{\infty}(G)$, is defined by
\[ f_1*f_2(h) = \int_G f_1(g)f_2(g^{-1}h) dg = \int_G f_1(hg)f_2(g^{-1}) dg, \]
where $dg$ is the Haar measure on $G$ such that the measure of $K_0(p^n)$ is one.
The Hecke algebra corresponding to $K_0(p^n)$,
denoted by $H(G//K_0(p^n))$, is
the subalgebra of $C_c^{\infty}(G)$ consisting of $K_0(p^n)$ bi-invariant functions:
\[H(G//K_0(p^n)) = \{ f \in C_c^{\infty}(G) : f(kgk')=f(g) \text{ for } g \in G,\ k,\ k' \in K_0(p^n)\}.\]
Let $X_g$ denotes the characteristic function of the double coset $K_0(p^n)gK_0(p^n)$. Then
$H(G//K_0(p^n))$ as a $\C$-vector space is spanned by $X_g$ as $g$ varies over the double coset representatives of
$G$ modulo $K_0(p^n)$.

Let $\mu(K_0(p^n)gK_0(p^n))$ denotes the number of disjoint left 
(right) $K_0(p^n)$ cosets in the
double coset $K_0(p^n)gK_0(p^n)$. Then the following lemmas are well known \cite[Corollary 1.1]{Howe}.
\begin{lem} \label{lem:rel1}
If $\mu(K_0(p^n)gK_0(p^n))\mu(K_0(p^n)hK_0(p^n)) = \mu(K_0(p^n)ghK_0(p^n))$ then $X_g*X_h = X_{gh}$.
\end{lem}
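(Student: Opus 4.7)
The plan is to compute $X_g * X_h$ by decomposing the double cosets into left cosets, then use a measure-counting argument that pins down its values on each double coset in its support.

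First I would decompose $K_0(p^n)gK_0(p^n) = \bigsqcup_{i=1}^{a} y_i K_0(p^n)$, where $a = \mu(K_0(p^n)gK_0(p^n))$, and similarly for $h$. Since $K_0(p^n)$ has measure one and $X_h$ is left $K_0(p^n)$-invariant, the integral defining the convolution collapses to the finite sum
\[
(X_g * X_h)(x) = \sum_{i=1}^{a} X_h(y_i^{-1}x),
\]
so $(X_g * X_h)(x)$ counts the number of indices $i$ with $x \in y_i K_0(p^n) h K_0(p^n)$. In particular $X_g * X_h$ takes nonnegative integer values, is bi-$K_0(p^n)$-invariant, and is supported on the set $K_0(p^n)gK_0(p^n)\cdot K_0(p^n)hK_0(p^n)$, which is a finite disjoint union of double cosets $D$.

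Next I would compute the total integral in two ways. By Fubini,
\[
\int_G (X_g * X_h)(x)\,dx = \mu(K_0(p^n)gK_0(p^n))\,\mu(K_0(p^n)hK_0(p^n)).
\]
On the other hand, since $X_g * X_h$ is constant with some value $c_D \in \Z_{\geq 0}$ on each double coset $D$ in its support,
\[
\int_G (X_g * X_h)(x)\,dx = \sum_D c_D\, \mu(D),
\]
where the sum runs over the double cosets appearing in the product set.

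The decisive step is to check that the value on the double coset $D_0 = K_0(p^n)ghK_0(p^n)$ is at least $1$: one of the left coset representatives $y_i$ may be taken to be $g$ itself, and then $y_i^{-1}(gh) = h \in K_0(p^n)hK_0(p^n)$, so $c_{D_0} \geq 1$. Combining this with the hypothesis $\mu(K_0(p^n)gK_0(p^n))\mu(K_0(p^n)hK_0(p^n)) = \mu(D_0)$ yields
\[
\mu(D_0) \leq c_{D_0}\,\mu(D_0) \leq \sum_D c_D\,\mu(D) = \mu(D_0),
\]
which forces $c_{D_0} = 1$ and $c_D = 0$ for every other double coset $D$ in the support. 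Therefore $X_g * X_h = X_{gh}$. No step here poses a serious obstacle; the only point requiring any care is the observation that $c_{D_0} \geq 1$, which is what allows the equality in the measure identity to be exploited rather than merely the inequality.
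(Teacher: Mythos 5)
Your proof is correct and complete. Note that the paper itself gives no proof of this lemma -- it simply records it as well known and cites \cite[Corollary 1.1]{Howe} -- so there is no argument in the text to compare against; your measure-counting argument is the standard one. The computation of $(X_g*X_h)(x)$ as the number of indices $i$ with $x\in y_iK_0(p^n)hK_0(p^n)$, the evaluation of the total integral by Fubini and translation-invariance of Haar measure, and the key observation that the value at $gh$ is at least $1$ (taking $g$ itself as one of the left-coset representatives) are all sound, and together they correctly force $c_{D_0}=1$ and $c_D=0$ elsewhere under the stated hypothesis on the coset counts.
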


\begin{lem} \label{lem:rel2}
Let $f_1,\ f_2 \in H(G//K_0(p^n)$ such that $f_1$ is supported on
$K_{0}(p^n)xK_{0}(p^n)=\bigcup_{i=1}^{m}\alpha_i K_0(p^n)$ and 
 $f_2$ is supported on $K_{0}(p^n)yK_{0}(p^n)=\bigcup_{j=1}^{n}\beta_j K_0(p^n)$. Then
\[
f_1*f_2(h)=
\sum_{i=1}^{m}f_1(\alpha_i)f_2(\alpha_{i}^{-1}h)\]
where the nonzero summands are precisely for those $i$ for which there exist a $j$ such that
$h\in \alpha_i \beta_j K_0(p^n)$.
\end{lem}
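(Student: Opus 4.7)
The statement is a standard unfolding of the convolution integral using the decomposition of the double coset into left cosets, so the plan is mostly a careful bookkeeping exercise rather than anything deep.

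First I would start from the defining formula
\[
f_1 * f_2(h) = \int_G f_1(g) f_2(g^{-1}h)\, dg.
\]
Since $f_1$ is supported on the double coset $K_0(p^n)xK_0(p^n) = \bigsqcup_{i=1}^{m} \alpha_i K_0(p^n)$ (a disjoint union of left $K_0(p^n)$-cosets), I would split the integral as $\sum_{i=1}^{m} \int_{\alpha_i K_0(p^n)} f_1(g) f_2(g^{-1}h)\, dg$. On each piece I would substitute $g = \alpha_i k$ with $k \in K_0(p^n)$. The right $K_0(p^n)$-invariance of $f_1$ gives $f_1(\alpha_i k) = f_1(\alpha_i)$, a constant that comes out of the integral. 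The left $K_0(p^n)$-invariance of $f_2$ gives $f_2(k^{-1} \alpha_i^{-1} h) = f_2(\alpha_i^{-1} h)$, which is also constant on the coset.

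Next I would use the normalization of the Haar measure, which assigns $K_0(p^n)$ (and hence each translate $\alpha_i K_0(p^n)$) measure one. This collapses each summand to $f_1(\alpha_i) f_2(\alpha_i^{-1} h)$, yielding
\[
f_1 * f_2(h) = \sum_{i=1}^{m} f_1(\alpha_i) f_2(\alpha_i^{-1} h),
\]
which is the claimed formula.

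For the final sentence, I would note that $f_1(\alpha_i) \ne 0$ automatically since $\alpha_i$ lies in the support of $f_1$, so the vanishing of the $i$-th summand is controlled entirely by $f_2(\alpha_i^{-1} h)$. That factor is nonzero exactly when $\alpha_i^{-1} h$ lies in $\mathrm{supp}(f_2) = \bigsqcup_{j=1}^{n} \beta_j K_0(p^n)$, i.e.\ when there exists $j$ with $h \in \alpha_i \beta_j K_0(p^n)$. The only minor obstacle is keeping the left- versus right-invariance straight when applying each function's bi-invariance to justify pulling constants out of the integrals; otherwise the argument is essentially formal.
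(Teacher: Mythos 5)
Your proof is correct and is exactly the standard unfolding argument; the paper itself omits a proof of this lemma (it simply cites Howe, Corollary~1.1), and your computation --- splitting the convolution integral over the left cosets $\alpha_i K_0(p^n)$, using right-invariance of $f_1$ and left-invariance of $f_2$ to pull constants out, and invoking the normalization $\mu(K_0(p^n))=1$ together with left-invariance of the Haar measure for the translates --- is precisely the intended justification. The characterization of the nonzero summands also checks out, since a locally constant bi-invariant function supported on a single double coset is a nonzero constant there, so only the factor $f_2(\alpha_i^{-1}h)$ can vanish.
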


For $t \in \Q_p$ we shall consider the following elements:
\[x(t) =\mat{1}{t}{0}{1},\ y(t)=\mat{1}{0}{t}{1},\  w(t) = \mat{0}{-1}{t}{0},\]
\[d(t)=\mat{t}{0}{0}{1},\ z(t)=\mat{t}{0}{0}{t}.\]
Let $N=\{x(t) : t \in \Q_p \}$, $\bar{N}=\{y(t) : t \in \Q_p \}$ and $A$ be the group of
diagonal matrices of $G$. Let $Z_G = \{z(t) : t \in \Q_p^{*} \}$ denote the center of $G$.

\subsection{The Iwahori Hecke Algebra}
\begin{lem} \label{lem:gen1}
A complete set of representatives for the double cosets of $G$ mod $K_0(p)$
are given by $d(p^n)z(m),\ w(p^n)z(m)$ where $n$, $m$ varies over integers.
\end{lem}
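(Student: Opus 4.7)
The claim is the Iwahori-Bruhat decomposition for $G=\GL_2(\Q_p)$ expressed in the paper's generators: the $K_0(p)$-double cosets are parametrized by the extended affine Weyl group $\widetilde W\cong\Z^2\rtimes S_2$. I would prove coverage (every $g$ lies in some $K_0(p)hK_0(p)$ with $h$ on the list) and disjointness separately.

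For \emph{coverage}: apply the elementary divisor theorem over $\Z_p$ to write $g=k_1\Lambda k_2$ with $k_i\in K$ and $\Lambda=\mathrm{diag}(p^a,p^b)$, $a\le b$; then invoke Bruhat for $\GL_2(\F_p)$ lifted to $K=K_0(p)\sqcup K_0(p)w(1)K_0(p)$, so each $k_i$ is in $K_0(p)$ or of the form $\alpha_iw(1)\beta_i$ with $\alpha_i,\beta_i\in K_0(p)$. To absorb intermediate $K_0(p)$-factors adjacent to $\Lambda$, use the Iwahori factorization $K_0(p)=\bar N^0A^0N^0$ with $\bar N^0=\{y(t):t\in p\Z_p\}$, $A^0=\{\mathrm{diag}(u_1,u_2):u_i\in\Z_p^\times\}$, $N^0=\{x(t):t\in\Z_p\}$, together with the one-sided normalizations
\[\Lambda\,\bar N^0\,\Lambda^{-1}\subseteq\bar N^0,\quad \Lambda^{-1}\,N^0\,\Lambda\subseteq N^0,\quad \Lambda\,A^0\,\Lambda^{-1}=A^0\]
(valid since $a\le b$) and the reflection identity $w(1)\,y(t)\,w(1)^{-1}=x(-t)$. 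After this bookkeeping, $g\in K_0(p)\,\xi\,K_0(p)$ for some $\xi\in\{\Lambda,\Lambda w(1),w(1)\Lambda,w(1)\Lambda w(1)\}$. Direct matrix computation identifies these as $d(p^{a-b})z(p^b)$, $w(p^{b-a})z(p^a)$, $w(p^{a-b})z(p^b)$, and $z(-1)d(p^{b-a})z(p^a)$ respectively; since $z(-1)\in K_0(p)$, every resulting coset lies on the stated list, and as $(a,b)$ varies over $\Z^2$ every $(n,m)\in\Z^2$ is attained in each family.

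For \emph{disjointness}: the pair $(g\Z_p^2,\,g\cdot\F_p e_1)$ of image lattice together with the image of the $K_0(p)$-preferred line is a $K_0(p)$-double-coset invariant. The valuation $v_p(\det g)=n+2m$ is constant on a coset, the unordered Cartan partition $\{m,n+m\}$ refines this, and within a fixed Cartan type the relative position of the image line in the natural Smith basis distinguishes the $d$-family from the $w$-family, detected after reduction mod $p$ since $K_0(p)$ acts on $\F_p^2$ via the upper-triangular Borel $B(\F_p)$ and fixes $\F_p e_1$.

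The main obstacle is the coverage step: since $K_0(p)$ is not normalized by $\Lambda$ when $a<b$, factors cannot be freely commuted past $\Lambda$, and the argument requires careful tracking of which Iwahori piece lies on which side, using $w(1)$ to swap $N^0$ with $\bar N^0$ when a factor sits on the ``wrong'' side. The saving grace is the asymmetric normalization relations above, which always deliver a representative in one of the two declared families.
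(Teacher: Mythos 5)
The paper offers no proof of this lemma --- it simply cites Howe, \S 2.3, where the statement is the Iwahori (affine Bruhat) decomposition for $\GL_2(\Q_p)$ --- so your argument has to stand on its own, and the coverage step as you describe it has a genuine gap. Your absorption scheme (Iwahori-factorize the $K_0(p)$-factors adjacent to $\Lambda=\mat{p^a}{0}{0}{p^b}$, $a\le b$, push $\bar N^0$ and $A^0$ through $\Lambda$ using the one-sided normalizations, and use $w(1)y(t)w(1)^{-1}=x(-t)$ to flip what lands on the wrong side) gets stuck exactly when a factor $x(t)$ with $t\in\Z_p^\times$ sits between $\Lambda$ and a following $w(1)$: conjugating it across $\Lambda$ gives $x(p^{a-b}t)\notin K_0(p)$, while conjugating it across $w(1)$ gives $y(-t)$, which for a unit $t$ is in $K$ but not in $K_0(p)$. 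This is not mere bookkeeping: such elements change double coset. Concretely, with $\Lambda=\mat{1}{0}{0}{p}$ one computes $\Lambda x(1)w(1)=\mat{1}{-1}{p}{0}=y(p)\,\Lambda\,x(-1)$, so $\Lambda x(1)w(1)\in K_0(p)\Lambda K_0(p)$, which by the disjointness you are simultaneously claiming is a \emph{different} coset from $K_0(p)\Lambda w(1)K_0(p)=K_0(p)w(p)K_0(p)$. Hence $K_0(p)\Lambda K_0(p)w(1)K_0(p)$ is a union of two distinct double cosets, and the assertion that the normalization relations ``always deliver a representative in one of the two declared families'' is exactly the point that needs proof. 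The missing ingredient is the rank-one Bruhat identity: for $t\in\Z_p^\times$,
\[
x(t)\,w(1)\;=\;y(t^{-1})\,\mat{t}{0}{0}{t^{-1}}\,x(-t^{-1}),
\]
which gives $\Lambda x(t)w(1)=y(p^{\,b-a}t^{-1})\,\Lambda\,\mat{t}{0}{0}{t^{-1}}\,x(-t^{-1})\in K_0(p)\Lambda K_0(p)$ when $a<b$ (the element ``drops'' into the shorter cell), while for $t\in p\Z_p$ your flip does work and yields $K_0(p)\Lambda w(1)K_0(p)$. This, together with its mirror image for a $w(1)$-factor on the left and the degenerate case $a=b$, is precisely the BN-pair axiom $IsI\cdot I\tilde wI\subseteq Is\tilde wI\cup I\tilde wI$; without it the coverage argument does not close.

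Two smaller points. First, your disjointness invariant is the right idea (relative position of the standard lattice chain $L_0=\Z_p^2\supset L_1=\Z_p e_1+p\Z_p e_2$ and its image under $g$), but as phrased --- ``the image line in the natural Smith basis'' --- it is not yet a well-defined function of the double coset; you must check independence of the choice of Smith basis and verify it separates, e.g., $\mat{1}{0}{0}{p^2}=d(p^{-2})z(p^2)$ from $\mat{p^2}{0}{0}{1}=d(p^2)$, which share determinant valuation and Cartan type. A cleaner invariant is the full array of invariant factors of $gL_j$ in $L_i$ for $i,j\in\{0,1\}$, which is manifestly bi-$K_0(p)$-invariant and is readily checked to take pairwise distinct values on the listed representatives. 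Second, you have (correctly) read the statement's $z(m)$ as $z(p^m)$; it is worth saying so explicitly, since as printed $z(m)$ is a scalar integer matrix and the statement would be false.
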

\begin{proof}
For proof refer to \cite[Section 2.3]{Howe}.
\end{proof}
\begin{lem}
\begin{enumerate} \label{lem:rel3}
 \item  For $n\geq 0$ we have
\[
K_0(p) d(p^n) K_0(p) = \bigsqcup_{s\in \Z_p / p^{n}\Z_p}
x(s)d(p^n)K_0(p)=\bigsqcup_{s\in \Z_p / p^{n}\Z_p}K_{0}(p)d(p^n)y(ps).
\]
\item  For $n \geq 1$ we have
\[
K_0(p) d(p^{-n}) K_0(p) = \bigsqcup_{s\in \Z_p / p^{n}\Z_p}
y(ps)d(p^{-n})K_0(p)=\bigsqcup_{s\in \Z_p / p^{n}\Z_p}K_{0}(p)d(p^{-n})x(s).
\]
\item For $n\geq 1$ we have
\[
K_0(p) w(p^n) K_0(p) = \bigsqcup_{s\in \Z_p / p^{n-1}\Z_p}
y(ps)w(p^n)K_0(p)=\bigsqcup_{s\in \Z_p / p^{n-1}\Z_p}K_{0}(p)w(p^n)y(ps).
\]
\item For $n \geq 0$ we have
\[
K_0(p) w(p^{-n}) K_0(p) = \bigsqcup_{s\in \Z_p / p^{n+1}\Z_p}
x(s)w(p^{-n})K_0(p)=\bigsqcup_{s\in \Z_p / p^{n+1}\Z_p}K_{0}(p)w(p^{-n})x(s).
\]
\end{enumerate}
\end{lem}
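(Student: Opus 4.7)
The plan is to combine the Iwahori factorization
\[
K_0(p) = I^+ \cdot T_0 \cdot I^-,
\]
where $I^+ = N \cap K_0(p) = \{x(t): t \in \Z_p\}$, $I^- = \bar{N} \cap K_0(p) = \{y(ps): s \in \Z_p\}$, and $T_0 = A \cap K$ is the torus of diagonal units, with the explicit conjugation formulas
\begin{align*}
d(p^n) x(t) d(p^n)^{-1} &= x(p^n t), & d(p^n) y(t) d(p^n)^{-1} &= y(p^{-n}t),\\
w(p^n) x(t) w(p^n)^{-1} &= y(-p^n t), & w(p^n) y(t) w(p^n)^{-1} &= x(-p^{-n}t),
\end{align*}
which are verified by elementary $2\times 2$ matrix multiplication. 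These formulas determine, for each of the four $g$'s, which of $I^+$ or $I^-$ is absorbed into $g K_0(p)$ or $K_0(p)g$.

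I would treat part (1) in detail and reduce the other three cases to the same argument. For $g = d(p^n)$ with $n\geq 0$, the formulas give $d(p^n)^{-1} I^- d(p^n) = y(p^{n+1}\Z_p) \subset I^- \subset K_0(p)$, so $I^- d(p^n) K_0(p) = d(p^n) K_0(p)$. Since $T_0 \subset K_0(p)$ commutes with $g$, the Iwahori factorization collapses to $K_0(p) d(p^n) K_0(p) = I^+ d(p^n) K_0(p)$. Two cosets $x(s_1) d(p^n) K_0(p)$ and $x(s_2) d(p^n) K_0(p)$ coincide iff $x(s_1 - s_2) \in d(p^n) K_0(p) d(p^n)^{-1}$, equivalently $s_1 - s_2 \in p^n\Z_p$; this gives $p^n$ distinct cosets, matching the index $[K_0(p): K_0(p) \cap d(p^n) K_0(p) d(p^n)^{-1}]=p^n$. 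The right coset decomposition follows by the mirror argument using $d(p^n) I^+ d(p^n)^{-1} \subset I^+$: then $K_0(p) d(p^n) K_0(p) = K_0(p) d(p^n) I^-$, with the $p^n$ representatives $y(ps)$.

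Parts (2)--(4) follow the same pattern, with the roles of $I^+, I^-$ and the modulus adjusted. For (2), $d(p^{-n})^{-1} I^+ d(p^{-n}) = x(p^n\Z_p) \subset I^+$ yields the left cosets $y(ps) d(p^{-n}) K_0(p)$ indexed by $s \in \Z_p/p^n\Z_p$. For (3), $w(p^n)^{-1} I^+ w(p^n) = y(p^n\Z_p) \subset I^-$ produces the left cosets $y(ps) w(p^n) K_0(p)$ indexed by $\Z_p/p^{n-1}\Z_p$, the reduced modulus arising because $I^- = y(p\Z_p)$ rather than $y(\Z_p)$. For (4), $w(p^{-n})^{-1} I^- w(p^{-n}) = x(p^{n+1}\Z_p) \subset I^+$ gives the $p^{n+1}$ cosets $x(s) w(p^{-n}) K_0(p)$. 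In every case the right coset decomposition is obtained symmetrically, by checking that the opposite unipotent factor is absorbed when placed on the right of $g$.

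The main obstacle is the careful bookkeeping of $p$-adic valuations, particularly the shifts $n\mapsto n-1$ and $n\mapsto n+1$ in the $w$-cases. Once the four conjugation formulas are in hand, every case reduces mechanically to (i) identifying which unipotent factor $g$ absorbs, (ii) counting the distinct cosets among $x(s) g K_0(p)$ or $y(ps) g K_0(p)$ by testing when $\alpha_i\alpha_j^{-1} \in g K_0(p) g^{-1}$, and (iii) matching this count against the index $[K_0(p) : K_0(p) \cap g K_0(p) g^{-1}]$, which is itself read off from the same conjugation formulas.
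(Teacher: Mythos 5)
Your proposal is correct and follows exactly the route the paper indicates: the paper's one-line proof invokes the triangular decomposition $K_0(p)=(N\cap K_0(p))(A\cap K_0(p))(\bar N\cap K_0(p))$, and your argument is precisely the fleshed-out version of that, with the conjugation formulas determining which unipotent factor is absorbed and the coset count. The only implicit point worth noting is that for the cases where $I^-$ appears as the leftmost factor you are using the Iwahori factorization in the opposite order $I^-T_0I^+$ as well, which is standard and harmless.
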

\begin{proof}
The proof easily follows from the triangular decomposition
\[K_0(p) = (N \cap K_0(p))(A \cap K_0(p))(\bar{N} \cap K_0(p)).\]
\end{proof}

Let $\mathcal{T}_n=X_{d(p^n)}$, $\mathcal{U}_n=X_{w(p^n)}$ and $\mathcal{Z}=X_{z(p)}$ be 
elements of the Hecke algebra
$H(G//K_0(p))$. It is easy to see that $\mathcal{Z}$ commutes with every $f\in H(G//K_0(p))$
and that $\mathcal{Z}^n=X_{z(p^n)}$. We have the following well known lemma.
\begin{lem}\label{lem:rel4}
\begin{enumerate}
 \item If $n,m \geq 0$ or $n,m \leq 0$, then $\mathcal{T}_n * \mathcal{T}_m= \mathcal{T}_{n+m}$.
 \item If $n \geq 0$ then $\mathcal{U}_{1}*\mathcal{T}_n= \mathcal{U}_{n+1}$ and $\mathcal{T}_n*\mathcal{U}_1=\mathcal{Z}^n * \mathcal{U}_{1-n}$.
 \item If $n \geq 0$ then $\mathcal{U}_{1}*\mathcal{T}_{-n}= \mathcal{U}_{1-n}$ and $\mathcal{T}_{-n}*\mathcal{U}_1=\mathcal{Z}^{-n}*\mathcal{U}_{1+n}$.
 \item If $n \geq 0$ then $\mathcal{U}_0*\mathcal{T}_{-n}=\mathcal{U}_{-n}$ and $\mathcal{T}_n*\mathcal{U}_0=\mathcal{Z}^n * \mathcal{U}_{-n}$.
 \item For $n\in \Z$, $\mathcal{U}_{1}*\mathcal{U}_{n}= \mathcal{Z}*\mathcal{T}_{n-1}$ and  $\mathcal{U}_{n}*\mathcal{U}_1=\mathcal{Z}^{n}*\mathcal{T}_{1-n}$.
 \item For $n \geq 1$, $\mathcal{U}_0*\mathcal{U}_n=\mathcal{T}_n$ and $\mathcal{U}_n*\mathcal{U}_0=\mathcal{Z}^n * \mathcal{T}_{-n}$.
 \item $\mathcal{U}_0 *\mathcal{U}_0=(p-1)\mathcal{U}_0+p$
\end{enumerate}
\end{lem}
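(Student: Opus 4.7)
The plan is to prove each of the seven identities by one of two standard moves: either apply Lemma \ref{lem:rel1} after verifying the product condition on $\mu$ and computing a representative of the product, or, when $\mu$ is multiplicative fails, apply the convolution formula of Lemma \ref{lem:rel2} directly to the coset decompositions from Lemma \ref{lem:rel3}. Throughout I will freely use that $\mathcal{Z} = X_{z(p)}$ is central in $H(G//K_0(p))$ and that $X_{z(p^a) g} = \mathcal{Z}^a * X_g$ for any $g$, so that multiplication by a central scalar on a matrix representative translates to multiplication by a power of $\mathcal{Z}$.

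For items $(1)$--$(6)$ the strategy is identical: compute the relevant matrix product as a single $2\times 2$ matrix, recognize it up to the central factor as a power of $d(p)$ or $w(p)$, and then use Lemma \ref{lem:rel3} to count the number of left cosets in each of the three double cosets involved. For instance, $w(p)d(p^n)=w(p^{n+1})$ with $\mu$-counts $1\cdot p^n = p^n$ for item $(2)$; $d(p^n)w(p)=z(p^n)w(p^{1-n})$ with $\mu$-counts $p^n\cdot 1 = p^n$ on both sides (splitting the check into $n=0$ where $1-n=1$ gives $\mu=1$ and $n\ge 1$ where $1-n\le 0$ gives $\mu = p^{1-(1-n)}\cdot p = p^n$ via part $(4)$ of Lemma \ref{lem:rel3}); $w(p)w(p^n)=z(p)z(-1)d(p^{n-1})$ and again the $\mu$-counts match for every $n\in\Z$, yielding $(5)$; and for $(6)$, $w(1)w(p^n)=z(-1)d(p^n)$ with $\mu$-counts $p\cdot p^{n-1}=p^n$. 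In every one of these cases Lemma \ref{lem:rel1} applies and gives the stated identity immediately.

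The one item requiring genuine work is $(7)$, because $w(1)^2 = z(-1) = -I \in K_0(p)$, so $\mu(K_0(p)w(1)K_0(p))^2 = p^2$ while $\mu(K_0(p)\{I\}K_0(p))=1$; the multiplicativity fails. Here I would apply Lemma \ref{lem:rel2} to the decomposition $K_0(p)w(1)K_0(p) = \bigsqcup_{s\in\Z_p/p\Z_p} x(s)w(1)K_0(p)$ furnished by Lemma \ref{lem:rel3}(4) with $n=0$. A direct computation gives
\[
x(s)w(1)\,x(s')w(1) \;=\; \mat{ss'-1}{-s}{s'}{-1}.
\]
If $s' \equiv 0 \pmod p$ this matrix is upper triangular with unit diagonal, hence lies in $K_0(p)$; if $s'\not\equiv 0$ its lower-left entry is a unit, so it lies in $K_0(p)w(1)K_0(p)$. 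Since $\mathcal{U}_0*\mathcal{U}_0$ is constant on each double coset, summing contributions: the $p$ pairs $(s,0)$ all land in $K_0(p)$, forcing the value on $K_0(p)$ to be $p$; the remaining $p(p-1)$ pairs distribute over the $p$ left cosets of $K_0(p)w(1)K_0(p)$, forcing the constant value there to be $p-1$. This yields $\mathcal{U}_0*\mathcal{U}_0 = (p-1)\mathcal{U}_0 + p$.

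The main obstacle is thus the case analysis in $(7)$: identifying precisely when the product $x(s)w(1)x(s')w(1)$ falls into $K_0(p)$ versus $K_0(p)w(1)K_0(p)$, and using the bi-invariance of the convolution to convert a pair-count into the coefficients of the Hecke algebra expansion. All the other parts are purely computational verifications of matrix products and coset counts and do not introduce any new difficulty once the framework of Lemmas \ref{lem:rel1} and \ref{lem:rel3} is in hand.
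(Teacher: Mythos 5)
Your proposal is correct and follows essentially the same route as the paper: parts (1)--(6) via the multiplicativity criterion of Lemma~\ref{lem:rel1} combined with the coset counts of Lemma~\ref{lem:rel3}, and part (7) via a direct convolution computation with Lemma~\ref{lem:rel2}, where your mass-counting over the $p^2$ products $x(s)w(1)x(s')w(1)$ is just a repackaging of the paper's direct evaluation at $1$ and $w(1)$. (The intermediate expression $p^{1-(1-n)}\cdot p$ in your check of item (2) should read $p^{-(1-n)}\cdot p$, but the stated value $p^n$ is correct.)
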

\begin{proof}
The parts $(1)$ to $(6)$ follows from Lemma~\ref{lem:rel1} and~\ref{lem:rel3}. 

Using Lemma~\ref{lem:rel2} it is easy to see that
$\mathcal{U}_0*\mathcal{U}_0$ is supported only on the double cosets $K_0(p)$ and $K_0(p)w(1)K_0(p)$,
so to obtain $(7)$ enough to find the values of $\mathcal{U}_0*\mathcal{U}_0$ on the elements $w(1)$ and on $1$.
Using Lemma~\ref{lem:rel2} and~\ref{lem:rel3},
\[\mathcal{U}_0 *\mathcal{U}_0(w(1))=\sum_{s=0}^{p-1}\mathcal{U}_0(x(s)w(1))\mathcal{U}_0(w(1)x(-s)w(1))=\sum_{s=0}^{p-1}\mathcal{U}_0(y(-s))\]
For each $1 \le s \le p-1$ we have $y(-s)\in K_0 w(1) K_0$ while clearly $y(0) \not\in K_0 w(1) K_0$,
hence $\mathcal{U}_0*\mathcal{U}_0(w(1))=p-1$. Further,
\[
\mathcal{U}_0 *\mathcal{U}_0(1)=\sum_{s=0}^{p-1}\mathcal{U}_0(x(s)w)\mathcal{U}_0(wx(-s))=\sum_{s=0}^{p-1}\mathcal{U}_0(w)=p.
\]
\end{proof}
Thus we obtain the following well known theorem:
\begin{thm}\label{thm:thm1Hecke}
The Iwahori Hecke Algebra $H(G//K_0(p))$ is generated by $\mathcal{U}_0$, 
$\mathcal{U}_1$ and $\mathcal{Z}$
with the relations:\\
1) $\mathcal{U}_1^{2}=\mathcal{Z}$\\
2) $(\mathcal{U}_0-p)(\mathcal{U}_0+1)=0$\\
3) $\mathcal{Z}$ commutes with $\mathcal{U}_0$ and $\mathcal{U}_1$
\end{thm}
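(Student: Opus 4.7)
My plan is to verify the three relations hold in $H(G//K_0(p))$ (the easy direction) and then to show they suffice, by a normal-form argument matching the abstract algebra with the double-coset basis from Lemma~\ref{lem:gen1}. For the relations, (1) is the $n=1$ case of Lemma~\ref{lem:rel4}(5) together with $\mathcal{T}_0 = X_e$; (2) is a restatement of Lemma~\ref{lem:rel4}(7); and (3) holds because $z(p)$ is central in $G$, so its characteristic function $\mathcal{Z}$ lies in the center of $C_c^{\infty}(G)$, hence of the subalgebra $H(G//K_0(p))$.

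Next I would verify generation. By Lemma~\ref{lem:gen1}, a $\C$-basis of $H(G//K_0(p))$ consists of the $X_g$ for $g \in \{d(p^n)z(p^m),\, w(p^n)z(p^m) : n,m\in\Z\}$, so it suffices to express each $\mathcal{T}_n$ and $\mathcal{U}_n$ as a word in $\mathcal{U}_0,\mathcal{U}_1$ and $\mathcal{Z}^{\pm 1}$ (with $\mathcal{Z}^{-1}=X_{z(p^{-1})}$ implicitly adjoined). For positive $n$, iterating parts (6) and (2) of Lemma~\ref{lem:rel4} gives $\mathcal{T}_n=(\mathcal{U}_0\mathcal{U}_1)^n$ and $\mathcal{U}_n=\mathcal{U}_1(\mathcal{U}_0\mathcal{U}_1)^{n-1}$; the negative-$n$ formulas follow analogously from parts (3), (4), (6).

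For the harder direction, let $A$ be the abstract $\C$-algebra on generators $u_0,u_1,z^{\pm 1}$ modulo relations (1)--(3). Using (1) to eliminate $u_1^2$, (2) to rewrite $u_0^2$ as $(p-1)u_0+p$, and centrality of $z$ to collect $z$-powers to the left, every element of $A$ becomes a $\C$-linear combination of monomials $z^m w$ where $w$ is a reduced alternating word in $u_0,u_1$ (the empty word, $u_0$, $u_1$, $u_0u_1$, $u_1u_0$, $u_0u_1u_0$, $u_1u_0u_1$, and so on). The preceding steps produce a surjective algebra homomorphism $\phi:A\twoheadrightarrow H(G//K_0(p))$ sending $u_i\mapsto\mathcal{U}_i$ and $z\mapsto\mathcal{Z}$; the remaining task is to check that $\phi$ sends distinct monomials $z^m w$ to distinct double-coset basis elements. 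A direct computation using the explicit formulas from the generation step shows that words of even positive length map onto the $\mathcal{T}_n$-basis (with an appropriate $\mathcal{Z}$-shift when $w$ begins with $u_1$), odd-length words map onto the $\mathcal{U}_n$-basis, and the empty word contributes the pure $\mathcal{Z}^m$-basis, yielding a bijection. Since $\phi$ takes a spanning set of $A$ bijectively onto a basis of $H(G//K_0(p))$, it is an isomorphism. This final bookkeeping bijection is the main obstacle; the earlier steps essentially unpack Lemma~\ref{lem:rel4}.
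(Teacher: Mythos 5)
Your proposal is correct, and it supplies precisely the argument the paper leaves implicit: the theorem is stated with no proof beyond the words ``Thus we obtain the following well known theorem'' immediately after Lemma~\ref{lem:rel4}, so your verification of relations (1)--(3) from parts (5) and (7) of that lemma and the centrality of $\mathcal{Z}$ is exactly the intended easy direction. Your generation formulas $\mathcal{T}_n=(\mathcal{U}_0\mathcal{U}_1)^n$ and $\mathcal{U}_n=\mathcal{U}_1(\mathcal{U}_0\mathcal{U}_1)^{n-1}$, and the normal-form bijection between monomials $z^m w$ (with $w$ alternating) and the double-coset basis of Lemma~\ref{lem:gen1}, both check out; the latter is the completeness-of-relations step that the paper does not address at all, and your observation that injectivity of $\phi$ follows from a spanning set mapping bijectively onto a basis is the right way to close the argument. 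One point worth stating explicitly, which you already flag parenthetically: the subalgebra generated by $\mathcal{U}_0,\mathcal{U}_1,\mathcal{Z}$ \emph{without} $\mathcal{Z}^{-1}$ is proper, since every product of these generators is supported on matrices whose determinant has nonnegative $p$-adic valuation, so for instance $\mathcal{T}_{-1}=X_{d(p^{-1})}$ is not in it; the theorem's generating set must therefore be read as including $\mathcal{Z}^{-1}$ (equivalently $\mathcal{U}_1^{-1}$), exactly as you do.
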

\begin{remark}
The algebra $H(G//K_0(p)) / \langle \mathcal{Z} \rangle $ is an algebra generated by $\mathcal{U}_0$ and $\mathcal{U}_1$ with the relations
$\mathcal{U}_1^{2}=1$ and $(\mathcal{U}_0-p)(\mathcal{U}_0+1)=0$.
\end{remark}

\subsection{A subalgebra}

Let $H(K//K_0(p^n))$ denotes the subalgebra of the algebra $H(G//K_0(p^n))$ consisting of functions supported on $K$.
We shall now be looking at generators and relations for $H(K//K_0(p^n))$ when $n \geq 2$.

We consider the double cosets of $K$ mod $K_0(p^n)$.
We first note the following lemma~\cite[Lemma 1]{Casselman2}.
\begin{lem} \label{L:reps2}
A complete set of representatives for the double cosets of $K$ mod $K_0(p^n)$ are given by
$1,\ w(1),\ y(p),\ y(p^2),\ \ldots\ y(p^{n-1})$.
\end{lem}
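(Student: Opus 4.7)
The plan is to identify $K_0(p^n)\backslash K/K_0(p^n)$ with an orbit space on $\PP^1(\Z/p^n\Z)$ and distinguish orbits by the valuation of the first projective coordinate. The bottom-row map $g\mapsto [g_{21}:g_{22}]\bmod p^n$ identifies $K_0(p^n)\backslash K$ with $\PP^1(\Z/p^n\Z)$: indeed, $K$ acts on row vectors by right multiplication and $K_0(p^n)$ is exactly the stabilizer of $[0:1]$ under this action, since $(0,1)g=(g_{21},g_{22})$ is proportional to $(0,1)$ mod $p^n$ iff $g_{21}\in p^n\Z_p$. Double cosets $K_0(p^n)\backslash K/K_0(p^n)$ therefore correspond to right $K_0(p^n)$-orbits on $\PP^1(\Z/p^n\Z)$, and since $K_0(p^n)$ reduces mod $p^n$ to the upper-triangular Borel of $\GL_2(\Z/p^n\Z)$, this becomes a Borel orbit computation.

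The invariant I would attach to $g\in K$ is $\nu(g):=\min(v_p(g_{21}),n)\in\{0,1,\dots,n\}$. It is well-defined on double cosets: for $k=\mat{a'}{b'}{c'}{d'}\in K_0(p^n)$, the $(2,1)$-entry of $kg$ (resp.\ $gk$) is $c'g_{11}+d'g_{21}$ (resp.\ $g_{21}a'+g_{22}c'$), and since $c'\in p^n\Z_p$ and $a',d'\in\Z_p^{\times}$, this has $p$-adic valuation equal to $v_p(g_{21})$ whenever $v_p(g_{21})<n$, and $\ge n$ otherwise. Since $\nu(1)=n$, $\nu(w(1))=0$, and $\nu(y(p^j))=j$ for $1\le j\le n-1$, the proposed representatives lie in pairwise distinct double cosets and realize all $n+1$ possible values of $\nu$.

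For exhaustiveness I would argue by case on $j=\nu(g)$. The case $j=n$ is immediate: $g\in K_0(p^n)$. For $j=0$, so $g_{21}\in\Z_p^{\times}$, right multiplication by $\mat{g_{21}^{-1}}{0}{0}{1}\in K_0(p^n)$ makes the $(2,1)$-entry $1$, a left shear $\mat{1}{-g_{11}g_{21}^{-1}}{0}{1}\in K_0(p^n)$ kills the $(1,1)$-entry, and a final upper-triangular right multiplication in $K_0(p^n)$ normalizes the resulting $\mat{0}{\alpha}{1}{d}$ (with $\alpha\in\Z_p^{\times}$) to $w(1)$. For $1\le j\le n-1$, the determinant condition forces $g_{11},g_{22}\in\Z_p^{\times}$ (since $bc\in p\Z_p$), so one reduces $g$ to some $y(up^j)$ with $u\in\Z_p^{\times}$ using a unit diagonal and an upper-triangular shear, all inside $K_0(p^n)$; the identity $y(up^j)=\mat{u^{-1}}{0}{0}{1}\,y(p^j)\,\mat{u}{0}{0}{1}$ with both diagonal factors in $K_0(p^n)$ then places $g$ in the double coset of $y(p^j)$.

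The main technical obstacle is that all reductions must remain inside $K_0(p^n)$: lower-triangular shears are forbidden because their lower-left entry need not lie in $p^n\Z_p$, so one is restricted to upper-triangular shears and diagonal matrices with unit entries. A cleaner alternative that avoids explicit matrix manipulations is to compute the Borel orbits on $\PP^1(\Z/p^n\Z)$ directly: the orbit of $[0:1]$ is a fixed point, the orbit of $[1:0]$ is $\{[1:x]:x\in\Z/p^n\Z\}$ of size $p^n$, and for $1\le j\le n-1$ the orbit of $[p^j:1]$ equals $\{[p^j u:1]:u\in(\Z/p^{n-j}\Z)^{\times}\}$ of size $\phi(p^{n-j})$; summing $1+p^n+\sum_{j=1}^{n-1}\phi(p^{n-j})=p^n+p^{n-1}=|\PP^1(\Z/p^n\Z)|$ confirms that these $n+1$ orbits exhaust the space and match the images of the proposed representatives.
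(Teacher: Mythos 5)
Your argument is correct, but it is not the route the paper takes: the paper simply quotes this statement from Casselman (\cite{Casselman2}, Lemma 1), and the proof it has in mind (visible in a commented-out block in the source) is an induction on $n$, passing from the double cosets mod $K_0(p^{n-1})$ to those mod $K_0(p^n)$ via the coset representatives $y(ap^{n-1})$ of $K_0(p^{n-1})/K_0(p^n)$ and checking case by case that $y(ap^{n-1})\,g\,y(bp^{n-1})$ collapses back onto the list. Your approach instead identifies $K_0(p^n)\backslash K$ with $\PP^1(\Z/p^n\Z)$ via the bottom row and computes the Borel orbits directly, with the valuation $\nu(g)=\min(v_p(g_{21}),n)$ as a complete double-coset invariant. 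All the steps check out: $\nu$ is invariant on both sides because $c'\in p^n\Z_p$ and $a',d'\in\Z_p^{\times}$, the explicit reductions to $w(1)$ and to $y(up^j)$ use only upper-triangular and unit-diagonal elements of $K_0(p^n)$ as required, and the conjugation $y(up^j)=\mat{u^{-1}}{0}{0}{1}y(p^j)\mat{u}{0}{0}{1}$ finishes the middle cases. What your method buys beyond the paper's citation is that it is self-contained and non-inductive, and the orbit sizes $1$, $p^n$, $\phi(p^{n-j})$ you compute are exactly the coset counts $\mu(K_0 g K_0)$ that the paper has to establish separately later (Lemmas~\ref{lem:lem1p^r} and~\ref{lem:lem2p^r} and Proposition~\ref{prop:reln2}); the closing count $1+p^n+\sum_{j=1}^{n-1}\phi(p^{n-j})=p^{n-1}(p+1)=[K:K_0(p^n)]$ gives an independent confirmation of exhaustiveness. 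The inductive route, by contrast, generalizes more readily to situations where no convenient homogeneous-space model is available.
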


For simplicity, we shall write $K_0$ for $K_0(p^n)$. 

Let $\mathcal{U}_0=X_{w(1)}$ and \mbox{$\mathcal{V}_{r} = X_{y(p^r)}$} for $1 \le r \le n-1$ be the elements of
$H(G//K_0)$. Then by the above lemma, $H(K//K_0)$ is spanned by $1$, $\mathcal{U}_0$ and
$\mathcal{V}_{r}$ where $1 \le r \le n-1$.

We shall need the following lemmas.
\begin{lem}\label{lem:lem1p^r}
Assume that $r$ satisfies $n>r\geq n/2$. Then
\[
K_0y(p^r)K_0=\bigsqcup_{s \in \Z_p^{*}/1+p^{n-r}\Z_p}
d(s)y(p^{r})K_{0}
=\bigsqcup_{s \in \Z_p^{*}/1+p^{n-r}\Z_p}
K_{0}y(p^{r})d(s)
\]
\end{lem}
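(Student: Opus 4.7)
The plan is to exploit the general principle that the number of disjoint left (resp.\ right) $K_0$-cosets in a double coset $K_0 g K_0$ equals $[K_0 : K_0 \cap g K_0 g^{-1}]$ (resp.\ $[K_0 : K_0 \cap g^{-1} K_0 g]$), and that any transversal for the relevant quotient of $K_0$ furnishes a set of coset representatives inside $K_0 g K_0$. So I first need to identify this intersection for $g=y(p^r)$ and then exhibit $\{d(s)\}$ as a transversal.

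For the first step, let $g=\mat{a}{b}{c}{d}\in K_0$ (so $c\in p^n\Z_p$, forcing both $a,d\in\Z_p^{\times}$). A direct computation shows that $y(-p^r)\,g\,y(p^r)$ has lower-left entry $p^r(d-a)+c-p^{2r}b$. Because $r\ge n/2$ we have $p^{2r}\in p^n\Z_p$, and together with $c\in p^n\Z_p$ this collapses the condition for the conjugate to lie in $K_0$ to the single requirement $a\equiv d\pmod{p^{n-r}}$. The symmetric computation gives the same description for $K_0\cap y(p^r)K_0 y(-p^r)$; call the common set $H$.

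Next, define $\phi: K_0\to \Z_p^{\times}/(1+p^{n-r}\Z_p)$ by $\phi(g)=ad^{-1}$. To see $\phi$ is a homomorphism observe that in the product $gh$ the $(1,1)$-entry is $a_g a_h+b_g c_h$ and the $(2,2)$-entry is $c_g b_h+d_g d_h$; since $c_g,c_h\in p^n\Z_p\subseteq p^{n-r}\Z_p$, the cross terms vanish modulo $1+p^{n-r}\Z_p$, giving $\phi(gh)=\phi(g)\phi(h)$. Its kernel is visibly $H$, and surjectivity is immediate from $\phi(d(s))=s$. Hence $\{d(s): s\in\Z_p^{\times}/(1+p^{n-r}\Z_p)\}$ is a full transversal for $K_0/H$, which yields the left-coset decomposition $K_0 y(p^r)K_0=\bigsqcup_s d(s)y(p^r)K_0$. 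Because $H$ is the kernel of a map to an abelian group it is normal in $K_0$, so the same $d(s)$ also serve as representatives of $H\backslash K_0$, producing the right-coset decomposition $K_0 y(p^r)K_0=\bigsqcup_s K_0 y(p^r)d(s)$.

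The main technical point is the conjugation calculation, where it is crucial that the hypothesis $r\ge n/2$ be used to absorb the $p^{2r}b$ term; without it the intersection $H$ would be a genuinely smaller and more awkward subgroup and the simple transversal $\{d(s)\}$ would no longer suffice. Once this is in place the remaining steps—verifying the homomorphism property of $\phi$, identifying its kernel and image, and invoking the standard double-coset bookkeeping—are straightforward.
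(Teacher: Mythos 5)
Your proof is correct, but it takes a genuinely different route from the paper's. The paper starts from the Iwahori factorization $K_0=(N\cap K_0)(A\cap K_0)(\bar N\cap K_0)$, absorbs the unipotent and $1+p^{n-r}\Z_p$ factors into the right-hand copy of $K_0$ by the explicit conjugations $y(-p^r)d(a')y(p^r)$ and $y(-p^r)x(u)y(p^r)$ (the latter being where $2r\ge n$ enters), and then checks disjointness by hand; the right-coset decomposition is obtained by a parallel computation. You instead identify the stabilizer $H=K_0\cap y(p^r)K_0y(p^r)^{-1}=\{g\in K_0: a\equiv d \pmod{p^{n-r}}\}$ directly from the conjugation formula (same use of $2r\ge n$ to kill the $p^{2r}b$ term), exhibit it as the kernel of the homomorphism $\phi(g)=ad^{-1}$ into $\Z_p^{*}/(1+p^{n-r}\Z_p)$, and read off the coset count and the transversal $\{d(s)\}$ from the general index formula for double cosets. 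Your version buys two things: disjointness is automatic (the $d(s)$ hit distinct classes under $\phi$), and normality of $H=\ker\phi$ gives the left and right decompositions simultaneously rather than by two symmetric computations. What the paper's more hands-on factorization approach buys is that it adapts, with the extra input of Lemma~\ref{L:stab}, to the complementary range $0<r<n/2$ treated in Lemma~\ref{lem:lem2p^r}, where — as you correctly observe — the stabilizer is no longer cut out by the single congruence $a\equiv d$ and your clean homomorphism argument breaks down. Two cosmetic quibbles: the phrase ``the cross terms vanish modulo $1+p^{n-r}\Z_p$'' conflates an additive perturbation with a multiplicative coset (what you mean is $a_ga_h+b_gc_h=a_ga_h\bigl(1+(b_gc_h)(a_ga_h)^{-1}\bigr)$ with $(b_gc_h)(a_ga_h)^{-1}\in p^n\Z_p$), and the sentence introducing the ``symmetric computation'' names the same intersection twice rather than $K_0\cap y(p^r)^{-1}K_0y(p^r)$; neither affects the argument.
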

\begin{proof}
Since $K_0 = N'A'\bar{N}'$ where $N'=N \cap K_0$, $A'=A \cap K_0$ and $\bar{N}' =\bar{N} \cap K_0$,
and  $A' = DZ'$ where $D$ consists of matrices $d(a) \in K$ and $Z' = Z_G \cap K$, we have
\[K_0y(p^r)K_0 =  N'A'\bar{N}'y(p^r)K_0 = N'A'y(p^r)K_0 = N'Dy(p^r)K_0 .\]
Now any $a \in \Z_p^{*}$ can be written as $a =sa'$ where 
$a' \in 1+p^{n-r}\Z_p$ and $s\in \Z_p^{*}/1+p^{n-r}\Z_p$. Since
\[y(-p^r)d(a')y(p^r) = \mat{a'}{0}{p^r(1-a')}{1} \in K_0\]
we get that
\[K_0y(p^r)K_0 = \bigcup_{s \in \Z_p^{*}/1+p^{n-r}\Z_p}N'd(s)y(p^{r})K_{0}.\]
We obtain the decomposition since
\[N'd(s)=d(s)N' \quad \text{and} \quad y(-p^r)x(u)y(p^r)=\mat{1+up^{r}}{u}{-up^{2r}}{1-up^r} \in K_0. \]
Now we show that the union is disjoint.
Let $g_1=d(s_1)y(p^{r})$ and $g_2=d(s_2)y(p^{r})$. Assume
$g_1^{-1}g_2 \in K_0$ then
\[
y(-p^r)d(s_1^{-1}s_2)y(p^r)= \mat{s_1^{-1}s_2}{0}{(1-s_1^{-1}s_2)p^r}{1} \in  K_0,
\]
hence $s_1^{-1}s_2\in 1+p^{n-r}\Z_p$.
\end{proof}
\begin{lem} \label{L:stab}
Assume that $0<r<n/2$. Let $K_0^{y(p^r)}=y(p^r)K_0y(p^r)^{-1}\cap K_0$. Then an element of $K_0^{y(p^r)}$ 
can be written as $y(v)z(t)d(s)x(u)$ where $v\in p^{n}\Z_p$, $t,\ s\in \Z_p^{*}$, $u\in \Z_p$ and 
$s-1-p^r u\in p^{n-r}\Z_p$.
\end{lem}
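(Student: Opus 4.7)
The plan is to exploit the Iwahori-type triangular factorization $K_0 = N' A' \bar{N}'$ used in the proof of Lemma~\ref{lem:lem1p^r}, together with the splitting of the integral torus $A' = Z' D'$ where $Z' = Z_G \cap K$ and $D' = \{d(s) : s \in \Z_p^*\}$. This factorization writes every element $h$ of $K_0$ uniquely as $h = y(v)\,z(t)\,d(s)\,x(u)$ with $v \in p^n\Z_p$, $t, s \in \Z_p^*$, and $u \in \Z_p$, so the stated form of the decomposition and the first three parameter constraints hold automatically for any $h \in K_0$, not just for $h \in K_0^{y(p^r)}$. What is substantive is the extra congruence $s - 1 - p^r u \in p^{n-r}\Z_p$, which must encode the second defining condition $h \in y(p^r) K_0 y(p^r)^{-1}$, i.e.\ $y(p^r)^{-1} h\, y(p^r) \in K_0$.

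To extract that congruence, I would first multiply out $h = y(v)z(t)d(s)x(u)$ to obtain its matrix entries as explicit polynomials in $v, t, s, u$; in particular $a = ts$, $b = tsu$, $c = vts$, $d = t + vb$. Then I would compute the conjugate $y(p^r)^{-1} h\, y(p^r)$ and isolate its lower-left entry: a direct calculation expresses it as $c + p^r(d - a) - p^{2r}\,b$, where $a, b, c, d$ are the entries of $h$. The second membership condition is exactly that this lie in $p^n\Z_p$. Substituting the parameterization, using $v \in p^n\Z_p$ to absorb all $v$-contributions into $p^n\Z_p \subseteq p^{n-r}\Z_p$, dividing by the $p^r$ coming from the conjugation and by the unit $t$, and simplifying, reduces the integrality statement to a congruence of precisely the form $s - 1 - p^r u \equiv 0 \pmod{p^{n-r}\Z_p}$.

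The place where the hypothesis $0 < r < n/2$ enters, and the main piece of bookkeeping to watch, is the cross term of order $p^{2r}$ coming from the $-p^{2r} b$ contribution to the conjugate, together with the higher-order corrections arising when one inverts the unit $1 \pm p^r u$ in $\Z_p^*$. The inequality $r < n/2$ gives $2r < n$, which is exactly what allows these auxiliary $p^{2r}$-terms to be absorbed into the $p^{n-r}\Z_p$-error at the desired precision. Beyond this sign and exponent tracking the remaining steps are routine matrix algebra, with no deeper obstruction; the main obstacle is simply keeping the various orders of vanishing organized so that only the advertised relation $s - 1 - p^r u \in p^{n-r}\Z_p$ survives.
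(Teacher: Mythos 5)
The paper states this lemma without any proof, so there is no argument of the authors' to compare against; I am judging your proposal on its own terms. Your setup is the right one and is executed correctly up to the last step: the triangular factorization $h=y(v)z(t)d(s)x(u)$ with entries $a=ts$, $b=tsu$, $c=vts$, $d=t+vb$ is the unique such decomposition of an element of $K_0$, and the lower-left entry of $y(p^r)^{-1}hy(p^r)$ is indeed $c+p^r(d-a)-p^{2r}b$, whose membership in $p^n\Z_p$ is exactly the second defining condition. The gap is in the final ``simplification''. Substituting, discarding the $v$-contributions, and dividing by the unit $t$ and by $p^r$, the condition becomes $1-s-p^rsu\in p^{n-r}\Z_p$, i.e.\ $s(1+p^ru)\equiv 1 \pmod{p^{n-r}\Z_p}$, i.e.\ $s\equiv(1+p^ru)^{-1}\pmod{p^{n-r}\Z_p}$. (Note the surviving cross term is $p^rsu$, not $p^ru$.) This is \emph{not} the advertised congruence $s\equiv 1+p^ru$: the difference $(1+p^ru)^{-1}-(1+p^ru)=-p^ru(2+p^ru)(1+p^ru)^{-1}$ has valuation exactly $r$ for $u\in\Z_p^*$ and $p$ odd, and $p^r\notin p^{n-r}\Z_p$ precisely because $r<n/2$. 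So no valid bookkeeping arrives at the stated form.

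The specific step that fails is your appeal to $r<n/2$: you claim $2r<n$ allows the $p^{2r}$-terms to be absorbed into $p^{n-r}\Z_p$, but the inclusion $p^{2r}\Z_p\subseteq p^{n-r}\Z_p$ would require $3r\ge n$, and $2r<n$ in fact says that $p^{2r}b$ is \emph{not} negligible modulo $p^n$. The hypothesis $r<n/2$ is present for the opposite reason: it is exactly the range in which the $-p^{2r}b$ term survives and forces the congruence to involve $u$ at all (for $r\ge n/2$ that term dies modulo $p^n$ and one recovers the simpler stabilizer condition $s\equiv 1\pmod{p^{n-r}\Z_p}$ implicit in Lemma~\ref{lem:lem1p^r}). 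The repair is to prove the clean equivalence your computation actually yields: $y(v)z(t)d(s)x(u)\in K_0^{y(p^r)}$ if and only if $v\in p^n\Z_p$ and $s(1+p^ru)-1\in p^{n-r}\Z_p$. This corrected form is also what the application in Lemma~\ref{lem:lem2p^r} genuinely needs: the element inserted there should be $d\bigl((1+p^ru)^{-1}\bigr)x(u)$, whose conjugate by $y(p^r)$ one checks lies in $K_0$ exactly (lower-left entry $0$), whereas $d(1+p^ru)x(u)$ conjugates to a matrix with lower-left entry $-2p^{2r}u-p^{3r}u^2\notin p^n\Z_p$ in general. The coset decomposition proved there is unaffected by this correction.
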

\begin{lem}\label{lem:lem2p^r}
Assume that $r$ satisfies $0<r< n/2$. Then
\[
K_0y(p^r)K_0=\bigsqcup_{s \in \Z_p^{*}/1+p^{n-r}\Z_p}
d(s)y(p^{r})K_{0}
=\bigsqcup_{s \in \Z_p^{*}/1+p^{n-r}\Z_p}
K_{0}y(p^{r})d(s)
\]
\end{lem}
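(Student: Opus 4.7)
The plan is to use Lemma~\ref{L:stab} to describe the stabilizer $K_0^{y(p^r)}=y(p^r)K_0y(p^r)^{-1}\cap K_0$ explicitly, and then apply the standard double coset fact (the principle articulated in the commented-out lemma) that $K_0 y(p^r)K_0=\bigsqcup_i h_i\,y(p^r)K_0$ where the $h_i$ run over a complete set of left coset representatives of $K_0^{y(p^r)}$ in $K_0$. The new feature compared to the proof of Lemma~\ref{lem:lem1p^r} is that for $r<n/2$ we no longer have $p^{2r}\in p^n\Z_p$, so the direct absorption $y(-p^r)x(u)y(p^r)\in K_0$ used there fails, and the refined description of the stabilizer from Lemma~\ref{L:stab} is genuinely required.

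For the first equality I would write an arbitrary $k\in K_0$ in its Iwahori factorization $k=y(v)z(t)d(s)x(u)$ with $v\in p^n\Z_p$, $t,s\in\Z_p^{*}$, $u\in\Z_p$. Using the relation $d(s')^{-1}y(v)=y(s'v)d(s')^{-1}$ (a one-line matrix check), one obtains
\[
d(s')^{-1}k \;=\; y(s'v)\,z(t)\,d(s/s')\,x(u).
\]
By Lemma~\ref{L:stab}, this element lies in $K_0^{y(p^r)}$ precisely when $s/s'\equiv 1+p^r u\pmod{p^{n-r}}$. Since $1+p^r u\in\Z_p^{*}$, for each $k\in K_0$ there is a unique class $s'\in\Z_p^{*}/(1+p^{n-r}\Z_p)$ satisfying this congruence, so $\{d(s'):s'\in\Z_p^{*}/(1+p^{n-r}\Z_p)\}$ forms a complete set of representatives for the left cosets of $K_0^{y(p^r)}$ in $K_0$, and the first equality follows.

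For the second equality I would use the direct identity $y(p^r)d(s)=d(s)y(p^r s)$, immediate from matrix multiplication, which gives $K_0 y(p^r)d(s)=K_0 d(s)y(p^r s)=K_0 y(p^r s)$ since $d(s)\in K_0$. Two such right cosets coincide iff $y(p^r(s_1-s_2))\in K_0$, iff $s_1-s_2\in p^{n-r}\Z_p$, equivalently $s_1\equiv s_2$ in $\Z_p^{*}/(1+p^{n-r}\Z_p)$. Because the number of right $K_0$-cosets inside $K_0 y(p^r)K_0$ equals the number of left ones (both are $(p-1)p^{n-r-1}$), the $(p-1)p^{n-r-1}$ distinct cosets $K_0 y(p^r)d(s)$ must exhaust $K_0 y(p^r)K_0$. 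The main obstacle throughout is careful bookkeeping of the Iwahori factorization modulo $p^{n-r}$; once Lemma~\ref{L:stab} is in hand, the argument is essentially formal.
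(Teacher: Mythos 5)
Your proof is correct and follows essentially the same route as the paper: both arguments hinge on Lemma~\ref{L:stab} to absorb the $x(u)$ factor of the Iwahori factorization into the stabilizer $K_0^{y(p^r)}$ (you by exhibiting $\{d(s')\}$ as a complete set of left coset representatives of $K_0^{y(p^r)}$ in $K_0$ and invoking the general $HgH=\bigsqcup h_i gH$ principle, the paper by directly rewriting $g=d(s)x(u)y(p^r)k_0$ as $d(s_1)y(p^r)k_1$ with $s_1=s(1+p^ru)^{-1}$). The only real divergence is your handling of the right-coset decomposition via the identity $y(p^r)d(s)=d(s)y(p^rs)$ plus a count matching the number of left and right cosets; the paper leaves that half implicit, and your version is a correct and slightly more complete way to finish.
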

\begin{proof}
As in Lemma~\ref{lem:lem1p^r} we can write
$g=d(s)x(u)y(p^r)k_0$ where $s\in \Z_p^{*}$, $u\in \Z_p$ and $k_0\in K_0$. Now
\[
g=d(s)d(1+p^{r}u)^{-1}d(1+p^{r}u)x(u)y(p^r)k_0
\]
It follows from Lemma~\ref{L:stab} that $d(1+p^{r}u)x(u)\in K_0^{y(p^r)}$. Let $s_1=s(1+p^{r}u)^{-1} \in \Z_p^*$. Then
we get that $g=d(s_1)y(p^r)k_1$
for some $k_1\in K_0$ hence we get the decomposition as in the statement . The disjointness follows as in Lemma~\ref{lem:lem1p^r}.
\end{proof}

\begin{prop}\label{prop:reln1}
We have the following relations in $H(K//K_0)$:
\begin{enumerate}
 \item $\mathcal{V}_r^2 = p^{n-r-1}(p-1) ( I + \sum_{j=r+1}^{n-1} 
 \mathcal{V}_{j} ) + p^{n-r-1}(p-2) \mathcal{V}_r$.
 \item $\mathcal{V}_r* \mathcal{V}_{j} = (p-1)p^{n-j-1}\mathcal{V}_r = \mathcal{V}_{j}* \mathcal{V}_r \quad \mathrm{ for } \quad r+1 \le j \le n-1.$
 \item Let $\mathcal{Y}_{r+1} = I + \sum_{j=r+1}^{n-1} \mathcal{V}_{j}$. Then
 \[\mathcal{V}_r * \mathcal{Y}_{r+1} = p^{n-r-1} \mathcal{V}_r = \mathcal{Y}_{r+1} * \mathcal{V}_r,\] and so,
 \[(\mathcal{V}_r - p^{n-r-1}(p-1))(\mathcal{V}_r +\mathcal{Y}_{r+1})=0.\]
\end{enumerate}
\end{prop}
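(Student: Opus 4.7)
The plan is to prove (1) and (2) by direct convolution calculations via Lemma~\ref{lem:rel2}, and then derive (3) as a formal consequence. The key input is the uniform left-coset decomposition
\[
K_0 y(p^r) K_0 \;=\; \bigsqcup_{s\,\in\,\Z_p^{*}/(1+p^{n-r}\Z_p)}\, d(s)\, y(p^r)\, K_0,
\]
supplied by Lemmas~\ref{lem:lem1p^r} and~\ref{lem:lem2p^r}, which has $(p-1)p^{n-r-1}$ left cosets, together with the elementary matrix identity
\[
d(s_1) y(p^a) \cdot d(s_2) y(p^b) \;=\; \mat{s_1 s_2}{0}{\,s_2\, p^a + p^b\,}{1}.
\]
Throughout, the $p$-adic valuation of the $(2,1)$-entry identifies the double coset of Lemma~\ref{L:reps2} ($0$ for $K_0 w(1) K_0$, $l$ for $K_0 y(p^l) K_0$, $\ge n$ for $K_0$), and is preserved under right-multiplication by any $k \in K_0$ because the diagonal entries of such $k$ are necessarily units in $\Z_p^{*}$.

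First, I would prove~(2). For $j \ge r+1$, the above product has $(2,1)$-entry $p^r(s_2 + p^{j-r})$, a unit multiple of $p^r$. Hence the support of $\mathcal{V}_r * \mathcal{V}_j$ lies in $K_0 y(p^r) K_0$, forcing $\mathcal{V}_r * \mathcal{V}_j = c\, \mathcal{V}_r$ for some scalar $c$. To pin down $c$, I would evaluate at $h = y(p^r)$ via Lemma~\ref{lem:rel2} with $\alpha_s = d(s) y(p^r)$: a short computation gives $\alpha_s^{-1} y(p^r) = \mat{s^{-1}}{0}{\,p^r(1-s^{-1})\,}{1}$, which lies in $K_0 y(p^j) K_0$ precisely when $v_p(1-s^{-1}) = j - r$. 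The number of such classes in $\Z_p^{*}/(1+p^{n-r}\Z_p)$ is $p^{n-j} - p^{n-j-1} = (p-1)p^{n-j-1}$, so $c = (p-1)p^{n-j-1}$; the reverse convolution $\mathcal{V}_j * \mathcal{V}_r$ is handled symmetrically.

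For~(1) I would apply the same matrix identity with $a = b = r$: the $(2,1)$-entry is $p^r(s_2+1)$, of valuation $r + v_p(s_2+1)$, which ranges over $\{r, r+1, \ldots, n-1\}$ as well as the value $\ge n$ (when $s_2 \equiv -1 \pmod{p^{n-r}}$). Since the valuation is always $\ge r \ge 1$, no $\mathcal{U}_0$-contribution occurs, so
\[
\mathcal{V}_r^2 = a_{-1} I + a_r \mathcal{V}_r + \sum_{j=r+1}^{n-1} a_j \mathcal{V}_j.
\]
I would read off each coefficient by evaluating $\mathcal{V}_r^2$ at the corresponding representative from Lemma~\ref{L:reps2}: at $h = 1$, $\alpha_s^{-1} = \mat{s^{-1}}{0}{-p^r s^{-1}}{1}$ always has $(2,1)$-valuation $r$, giving $a_{-1} = (p-1)p^{n-r-1}$; at $h = y(p^j)$ with $j > r$, the $(2,1)$-entry $p^j - p^r s^{-1}$ also always has valuation $r$, giving $a_j = (p-1)p^{n-r-1}$; and at $h = y(p^r)$, the entry $p^r(1-s^{-1})$ has valuation exactly $r$ only when $s \not\equiv 1 \pmod p$, so $a_r = (p-1)p^{n-r-1} - p^{n-r-1} = (p-2)p^{n-r-1}$. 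Assembling gives~(1).

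Relation~(3) is then algebraic: summing~(2) over $j = r+1, \ldots, n-1$ and including the $I$-term yields $\mathcal{V}_r * \mathcal{Y}_{r+1} = \bigl(1 + (p-1)\sum_{j=r+1}^{n-1} p^{n-j-1}\bigr)\, \mathcal{V}_r = p^{n-r-1}\, \mathcal{V}_r$, and expanding $(\mathcal{V}_r - (p-1)p^{n-r-1})(\mathcal{V}_r + \mathcal{Y}_{r+1})$ and substituting~(1) produces zero. The main technical hazard in the plan is the combinatorial bookkeeping in $\Z_p^{*}/(1+p^{n-r}\Z_p)$; the subtle appearance of $p - 2$ (rather than $p - 1$) in the coefficient of $\mathcal{V}_r$ comes from the fact that the reduction $\Z_p^{*}/(1+p^{n-r}\Z_p) \twoheadrightarrow (\Z/p)^{*}$ has $p - 2$ nontrivial fibers contributing to the $K_0 y(p^r) K_0$ piece, with the identity fiber pushed up into the higher-valuation double cosets.
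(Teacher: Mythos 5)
Your proof is correct and follows essentially the same route as the paper: the same left-coset decompositions of $K_0y(p^r)K_0$ from Lemmas~\ref{lem:lem1p^r} and~\ref{lem:lem2p^r}, the same use of Lemma~\ref{lem:rel2} to reduce everything to evaluation at the double-coset representatives of Lemma~\ref{L:reps2}, and the same formal derivation of (3) from (1) and (2). The one organizational difference is that where the paper certifies double-coset membership by exhibiting explicit witnesses (choices of $s,t$ for the support and a matrix $A\in K_0$ for each evaluation, with a separate discussion of $p=2$), you use the truncated valuation of the $(2,1)$-entry as a complete invariant of the $K_0$-double cosets in $K$ --- legitimate, since $\min(v_p(c),n)$ is unchanged under left and right multiplication by $K_0$ and separates the representatives $1$, $w(1)$, $y(p^l)$ --- which makes the counts, including the uniform coefficient $(p-2)p^{n-r-1}$ that vanishes at $p=2$, fall out with less case analysis.
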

\begin{proof}
For $(1)$, we first compute the support of $\mathcal{V}_r*\mathcal{V}_r$. By Lemma~\ref{lem:lem1p^r} and~\ref{lem:lem2p^r},
\[K_0y(p^r)K_0 = \bigsqcup_{s \in \Z_p^*/ 1+p^{n-r}\Z_p} \alpha_sK_0 \quad \text{ where $\alpha_s= d(s)y(p^r)$},\]
so using Lemma~\ref{lem:rel2} we get that
$\mathcal{V}_r*\mathcal{V}_r$ is supported on those $g \in G$ for which there exists $s,\ t \in \Z_p^*/ 1+p^{n-r}\Z_p$
such that
\[(\alpha_s\alpha_t)^{-1}g = \mat{\frac{1}{st}}{0}{\frac{-p^r(t+1)}{st}}{1}g \in K_0.\]
Clearly it is enough
to check the support on $g =1,\ w(1),\ y(p^{j})$ for $1 \le j \le n-1$.
Note that $(\alpha_s\alpha_t)^{-1}w(1) = \mat{0}{*}{1}{*} \not\in K_0$.
For $g=1$ taking $s=1$ and $t=p^{n-r}-1 \in \Z_p^*/ 1+p^{n-r}\Z_p$ we get that $\mathcal{V}_r*\mathcal{V}_r$ is supported on $K_0$. For $g=y(p^{j})$,
\[(\alpha_s\alpha_t)^{-1}g \in K_0 \iff p^{j}st-p^r(t+1) \in p^n\Z_p.\]
If $j < r$, this is impossible. First assume that $r<j <n$, then the above equation holds if and only if
$p^{j-r}st - (t+1) \in p^{n-r}\Z_p$. Taking $t=p^{j-r}-1$ and $s = (1+ p^{n-j})t^{-1} \in \Z_p^*/ 1+p^{n-r}\Z_p$, we are done.
Now assume $j= r$. If $p>2$ then taking $t= p^{n-r}-2$ and $s=-1/t$ we are done, if $p=2$ no choice of $s,\ t$ works.
Thus we get that $\mathcal{V}_r*\mathcal{V}_r$ is supported on $K_0$ and $K_0y(p^j)K_0$ where if $p>2$ then $r \le j < n$ while for $p=2$ we have $r <j < n$.
Since $y(-p^r) \in K_0y(p^r)K_0$,
\[\mathcal{V}_r*\mathcal{V}_r(1) = \sum_{s \in \Z_p^*/ 1+p^{n-r}\Z_p}\mathcal{V}_r(y(-p^r)) = p^{n-r-1}(p-1).\]

For $r \le j < n$,
\[\mathcal{V}_r*\mathcal{V}_r(y(p^{j})) =\sum_{s \in \Z_p^*/ 1+p^{n-r}\Z_p}\mathcal{V}_r(y(-p^r)d(s)y(p^{j})).\]
We want to check for which $s$, there exists a matrix $A=\mat{a}{b}{c}{d} \in K_0$ such that
$y(-p^r)d(s)y(p^{j})Ay(-p^r) \in K_0 \text{ i.e., } (p^{j-r}-s^{-1})(a-bp^r)-d \in p^{n-r}\Z_p$.
If $r < j$ then for any $s \in \Z_p^*$ take $b=c=0$, $a= \frac{p^{n-r}-1}{p^{j-r}-s^{-1}},\ d=-1$, thus
$\mathcal{V}_r * \mathcal{V}_r(y(p^{j})) = p^{n-r-1}(p-1)$. If $p>2$ and $j=r$, it is easy to see that such an $A$ exists if and only if $s \not\in 1+p\Z_p$,
in this case take $b=c=0$ and $a= \frac{p^{n-r}-1}{1-s^{-1}},\ d=-1$. The number of $s \in \Z_p^*/ 1+p^{n-r}\Z_p$ such that
$s \not\in 1+p\Z_p$ is equal to $p^{n-r-1}(p-2)$ and so $\mathcal{V}_r*\mathcal{V}_r(y(p^{r})) = p^{n-r-1}(p-2)$.

For $(2)$, as before for $r+1 \le j<n$, we get that $\mathcal{V}_r* \mathcal{V}_{j}$ is supported at $g \in G$ if and only if there exists $s \in \Z_p^*/ 1+p^{n-r}\Z_p$ and
$t \in \Z_p^*/ 1+p^{n-j}\Z_p$ such that
\[\mat{\frac{1}{st}}{0}{\frac{-(p^rt+p^{j})}{st}}{1}g \in K_0.\]
It is easy to check that the above does not hold for $g=1,\ w(1),\ y(p^{i})$ for $i \ne r$. If $i=r$, taking $s=p^{j -r}+1$, $t=1$ we are done.
Similarly $\mathcal{V}_{j}* \mathcal{V}_r$ is supported only on $K_0y(p^r)K_0$.
Now
\[\mathcal{V}_r*\mathcal{V}_{j}(y(p^r)) =\sum_{s \in \Z_p^*/ 1+p^{n-r}\Z_p}\mathcal{V}_{j}(y(-p^r)d(s^{-1})y(p^r)),\]
so we want to count $s$, for which there exists $A=\mat{a}{b}{c}{d} \in K_0$ such that
$y(-p^r)d(s^{-1})y(p^r)Ay(-p^{j}) \in K_0 \text{ i.e., }(1-s^{-1})(a-bp^{j})-dp^{j-r} \in p^{n-r}\Z_p$,
which holds if and only if $s-1 \in p^{j-r}\Z_p^*$, in which case if $s-1= p^{j-r}u$ then taking $b=0$, $a=s$, $d=u$ we are done. Thus
$\mathcal{V}_r*\mathcal{V}_{j} = C_{j}\mathcal{V}_r$ where for $r+1 \le j < n$,
\[C_j = \#\{s \in \Z_p^*/ 1+p^{n-r}\Z_p : s-1 \in p^{j-r}\Z_p^*\}= (p-1)p^{n-j-1}.\]
For $\mathcal{V}_{j}*\mathcal{V}_{r}(y(p^r))$ we use that $K_0y(-p^r)K_0 = \bigsqcup_{s \in \Z_p^*/ 1+p^{n-r}\Z_p} d(s)y(-p^r)K_0$ to get
\[\mathcal{V}_{j}*\mathcal{V}_{r}(y(p^r))=\sum_{s \in \Z_p^*/ 1+p^{n-r}\Z_p}\mathcal{V}_{j}(y(p^r)d(s)y(-p^r)),\]
the calculations now follow as above.

For $(3)$,
\[\mathcal{V}_r* \mathcal{Y}_{r+1} = \mathcal{V}_r + (p-1)\mathcal{V}_r + (p-1)p\mathcal{V}_r + \cdots +(p-1)p^{n-r-2}\mathcal{V}_r \]
\[= \mathcal{V}_r + (p^{n-r-1} - 1)\mathcal{V}_r = p^{n-r-1} \mathcal{V}_r,\] the rest follows from $(1)$.
\end{proof}

For $1\le r\le n-1$, let $\mathcal{Y}_r$ be as before, i.e. $\mathcal{Y}_r = I + \sum_{j=r}^{n-1}\mathcal{V}_j$, take $\mathcal{Y}_n=I$. We have following
easy corollary.
\begin{cor}\label{cor:reln1}
\begin{enumerate}
 \item $\mathcal{Y}_{n-r}^2 =p^{r}\mathcal{Y}_{n-r}$ for all $0\le r\le n-1$.
 \item $\mathcal{Y}_r *\mathcal{Y}_l = p^{n-r}\mathcal{Y}_{l} = \mathcal{Y}_l*\mathcal{Y}_r$ for $r\ge l$.
\end{enumerate}
\end{cor}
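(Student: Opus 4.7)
The plan is to prove (1) by downward induction on $r$, then derive (2) from (1) using the relations in Proposition~\ref{prop:reln1}.

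For (1), I would reformulate as $\mathcal{Y}_r^2 = p^{n-r}\mathcal{Y}_r$ for $1 \le r \le n$. The base case $r=n$ is trivial since $\mathcal{Y}_n = I$. For the inductive step, write $\mathcal{Y}_r = \mathcal{V}_r + \mathcal{Y}_{r+1}$, so
\[
\mathcal{Y}_r^2 = \mathcal{V}_r^2 + \mathcal{V}_r*\mathcal{Y}_{r+1} + \mathcal{Y}_{r+1}*\mathcal{V}_r + \mathcal{Y}_{r+1}^2.
\]
Apply Proposition~\ref{prop:reln1}: the first term equals $p^{n-r-1}(p-1)\mathcal{Y}_{r+1} + p^{n-r-1}(p-2)\mathcal{V}_r$, the middle two terms together contribute $2p^{n-r-1}\mathcal{V}_r$, and the last term is $p^{n-r-1}\mathcal{Y}_{r+1}$ by the induction hypothesis. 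Collecting: the coefficient of $\mathcal{V}_r$ is $p^{n-r-1}(p-2)+2p^{n-r-1} = p^{n-r}$ and the coefficient of $\mathcal{Y}_{r+1}$ is $p^{n-r-1}(p-1)+p^{n-r-1} = p^{n-r}$, giving $\mathcal{Y}_r^2 = p^{n-r}(\mathcal{V}_r + \mathcal{Y}_{r+1}) = p^{n-r}\mathcal{Y}_r$.

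For (2), I would first establish the auxiliary identity $\mathcal{Y}_r * \mathcal{V}_j = p^{n-r}\mathcal{V}_j$ for $j < r$. Using $\mathcal{V}_i * \mathcal{V}_j = (p-1)p^{n-i-1}\mathcal{V}_j$ from Proposition~\ref{prop:reln1}(2) (for $i > j$, applied with the roles swapped and commutativity), compute
\[
\mathcal{Y}_r * \mathcal{V}_j = \mathcal{V}_j + \sum_{i=r}^{n-1}(p-1)p^{n-i-1}\mathcal{V}_j = \Bigl(1 + (p-1)\sum_{i=r}^{n-1}p^{n-i-1}\Bigr)\mathcal{V}_j.
\]
The geometric sum telescopes to $p^{n-r}-1$, giving the claim. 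Now for $r \ge l$, decompose $\mathcal{Y}_l = \mathcal{Y}_r + \sum_{j=l}^{r-1}\mathcal{V}_j$ and multiply by $\mathcal{Y}_r$ on the left:
\[
\mathcal{Y}_r * \mathcal{Y}_l = \mathcal{Y}_r^2 + \sum_{j=l}^{r-1}\mathcal{Y}_r*\mathcal{V}_j = p^{n-r}\mathcal{Y}_r + \sum_{j=l}^{r-1}p^{n-r}\mathcal{V}_j = p^{n-r}\mathcal{Y}_l,
\]
using (1) and the auxiliary identity. The equality with $\mathcal{Y}_l*\mathcal{Y}_r$ follows by the same argument applied on the other side, since all relations in Proposition~\ref{prop:reln1} are symmetric.

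The arguments are essentially bookkeeping; the only substantive check is the geometric-sum telescoping $1 + (p-1)(1+p+\cdots+p^{n-r-1}) = p^{n-r}$, which underlies both parts. No separate obstacle arises.
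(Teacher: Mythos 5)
Your proof is correct and follows essentially the same route as the paper's: induction on the index for (1) using the expansion $\mathcal{Y}_r=\mathcal{V}_r+\mathcal{Y}_{r+1}$ together with Proposition~\ref{prop:reln1}(1) and (3), and for (2) the decomposition $\mathcal{Y}_l=\mathcal{Y}_r+\sum_{j=l}^{r-1}\mathcal{V}_j$ combined with the auxiliary identity $\mathcal{Y}_r*\mathcal{V}_j=p^{n-r}\mathcal{V}_j$. The only difference is cosmetic re-indexing of the induction.
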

\begin{proof}
Note that $\mathcal{V}_{n-r}=\mathcal{Y}_{n-r}-\mathcal{Y}_{n-r+1}$ for all $1\le r\le n-1$.
Clearly $(1)$ holds for $r=0$. Assume that $\mathcal{Y}_{n-(a-1)}^2 =p^{a-1}\mathcal{Y}_{n-(a-1)}$. Then
using lemma~\ref{prop:reln1}
\begin{equation*}
\begin{split}
& \quad \mathcal{Y}_{n-a}^2 = (\mathcal{Y}_{n-(a-1)}+\mathcal{V}_{n-a})(\mathcal{Y}_{n-(a-1)}+\mathcal{V}_{n-a})\\
&= \mathcal{Y}_{n-(a-1)}^2 + 2\mathcal{Y}_{n-(a-1)}\mathcal{V}_{n-a}+ \mathcal{V}_{n-a}^2\\
&=p^{a-1}\mathcal{Y}_{n-(a-1)}+ 2p^{a-1}\mathcal{V}_{n-a} + (p-1)p^{a-1}\mathcal{Y}_{n-(a-1)} +(p-2)p^{a-1}\mathcal{V}_{n-a}\\
&=p^{a}\mathcal{Y}_{n-(a-1)}+ p^{a}\mathcal{V}_{n-(a-1)}\\
&=p^{a}\mathcal{Y}_{n-a}.
\end{split}
\end{equation*}
Similarly for $(2)$, let $r=l+m$ for some $m\ge 0$. Then
\[\mathcal{Y}_r*\mathcal{Y}_l = \mathcal{Y}_r*(\mathcal{V}_l + \mathcal{V}_{l+1}+\mathcal{V}_{l+2}+ \cdots \mathcal{V}_{l+m-1} +\mathcal{Y}_r).\]
Now for $0 \le j \le m-1$,
\begin{equation*}
\begin{split}
\mathcal{Y}_r*\mathcal{V}_{l+j} &= \mathcal{V}_{l+j} + \sum_{i=r}^{n-1} \mathcal{V}_i*\mathcal{V}_{l+j} =\mathcal{V}_{l+j} + \sum_{i=r}^{n-1} (p-1)p^{n-i-1}\mathcal{V}_{l+j}\\
&=\mathcal{V}_{l+j}+ \mathcal{V}_{l+j}(p^{n-r}-1) = p^{n-r}\mathcal{V}_{l+j}.
 \end{split}
\end{equation*}
Hence
\[\mathcal{Y}_r*\mathcal{Y}_l = p^{n-r}(\mathcal{V}_{l} + \mathcal{V}_{l+1} +\cdots + \mathcal{V}_{l+m-1} +\mathcal{Y}_r) = p^{n-r}\mathcal{Y}_l.\]
\end{proof}
In the next proposition, we obtain relations for $\mathcal{U}_0$.
\begin{prop} \label{prop:reln2}
\begin{enumerate}
\item $\mathcal{U}_0*\mathcal{U}_0 = p^{n-1}(p-1)\mathcal{U}_0 + p^n\mathcal{Y}_1$.
\item $\mathcal{U}_0*\mathcal{Y}_r = p^{n-r}\mathcal{U}_0 = \mathcal{Y}_r*\mathcal{U}_0$ for all $1\le r\le n$.
\item $\mathcal{U}_0*(\mathcal{U}_0-p^n)*(\mathcal{U}_0+p^{n-1})=0$.
\end{enumerate}
\end{prop}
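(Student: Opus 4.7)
The approach is to compute both convolutions directly from Lemma~\ref{lem:rel2} after establishing a left-coset decomposition of the support of $\mathcal{U}_0$, then to deduce (3) algebraically from (1) and (2).

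The starting point is the decomposition
\[
K_0\, w(1)\, K_0 \;=\; \bigsqcup_{s \in \Z_p/p^n \Z_p} x(s)\, w(1)\, K_0,
\]
which follows from the triangular factorization of $K_0$ together with the fact that $w(1)^{-1} K_0 w(1)$ consists of the matrices in $K$ whose $(1,2)$ entry lies in $p^n\Z_p$, so representatives for $K_0/(K_0 \cap w(1) K_0 w(1)^{-1})$ can be chosen as $\{x(s) : s\in \Z_p/p^n\Z_p\}$.  In particular $\mu(K_0 w(1) K_0)=p^n$.

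For (1), since $\mathcal{U}_0 * \mathcal{U}_0$ is supported in $K$, Lemma~\ref{L:reps2} reduces the check to $h\in\{1,\, w(1),\, y(p^j) : 1\le j\le n-1\}$.  Lemma~\ref{lem:rel2} gives
\[
\mathcal{U}_0 * \mathcal{U}_0(h) \;=\; \sum_{s\in \Z_p/p^n\Z_p} \mathcal{U}_0\!\left(\mat{0}{1}{-1}{s} h\right).
\]
At $h=1$ the argument equals $w(1)z(-1)x(-s)\in K_0 w(1) K_0$ for every $s$, contributing $p^n$.  At $h=w(1)$ the argument simplifies to $y(s)$, and the Cartan/Bruhat analysis (via the coset decomposition of Lemmas~\ref{lem:lem1p^r} and~\ref{lem:lem2p^r}) shows $y(s)\in K_0 w(1) K_0$ iff $v_p(s)=0$, contributing $(p-1)p^{n-1}$.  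At $h=y(p^j)$ the argument is $\mat{p^j}{1}{-1+sp^j}{s}$, whose $(2,1)$ entry is a unit; since every element of $K_0 y(p^i) K_0$ has $(2,1)$-valuation at least $i\ge 1$, this forces the matrix into $K_0 w(1) K_0$ for every $s$, contributing $p^n$.  Matching these three values against $p^{n-1}(p-1)\mathcal{U}_0+p^n\mathcal{Y}_1$ proves (1).

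For (2), since $\mathcal{U}_0 * I=\mathcal{U}_0$, it suffices to compute $\mathcal{U}_0 * \mathcal{V}_j$ for $r\le j\le n-1$.  Applying the same formula with $\mathcal{V}_j$ on the right, the $h=1$ and $h=y(p^i)$ terms vanish because (by the first paragraph) the arguments sit in $K_0 w(1) K_0$, while at $h=w(1)$ the argument is $y(s)$, so
\[
\mathcal{U}_0 * \mathcal{V}_j(w(1)) \;=\; \#\{s\in \Z_p/p^n\Z_p : v_p(s)=j\} \;=\; (p-1)p^{n-j-1}.
\]
Hence $\mathcal{U}_0 * \mathcal{V}_j=(p-1)p^{n-j-1}\mathcal{U}_0$, and summing the geometric series yields $\mathcal{U}_0 * \mathcal{Y}_r = p^{n-r}\mathcal{U}_0$.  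For the right-hand identity, apply the anti-involution $X_g\mapsto X_{g^{-1}}$; it satisfies $(A*B)^\ast=B^\ast*A^\ast$ and fixes $\mathcal{U}_0$ (because $w(1)^{-1}=w(1)z(-1)$) as well as each $\mathcal{V}_j$ (because $d(-1)y(p^j)d(-1)=y(-p^j)$ with $d(-1)\in K_0$).  Taking $\ast$ of $\mathcal{U}_0 * \mathcal{Y}_r=p^{n-r}\mathcal{U}_0$ therefore gives $\mathcal{Y}_r * \mathcal{U}_0=p^{n-r}\mathcal{U}_0$.

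For (3), multiply (1) on the left by $\mathcal{U}_0$ and use (2) with $r=1$:
\[
\mathcal{U}_0^{\ast 3} \;=\; p^{n-1}(p-1)\,\mathcal{U}_0^{\ast 2} + p^n\,(\mathcal{U}_0 * \mathcal{Y}_1) \;=\; p^{n-1}(p-1)\,\mathcal{U}_0^{\ast 2} + p^{2n-1}\,\mathcal{U}_0,
\]
which is exactly $\mathcal{U}_0 *(\mathcal{U}_0-p^n)*(\mathcal{U}_0+p^{n-1})=0$.  The main obstacle is the case analysis in (1), specifically verifying at $h=y(p^j)$ that $\mat{p^j}{1}{-1+sp^j}{s}$ lies in $K_0 w(1) K_0$ for \emph{every} $s$; this amounts to solving the congruence $t(-1+sp^j)\equiv p^j\pmod{p^n}$ to exhibit the unique $x(t)w(1)K_0$ coset containing it.
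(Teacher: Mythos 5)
Your proof is correct and follows essentially the same route as the paper: decompose $K_0w(1)K_0$ into the left cosets $x(s)w(1)K_0$, apply Lemma~\ref{lem:rel2}, and evaluate at the representatives $1$, $w(1)$, $y(p^j)$, with the $(2,1)$-entry valuation distinguishing the double cosets. The only (harmless) variation is that you obtain $\mathcal{Y}_r*\mathcal{U}_0$ from $\mathcal{U}_0*\mathcal{Y}_r$ via the anti-involution $X_g\mapsto X_{g^{-1}}$ rather than by repeating the computation on the other side, which is a clean shortcut.
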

\begin{proof}
Note that
\[K_0w(1)K_0 = \bigsqcup_{s \in \Z_p/p^nZ_p} \alpha_sK_0 \quad \text{where $\alpha_s =x(s)w(1)$}.\]
To compute $\mathcal{U}_0*\mathcal{U}_0$ need to check if it supported on $1$, $w(1)$ and
$y(p^{j})$ for $1\le j \le n-1$, i.e. need to check if there exists $s,\ t$ such that
\[(\alpha_s\alpha_t)^{-1} g = \mat{-1}{s}{-t}{st-1}g \in K_0.\]
For $g=1$ taking $s=t=0$, for $g=w(1)$ taking $s=t=1$ and for $g=y(p^{j})$, taking $s=p^{n-j}$, $t=-p^{j}$ we get that
$\mathcal{U}_0*\mathcal{U}_0$ is supported on $K_0$, $K_0w(1)K_0$ and $K_0y(p^j)K_0$ for all $1 \le j \le n-1$.
Clearly $\mathcal{U}_0*\mathcal{U}_0(1) = p^n$. Doing similar calculations as before we get that
\[\mathcal{U}_0*\mathcal{U}_0(w(1))=\# \{s \in \Z_p/p^nZ_p : s \not\in p\Z_p\} = p^{n-1}(p-1),\]
and
\[\mathcal{U}_0*\mathcal{U}_0(y(p^{j})) = p^n \quad \text{ for $1 \le j \le n-1$}. \]
Thus
\[\mathcal{U}_0*\mathcal{U}_0 = p^{n-1}(p-1) \mathcal{U}_0 + p^n(I +\mathcal{V}_1 + \cdots \mathcal{V}_{n-1} ) = p^{n-1}(p-1) \mathcal{U}_0 + p^n\mathcal{Y}_1.\]

Similarly we can check that for each $1\le j\le n-1$,
$\mathcal{U}_0*\mathcal{V}_j$ and $\mathcal{V}_j*\mathcal{U}_0$ are supported only on $K_0w(1)K_0$ and that
\[\mathcal{U}_0*\mathcal{V}_j = \mathcal{V}_j*\mathcal{U}_0 =(p-1)p^{n-j-1}\mathcal{U}_0\]
which implies $(2)$.

The statement $(3)$ now follows using $(1)$ and $(2)$.
\end{proof}
Thus we have the following theorem.
\begin{thm}
The algebra $H(K//K_0(p^n))$ is an $n+1$ dimensional commutative algebra 
with generators $\{\mathcal{U}_0,\ \mathcal{Y}_1,\ \mathcal{Y}_2,\ \ldots,\ \mathcal{Y}_{n} \}$ and
relations given by Corollary~\ref{cor:reln1} and Proposition~\ref{prop:reln2}.
\end{thm}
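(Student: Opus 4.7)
The plan is to verify three things in sequence: that the algebra has dimension exactly $n+1$ with $\{\mathcal{U}_0,\mathcal{Y}_1,\ldots,\mathcal{Y}_n\}$ forming a basis; that it is commutative; and that the relations already established in Corollary~\ref{cor:reln1} and Proposition~\ref{prop:reln2} provide a complete multiplication table on this basis.

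For the dimension and the basis claim, I would start from Lemma~\ref{L:reps2}, which supplies exactly $n+1$ double coset representatives $1,w(1),y(p),\ldots,y(p^{n-1})$. Hence $\{I,\mathcal{U}_0,\mathcal{V}_1,\ldots,\mathcal{V}_{n-1}\}$ is a basis for $H(K//K_0(p^n))$, and $\dim H(K//K_0(p^n))=n+1$. Using $\mathcal{Y}_n=I$ and $\mathcal{V}_r=\mathcal{Y}_r-\mathcal{Y}_{r+1}$ for $1\le r\le n-1$, the change-of-basis matrix from $\{I,\mathcal{V}_{n-1},\ldots,\mathcal{V}_1\}$ to $\{\mathcal{Y}_n,\mathcal{Y}_{n-1},\ldots,\mathcal{Y}_1\}$ is unitriangular, hence invertible. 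Together with $\mathcal{U}_0$ (which is supported on the remaining double coset $K_0 w(1)K_0$ and is therefore independent of all the $\mathcal{Y}_r$'s), we conclude that $\{\mathcal{U}_0,\mathcal{Y}_1,\ldots,\mathcal{Y}_n\}$ is a basis.

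For commutativity, the relevant relations are already in hand: Corollary~\ref{cor:reln1}(2) gives $\mathcal{Y}_r*\mathcal{Y}_l=\mathcal{Y}_l*\mathcal{Y}_r$, and Proposition~\ref{prop:reln2}(2) gives $\mathcal{U}_0*\mathcal{Y}_r=\mathcal{Y}_r*\mathcal{U}_0$. Since the generators pairwise commute, the whole algebra is commutative.

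Finally, to justify the phrase \emph{generators and relations}, I would note that every product of two basis elements has been explicitly computed as a linear combination of the basis $\{\mathcal{U}_0,\mathcal{Y}_1,\ldots,\mathcal{Y}_n\}$: the products $\mathcal{Y}_r*\mathcal{Y}_l$ are handled by Corollary~\ref{cor:reln1}, the mixed products $\mathcal{U}_0*\mathcal{Y}_r$ and $\mathcal{Y}_r*\mathcal{U}_0$ by Proposition~\ref{prop:reln2}(2), and $\mathcal{U}_0*\mathcal{U}_0$ by Proposition~\ref{prop:reln2}(1). These relations therefore fully determine the multiplication table and hence the algebra. Since all the substantive computations have already been carried out, there is no real obstacle here; the proof is essentially an assembly of the previous results, and the only point that deserves a sentence of justification is the linear independence of $\{\mathcal{U}_0,\mathcal{Y}_1,\ldots,\mathcal{Y}_n\}$ via the unitriangular change of basis.
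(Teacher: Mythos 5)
Your proposal is correct and follows essentially the same route as the paper, which states the theorem with no separate proof ("Thus we have the following theorem") as an immediate assembly of Lemma~\ref{L:reps2}, Corollary~\ref{cor:reln1} and Proposition~\ref{prop:reln2}. Your added remark on the unitriangular change of basis from $\{I,\mathcal{V}_1,\ldots,\mathcal{V}_{n-1}\}$ to $\{\mathcal{Y}_n,\ldots,\mathcal{Y}_1\}$ is a correct and worthwhile explicit justification of a point the paper leaves implicit.
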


We should point out that we have not yet found an analogue of Theorem~\ref{thm:thm1Hecke} for $H(G//K_0(p^n))$ for $n \ge 2$. However
we would need the following relation later. Let $\mathcal{T}_m = X_{d(p^m)}$, $\mathcal{U}_m = X_{w(p^m)}$, $\mathcal{Z}= X_{z(p)}$ be the elements in $H(G//K_0(p^n))$. Then
\begin{lem}\label{lem:rel5}
$(\mathcal{T}_1)^m *\mathcal{U}_m = \mathcal{T}_m *\mathcal{U}_m = \mathcal{Z}^m * \mathcal{U}_0$ for all $m \le n$.
\end{lem}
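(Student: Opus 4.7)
The plan is to observe that the key matrix identity
\[ d(p^m)w(p^m) \;=\; \mat{p^m}{0}{0}{1}\mat{0}{-1}{p^m}{0} \;=\; \mat{0}{-p^m}{p^m}{0} \;=\; z(p^m)w(1) \]
reduces everything to a direct application of Lemma~\ref{lem:rel1}. Concretely, I would establish all three equalities as instances of the identity $X_g * X_h = X_{gh}$ whose hypothesis amounts to the multiplicativity of $\mu$ on the relevant double cosets.

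Step one is to compute the five indices $[K_0 : K_0 \cap g K_0 g^{-1}]$ for $g=d(p^m)$, $w(p^m)$, $z(p^m)$, $w(1)$, and $z(p^m)w(1)$. Conjugating a typical element $\mat{a}{b}{c}{d}$ of $K_0=K_0(p^n)$ by each $g^{-1}$ and demanding the result again lie in $K_0$ yields respectively the conditions $b\in p^m\Z_p$; $b\in p^{n-m}\Z_p$; no condition; $b\in p^n\Z_p$; and $b\in p^n\Z_p$. (The hypothesis $m\le n$ makes the condition on $c$ automatic in every case, since originally $c\in p^n\Z_p$.) This gives
\[
\mu(K_0 d(p^m)K_0)=p^m,\quad \mu(K_0 w(p^m)K_0)=p^{n-m},\quad \mu(K_0 z(p^m)K_0)=1,
\]
\[
\mu(K_0 w(1)K_0)=p^n,\quad \mu(K_0 z(p^m)w(1)K_0)=p^n.
\]

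Step two is to read off the two multiplicativity checks needed:
\[
\mu(K_0 d(p^m)K_0)\,\mu(K_0 w(p^m)K_0)=p^m\cdot p^{n-m}=p^n=\mu(K_0\, d(p^m)w(p^m)\,K_0),
\]
\[
\mu(K_0 z(p^m)K_0)\,\mu(K_0 w(1)K_0)=1\cdot p^n=p^n=\mu(K_0\, z(p^m)w(1)\,K_0),
\]
so Lemma~\ref{lem:rel1} gives $\mathcal{T}_m*\mathcal{U}_m=X_{d(p^m)w(p^m)}$ and $\mathcal{Z}^m*\mathcal{U}_0=X_{z(p^m)w(1)}$, and the matrix identity above shows these characteristic functions coincide. (Here I am using $\mathcal{Z}^m=X_{z(p^m)}$, which follows by the same argument applied $m$ times since $z(p)$ is central and hence $\mu(K_0 z(p^k)K_0)=1$ for every $k$.)

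Finally, to identify $(\mathcal{T}_1)^m$ with $\mathcal{T}_m$, I apply Lemma~\ref{lem:rel1} inductively: since $\mu(K_0 d(p)K_0)=p$ and $\mu(K_0 d(p^k)K_0)=p^k$ from step one, the hypothesis $p\cdot p^{k-1}=p^k$ holds for every $1\le k\le m$, whence $\mathcal{T}_1*\mathcal{T}_{k-1}=\mathcal{T}_k$ and by induction $(\mathcal{T}_1)^m=\mathcal{T}_m$. This gives the remaining equality $(\mathcal{T}_1)^m*\mathcal{U}_m=\mathcal{T}_m*\mathcal{U}_m$, completing the proof. There is no real obstacle here: everything reduces to the stabilizer computations of step one, and the constraint $m\le n$ is used exactly to ensure that conjugation by $w(p^m)^{-1}$ (equivalently by $d(p^m)^{-1}w(p^m)^{-1}$) does not impose a strictly stronger condition on $c$ than the one already built into $K_0(p^n)$.
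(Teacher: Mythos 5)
Your proof is correct and follows essentially the same route as the paper: the paper likewise deduces the lemma from Lemma~\ref{lem:rel1} together with the coset counts $\mu(K_0d(p^m)K_0)=p^m$ and $\mu(K_0w(p^r)K_0)=p^{n-r}$ for $r\le n$ (stated there via explicit coset decompositions rather than your equivalent stabilizer-index computation). The matrix identity $d(p^m)w(p^m)=z(p^m)w(1)$ and the induction giving $(\mathcal{T}_1)^m=\mathcal{T}_m$ are exactly the implicit content of the paper's ``follows as before.''
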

\begin{proof}
The proof follows as before by using Lemma~\ref{lem:rel1} and since
\[
K_0(p^n) d(p^m) K_0(p^n) = \bigsqcup_{s\in \Z_p / p^{m}\Z_p}x(s)d(p^m)K_0(p^n) \quad \text{for $m \ge 0$},
\]
and
\[
K_0(p^n) w(p^r) K_0(p^n) = \bigsqcup_{s\in \Z_p / p^{n-r}\Z_p}x(s)w(p^r)K_0(p^n) \quad \text{for $r \le n$}.
\]
\end{proof}

\subsection{Representations of $K$ having a $K_0(p^n)$ fixed vector} 
In this section we recall some results of 
Casselman~\cite{Casselman}~\cite{Casselman2}.
We are interested in irreducible representations of
$K$ having a $K_0(p^n)$ fixed vector. Let
\[I(n) := Ind^K_{K_0(p^n)} 1 = \{ \phi:K\rightarrow \C : \phi(k_0k)=\phi(k) \text{ for } k_0\in K_0(p^n),\ k \in K\}.\]
Then $I(n)$ is a right representation of $K$, via right translation, denoted by $\pi_R$, where
$\pi_R(k)(\phi)(k')=\phi(k'k)$, and the dimension of this representation is $[K:K_0(p^n)] = p^{n-1}(p+1)$.
It follows from Frobenius Reciprocity that every (smooth) irreducible
representation of $K$ which has a nonzero $K_0(p^n)$ fixed vector
is isomorphic to a subrepresentation of $I(n)$.
We shall therefore decompose $I(n)$ into sum of
irreducible representations.

The following lemma is clear.
\begin{lem}\label{lem:represen1}
We have $I(n)^{K_0(p^n)} =  H(K//K_0(p^n))$ and consequently the dimension of $I(n)^{K_0(p^n)}$ is $n+1$.
\end{lem}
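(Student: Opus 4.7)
The plan is to unfold both sides as concrete function spaces on $K$ and observe they coincide, then count dimensions via the double coset decomposition already established.

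First I would recall that by definition $I(n)$ consists of functions $\phi\colon K\to\C$ satisfying $\phi(k_0 k)=\phi(k)$ for every $k_0\in K_0(p^n)$ and $k\in K$, with $K$ acting by right translation $\pi_R(k)\phi(k')=\phi(k'k)$. Hence an element $\phi\in I(n)$ lies in $I(n)^{K_0(p^n)}$ if and only if $\phi(k k_0')=\phi(k)$ for all $k_0'\in K_0(p^n)$ as well. Combining the two invariance conditions, $\phi$ is exactly a $K_0(p^n)$-bi-invariant function on $K$. Since $K$ is compact such a function is automatically compactly supported and locally constant (it descends to the finite double coset space $K_0(p^n)\backslash K/K_0(p^n)$), so $\phi\in H(K//K_0(p^n))$. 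Conversely every element of $H(K//K_0(p^n))$ is supported on $K$ and bi-$K_0(p^n)$-invariant, hence gives an element of $I(n)^{K_0(p^n)}$. This establishes the identification $I(n)^{K_0(p^n)}=H(K//K_0(p^n))$.

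For the dimension statement I would note that $H(K//K_0(p^n))$ has as a $\C$-basis the characteristic functions $X_g$ of the double cosets $K_0(p^n)gK_0(p^n)$ with $g\in K$. By Lemma~\ref{L:reps2}, a complete set of representatives for $K_0(p^n)\backslash K/K_0(p^n)$ is
\[1,\ w(1),\ y(p),\ y(p^2),\ \ldots,\ y(p^{n-1}),\]
which is a list of $n+1$ elements. Consequently $\dim_\C H(K//K_0(p^n))=n+1$, and the identification above yields $\dim_\C I(n)^{K_0(p^n)}=n+1$.

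There is really no obstacle here; the only thing to watch is the bookkeeping between left/right invariance and the two actions of $K_0(p^n)$ on $I(n)$, and the fact that the basis elements $1,\mathcal{U}_0,\mathcal{V}_1,\ldots,\mathcal{V}_{n-1}$ already introduced in the preceding subsection are exactly the $X_g$ for the $n+1$ double coset representatives of Lemma~\ref{L:reps2}, which is a consistency check rather than a real step in the proof.
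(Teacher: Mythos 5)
Your proof is correct and is exactly the argument the paper has in mind: the paper simply declares this lemma ``clear,'' and the content is precisely your identification of $\pi_R(K_0(p^n))$-fixed vectors in $I(n)$ with $K_0(p^n)$-bi-invariant functions on $K$, followed by counting the $n+1$ double cosets from Lemma~\ref{L:reps2}. No issues.
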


Using induction argument and Frobenius reciprocity we obtain following 
well-known results. 
\begin{prop}\label{prop:represen1}
The representation $I(n)$ is a sum of $n+1$
distinct irreducible representations.
\end{prop}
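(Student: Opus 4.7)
The plan is to compute $\End_K(I(n))$ in two ways and combine the computations with the commutativity of $H(K//K_0(p^n))$ established in the preceding theorem. For brevity, write $K_0 = K_0(p^n)$.

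Since $I(n)$ is a finite-dimensional smooth representation of the compact group $K = \GL_2(\Z_p)$, it is semisimple (the action factors through the finite quotient $K/(I+p^n M_2(\Z_p))$, on which Maschke's theorem applies). Decompose
\[ I(n) = \bigoplus_i \sigma_i^{\oplus m_i} \]
with pairwise non-isomorphic irreducibles $\sigma_i$ and multiplicities $m_i \geq 1$. By Frobenius reciprocity applied to $I(n) = \mathrm{Ind}_{K_0}^K \mathbf{1}$,
\[ m_i = \dim \mathrm{Hom}_K(\sigma_i, I(n)) = \dim \sigma_i^{K_0}, \]
and therefore $\End_K(I(n)) \cong \bigoplus_i M_{m_i}(\C)$, an algebra of dimension $\sum_i m_i^2$.

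On the other hand, the standard identification for representations induced from the trivial character of an open subgroup yields an algebra isomorphism $\End_K(I(n)) \cong H(K//K_0)$: a $K_0$-bi-invariant function on $K$ acts on $I(n)$ by convolution, and every $K$-equivariant endomorphism arises uniquely this way. By the preceding theorem, $H(K//K_0)$ is commutative of dimension $n+1$. Since a finite direct sum $\bigoplus_i M_{m_i}(\C)$ of complex matrix algebras is commutative if and only if every block is $1\times 1$, we are forced to have $m_i = 1$ for every $i$. Thus $I(n)$ is multiplicity-free, and the number of distinct irreducible constituents equals $\dim \End_K(I(n)) = n+1$.

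The main obstacle is setting up the identification $\End_K(I(n)) \cong H(K//K_0)$ cleanly; although this is standard for representations induced from the trivial character, it is the structural bridge that converts the commutativity of the Hecke algebra (which the paper has just worked out explicitly through generators and relations) into multiplicity-freeness of $I(n)$. The remaining ingredients — semisimplicity of smooth finite-dimensional representations of a compact group, Frobenius reciprocity, and the elementary fact about commutative products of matrix algebras — are routine.
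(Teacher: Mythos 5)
Your argument is correct, but it closes the proof by a different mechanism than the paper does. Both proofs share the same central computation: by Frobenius reciprocity and Lemma~\ref{lem:represen1}, $\mathrm{End}_K(I(n))\cong I(n)^{K_0(p^n)}=H(K//K_0(p^n))$ has dimension $n+1$, so $\sum_i m_i^2=n+1$. The paper then rules out multiplicities by induction on $n$: the embedding $I(n-1)\hookrightarrow I(n)$ together with $\dim I(n)>\dim I(n-1)$ forces at least $n+1$ constituents counted with multiplicity, and $\sum_i m_i^2=n+1\ge\sum_i m_i\ge n+1$ squeezes all $m_i$ to $1$. You instead invoke the commutativity of $H(K//K_0(p^n))$ (from the generators-and-relations theorem proved just before) to conclude that $\bigoplus_i M_{m_i}(\C)$ is commutative, hence multiplicity-free, with no induction at all. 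Your route is cleaner in that it makes visible exactly what the explicit Hecke-algebra computation buys --- commutativity is equivalent to multiplicity-freeness here --- but it leans on that harder result, whereas the paper's induction needs only the dimension count and the elementary inequality $\sum m_i^2\ge\sum m_i$; the induction also yields for free the fact (used in Corollary~\ref{cor:represen1}) that exactly $n$ of the $n+1$ constituents already occur in $I(n-1)$, which your argument does not directly provide. One small caveat: the commutativity theorem you cite is stated for $n\ge 2$, so for completeness you should note that the cases $n=0,1$ are covered by $H(K//K)=\C$ and by Lemma~\ref{lem:gen1}, which shows $H(K//K_0(p))$ is two-dimensional, spanned by $I$ and $\mathcal{U}_0$, hence commutative.
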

\begin{cor}\label{cor:represen1}
Let $n\geq 0$. There exists a unique irreducible representation $\sigma(n)$ of $K$ such that $\sigma(n)$
has a $K_0(p^n)$ fixed vector and such that $\sigma(n)$ does not have a $K_0(p^k)$ fixed vector
for $k<n$. Further, $\sigma(n)$ has a unique $K_0(p^n)$ fixed vector up to scalar multiplication and
the dimension of $\sigma(n)$ is given by: $\text{dim}(\sigma(0))=1$, $\text{dim}(\sigma(1))=p$ and
$\text{dim}(\sigma(n))=p^{n-2}(p^2-1)$ for $n\geq 2$.
\end{cor}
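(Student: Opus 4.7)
The plan is to bootstrap off Proposition~\ref{prop:represen1}, which gives that $I(n)$ is a multiplicity-free sum of exactly $n+1$ distinct irreducibles of $K$. The key observation is that the inclusion $K_{0}(p^{n})\subset K_{0}(p^{n-1})$ induces a $K$-embedding $I(n-1)\hookrightarrow I(n)$; so the $n$ distinct irreducibles appearing in $I(n-1)$ are $n$ of the $n+1$ irreducibles appearing in $I(n)$. I would define $\sigma(n)$ to be the unique new constituent, i.e., the one occurring in $I(n)$ but not in $I(n-1)$. By construction $\sigma(n)$ has a nonzero $K_{0}(p^{n})$-fixed vector.

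Next I would verify the two characterizing properties. To show $\sigma(n)$ has no $K_{0}(p^{k})$-fixed vector for $k<n$: if it did, then by Frobenius reciprocity $\sigma(n)$ would embed in $I(k)$, which in turn embeds in $I(n-1)$, contradicting the choice of $\sigma(n)$ as the new summand. For uniqueness of $\sigma(n)$ as a representation with this property: any irreducible representation $\tau$ of $K$ having a $K_{0}(p^{n})$-fixed vector embeds into $I(n)$ (again by Frobenius), so $\tau$ is one of the $n+1$ summands; if moreover $\tau$ has no $K_{0}(p^{k})$-fixed vector for $k<n$, the same argument shows $\tau$ is not among the $n$ summands inherited from $I(n-1)$, hence $\tau\cong\sigma(n)$. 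Uniqueness of the $K_{0}(p^{n})$-fixed vector up to scalar is the statement that $\dim\sigma(n)^{K_{0}(p^{n})}=1$, which follows from Frobenius reciprocity together with the fact that $\sigma(n)$ appears in $I(n)$ with multiplicity one.

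Finally, the dimensions fall out by inductive difference. Since $I(n)=\bigoplus_{k=0}^{n}\sigma(k)$, we have $\dim\sigma(n)=\dim I(n)-\dim I(n-1)=[K:K_{0}(p^{n})]-[K:K_{0}(p^{n-1})]$. Using $[K:K_{0}(p^{n})]=p^{n-1}(p+1)$ for $n\ge 1$ and $[K:K_{0}(1)]=1$, one gets $\dim\sigma(0)=1$, $\dim\sigma(1)=(p+1)-1=p$, and for $n\ge 2$
\[
\dim\sigma(n)=p^{n-1}(p+1)-p^{n-2}(p+1)=p^{n-2}(p+1)(p-1)=p^{n-2}(p^{2}-1).
\]

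There is essentially no hard step here once Proposition~\ref{prop:represen1} is in hand; the only subtle point is invoking the embedding $I(n-1)\hookrightarrow I(n)$ correctly and recognizing, via Frobenius reciprocity, that the property of possessing a $K_{0}(p^{k})$-fixed vector is equivalent to being a constituent of $I(k)$.
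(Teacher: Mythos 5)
Your argument is correct and is essentially the paper's own proof: both define $\sigma(n)$ as the unique constituent of $I(n)$ not occurring in $I(n-1)$, use Frobenius reciprocity for the characterization and the one-dimensionality of the fixed space, and obtain the dimensions as $\dim I(n)-\dim I(n-1)$. No issues.
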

We have the following theorem of Casselman. 
\begin{thm} (Casselman~\cite{Casselman})
Let $(\pi,V)$ be an irreducible admissible representation of
$G=\GL_2(\mathbb{Q}_p)$ with trivial central character. Let $n$ be
the minimal integer such that there exists a nonzero $K_0(p^n)$ 
fixed vector in $V$. Then this vector is unique up to a scalar.
\end{thm}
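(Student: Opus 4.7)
My plan is to reduce the uniqueness claim to a multiplicity-one assertion for the irreducible $K$-representation $\sigma(n)$ inside $V|_K$, and then to establish that bound using an explicit model for $\pi$. For the reduction I would restrict $\pi$ to $K$: admissibility of $\pi$ gives a $K$-isotypic decomposition $V|_K \cong \bigoplus_\sigma m_\sigma \sigma$ with finite multiplicities $m_\sigma$, so $V^{K_0(p^n)} = \bigoplus_\sigma m_\sigma \sigma^{K_0(p^n)}$. By Corollary~\ref{cor:represen1} only $\sigma(0), \sigma(1), \ldots, \sigma(n)$ can contribute. Minimality of $n$ forces $m_{\sigma(k)} = 0$ for all $k < n$, since any copy of such a $\sigma(k)$ would produce a nonzero $K_0(p^k)$-fixed vector in $V$. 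Hence $V^{K_0(p^n)}$ is precisely the $\sigma(n)$-isotypic component, and since $\dim \sigma(n)^{K_0(p^n)} = 1$ by the same corollary, the claim reduces to showing $m_{\sigma(n)} \leq 1$.

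For the multiplicity bound, my preferred approach is via the Kirillov model of $\pi$: realize $V$ as a subspace of $C^\infty(\Q_p^\times)$ on which the standard Borel acts by an explicit formula. Since $K_0(p^n)$ is generated by its intersection with the Borel together with the Atkin--Lehner element $w(p^n)$, the $K_0(p^n)$-invariance of a vector $f$ reduces to (i) a support condition on $f$ coming from $B \cap K_0(p^n)$-invariance, and (ii) a single functional equation coming from $w(p^n)$; one then shows directly that at the minimal conductor this system admits a one-dimensional solution space. An alternative Hecke-algebraic route treats $V^{K_0(p^n)}$ as a simple $H(G//K_0(p^n))$-module and combines the cubic relation $\mathcal{U}_0(\mathcal{U}_0 - p^n)(\mathcal{U}_0 + p^{n-1}) = 0$ of Proposition~\ref{prop:reln2} with Lemma~\ref{lem:rel5}, $\mathcal{T}_1^n * \mathcal{U}_n = \mathcal{Z}^n * \mathcal{U}_0$, to constrain the joint eigenstructure of $\pi(\mathcal{U}_0)$, $\pi(\mathcal{T}_1)$ and $\pi(\mathcal{U}_n)$ on the module.

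The main obstacle is carrying out the one-dimensionality verification uniformly across principal series, special (Steinberg), and supercuspidal representations. For principal series and special representations the Kirillov-model analysis amounts to an explicit calculation with an intertwining operator. For supercuspidal $\pi$, however, $\pi$ is not induced from a proper parabolic and the functional equation from $w(p^n)$ is not visible from any such induction; here one must either invoke the description of the new vector via local $\varepsilon$-factors, or alternatively trace carefully how the Hecke-algebra relations above force the module dimension down to one. This uniform treatment of all three classes is where the substantive work concentrates.
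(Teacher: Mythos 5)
Your reduction of the theorem to the bound $m_{\sigma(n)}\le 1$ is exactly the paper's first step: by Corollary~\ref{cor:represen1} the only irreducible $K$-type that can contribute to $V^{K_0(p^n)}$ without violating the minimality of $n$ is $\sigma(n)$, and that type has a one-dimensional space of $K_0(p^n)$-fixed vectors. The gap is in the second half. The paper obtains $m_{\sigma(n)}\le 1$ in one line, by invoking the fact that $(G,K)$ is a strong Gelfand pair, i.e.\ that the restriction of an irreducible admissible representation of $\GL_2(\Q_p)$ to $K=\GL_2(\Z_p)$ is multiplicity-free; this is precisely the content of the cited paper \cite{Casselman2}. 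You instead propose to re-derive the multiplicity bound from the Kirillov model, and you yourself concede that the supercuspidal case (where the action of $w(p^n)$ is not controlled by parabolic induction and one must bring in the local functional equation and $\varepsilon$-factors) is left unresolved; as written, that part is a statement of intent rather than a proof, and it is exactly where the content of the theorem lives.

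Your fallback Hecke-algebraic route does not close the gap either. It is true that $V^{K_0(p^n)}$ is a simple module over $H(G//K_0(p^n))$, but simplicity forces one-dimensionality only if the acting algebra is commutative, and the paper explicitly states that it has no presentation of $H(G//K_0(p^n))$ for $n\ge 2$ (only of the commutative subalgebra $H(K//K_0(p^n))$, which cannot distinguish multiple copies of $\sigma(n)$ inside $V|_K$). The relations you cite, namely the cubic relation for $\mathcal{U}_0$ in Proposition~\ref{prop:reln2} and the identity of Lemma~\ref{lem:rel5}, constrain the eigenvalues of particular operators but place no bound on $\dim V^{K_0(p^n)}$. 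To repair the argument along the paper's lines you need only quote the multiplicity-one theorem for the pair $(\GL_2(\Q_p),\GL_2(\Z_p))$; to repair it along your own lines you must actually carry out the Kirillov-model computation uniformly across principal series, special and supercuspidal representations, which is a substantially longer undertaking.
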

We shall now explicitly describe the irreducible subrepresentations of $I(n)$.
Let us consider the action $\pi_L$ of $H(K//K_0(p^n))$ on $I(n)$:\\
for $f \in H(K//K_0(p^n))$ and $\phi \in I(n)$ set
\[\pi_L(f)(\phi)(g) = \int_K f(k)\phi(k^{-1}g) dk \quad \text{ for all $g \in K$}.\]
In particular, if $\phi \in I(n)^{K_0(p^n)}$ which by Lemma~\ref{lem:represen1} is same as the algebra $H(K//K_0(p^n))$
then we have $\pi_L(f)(\phi) = f * \phi$. It is easy to check that the action $\pi_L$
commutes with the action $\pi_R$. It now follows by Schur's Lemma that for each $f \in H(K//K_0(p^n))$ the operator
$\pi_L(f)$ acts as a scalar operator on an irreducible subrepresentation of $I(n)$. We shall use this to distinguish the
irreducible components of $I(n)$ as follows.

If $\sigma$ is any irreducible subrepresentation of $I(n)$ then $\sigma$ contains a
$K_0(p^n)$ fixed vector, that is there exists a non-zero vector $v_{\sigma} \in \sigma \cap I(n)^{K_0(p^n)}$. Thus $v_\sigma$ is a linear
combination of $\mathcal{U}_0$ and $\mathcal{Y}_r$ for $1 \le r \le n$. 
Since $\pi_L(f)$ acts as a scalar for every $f \in H(K//K_0(p^n))$ the vector
$v_\sigma$ will be an eigenvector under the action of $\pi_L(\mathcal{U}_0)$ and $\pi_L(\mathcal{Y}_r)$ for all $1 \le r \le n$. For each $\sigma$ we can
compute these eigenvectors $v_\sigma$ and their corresponding eigenvalues using the relations in
Corollary~\ref{cor:reln1} and Proposition~\ref{prop:reln2}.
In fact we obtain the following proposition.
\begin{prop}\label{prop:represen2}
A basis of eigenvectors for $H(K//K_0(p^n))$ under the above action is given by:\\
$v_1 = \mathcal{U}_0+\mathcal{Y}_1$\\
$v_2 = \mathcal{U}_0 - p\mathcal{Y}_1$\\
$w_k = \mathcal{Y}_{k} - p\mathcal{Y}_{k+1}$ for $1 \le k \le n-1$,\\
with eigenvalues given by the following table:
\begin{table}[ht]
\begin{tabular}{c|ccccccccc}
    & $\mathcal{U}_0$ & $\mathcal{Y}_1$ & $\mathcal{Y}_2$ & $\mathcal{Y}_3$ & $\hdots$ & $\mathcal{Y}_{k}$ & $\hdots$ & $\mathcal{Y}_{n-1}$ & $\mathcal{Y}_{n}$  \\
[0.5ex]
\hline
$v_1$ & $p^n$ & $p^{n-1}$ & $p^{n-2}$ & $p^{n-3}$ & $\hdots$ & $p^{n-k}$ & $\hdots$ & $p$ & $1$ \\
$v_2$ & $-p^{n-1}$ & $p^{n-1}$ & $p^{n-2}$ & $p^{n-3}$ & $\hdots$ & $p^{n-k}$ & $\hdots$ & $p$ & $1$ \\
$w_1$ & $0$ & $0$ & $p^{n-2}$ & $p^{n-3}$ & $\hdots$ & $p^{n-k}$ & $\hdots$ & $p$ & $1$ \\
$w_2$ & $0$ & $0$ & $0$ & $p^{n-3}$ & $\hdots$ & $p^{n-k}$ & $\hdots$ & $p$ & $1$ \\
$\vdots$ & $\vdots$ & $\vdots$ & $\vdots$ & $\vdots$ & $\vdots$ & $\vdots$ & $\vdots$ & $\vdots$ & $\vdots$ \\
$w_k$ & $0$ & $0$ & $0$ & $0$ & $\hdots$ & $p^{n-k}$ & $\hdots$ & $p$ & $1$ \\
$\vdots$ & $\vdots$ & $\vdots$ & $\vdots$ & $\vdots$ & $\vdots$ & $\vdots$ & $\vdots$ & $\vdots$ & $\vdots$ \\
$w_{n-2}$ & $0$ & $0$ & $0$ & $0$ & $\hdots$ & $0$  & $\hdots$ & $p$ & $1$ \\
$w_{n-1}$ & $0$ & $0$ & $0$ & $0$ & $\hdots$ & $0$ & $\hdots$ & $0$ & $1$
\end{tabular}
\end{table}\\
where each entry of the table  at the intersection of the row $v$ and coloumn $F$ stands for the eigenvalue of the action of $F$ on $v$,
for example, $\mathcal{U}_0 * v_1 = p^n v_1$.
\end{prop}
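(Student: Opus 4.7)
The plan is to verify every entry of the eigenvalue table by direct substitution into the multiplicative relations of Corollary~\ref{cor:reln1} and Proposition~\ref{prop:reln2}, and then to observe that the $n+1$ eigenvalue rows are pairwise distinct, so the vectors $v_1, v_2, w_1, \ldots, w_{n-1}$ are linearly independent and must form a basis of the $(n+1)$-dimensional algebra $H(K//K_0(p^n))$ by Lemma~\ref{lem:represen1}.

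First I would handle the $\mathcal{U}_0$ column. Proposition~\ref{prop:reln2}(1)--(2) give $\mathcal{U}_0^2 = p^{n-1}(p-1)\mathcal{U}_0 + p^n\mathcal{Y}_1$ and $\mathcal{U}_0 * \mathcal{Y}_r = p^{n-r}\mathcal{U}_0$, whence
\[
\mathcal{U}_0 * v_1 = \bigl(p^{n-1}(p-1)+p^{n-1}\bigr)\mathcal{U}_0 + p^n\mathcal{Y}_1 = p^n v_1,
\]
\[
\mathcal{U}_0 * v_2 = \bigl(p^{n-1}(p-1)-p^n\bigr)\mathcal{U}_0 + p^n\mathcal{Y}_1 = -p^{n-1}v_2,
\]
and $\mathcal{U}_0 * w_k = p^{n-k}\mathcal{U}_0 - p\cdot p^{n-k-1}\mathcal{U}_0 = 0$ for $1 \le k \le n-1$.

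Second, for the $\mathcal{Y}_r$ columns, I would first distill from Corollary~\ref{cor:reln1}(1)--(2) and commutativity the single compact rule
\[
\mathcal{Y}_a * \mathcal{Y}_b = p^{n-\max(a,b)}\mathcal{Y}_{\min(a,b)}, \qquad 1 \le a,b \le n,
\]
where $\mathcal{Y}_n = I$. Combined with $\mathcal{Y}_r * \mathcal{U}_0 = p^{n-r}\mathcal{U}_0$ this instantly gives $\mathcal{Y}_r * v_i = p^{n-r}v_i$ for $i=1,2$ (both the $\mathcal{U}_0$ summand and the $\mathcal{Y}_1$ summand scale by $p^{n-r}$). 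For $w_k$ a short case split on $r$ versus $k+1$ is needed: if $r \ge k+1$ then both $\mathcal{Y}_k$ and $\mathcal{Y}_{k+1}$ are the smaller index, producing $\mathcal{Y}_r * w_k = p^{n-r}\mathcal{Y}_k - p \cdot p^{n-r}\mathcal{Y}_{k+1} = p^{n-r}w_k$; if $r \le k$ then $\mathcal{Y}_r * \mathcal{Y}_k = p^{n-k}\mathcal{Y}_r$ and $p \cdot \mathcal{Y}_r * \mathcal{Y}_{k+1} = p\cdot p^{n-k-1}\mathcal{Y}_r = p^{n-k}\mathcal{Y}_r$, so the two terms cancel and the eigenvalue is $0$.

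Finally, the eigenvalue rows are pairwise distinguishable: $v_1$ and $v_2$ differ in the $\mathcal{U}_0$-column, while the $w_k$ all have $\mathcal{U}_0$-eigenvalue zero but the smallest index $r$ with nonzero $\mathcal{Y}_r$-eigenvalue is exactly $r = k+1$, which identifies $k$ uniquely. Hence the $n+1$ eigenvectors are linearly independent and, by the dimension count from Lemma~\ref{lem:represen1}, form a basis. I do not foresee a real obstacle; the only nontrivial bookkeeping is the case split for $\mathcal{Y}_r * w_k$, which is routine once the compact product rule for $\mathcal{Y}_a * \mathcal{Y}_b$ has been isolated.
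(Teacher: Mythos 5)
Your proposal is correct and is essentially the paper's own argument: the paper offers no separate proof, stating only that the eigenvectors and eigenvalues are obtained by direct computation from the relations in Corollary~\ref{cor:reln1} and Proposition~\ref{prop:reln2}, which is exactly the verification you carry out (your compact rule $\mathcal{Y}_a * \mathcal{Y}_b = p^{n-\max(a,b)}\mathcal{Y}_{\min(a,b)}$ is just a convenient repackaging of that corollary). The concluding linear-independence/dimension count via Lemma~\ref{lem:represen1} is also the intended one.
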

\begin{cor}
The representation $I(n)$ is a sum of $n+1$ irreducible subspaces given by:
$S_1 = \Span(\pi_R(K)v_1)$, $S_2 = \Span(\pi_R(K)v_2)$ and $T_k = \Span(\pi_R(K)w_k)$
where $1\le k \le n-1$ such that $\mathrm{dim}(S_1) = 1$, $\mathrm{dim}(S_2) = p$,
$\mathrm{dim}(T_k) = p^{k-1}(p^2-1)$. By Corollary~\ref{cor:represen1},  
$T_{n-1} = \sigma_n$ and hence is the unique irreducible
representation of $K$ such that $T_{n-1}$ has a $K_0(p^n)$ fixed vector $w_{n-1}$
but does not have $K_0(p^k)$ fixed vector for $k<n$.
\end{cor}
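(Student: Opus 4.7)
The plan is to use the left action $\pi_L$ of $H(K//K_0(p^n))$ to separate the irreducible components of $I(n)$ via their unique $K_0(p^n)$-fixed vectors, and then match those vectors against the basis $v_1, v_2, w_1, \ldots, w_{n-1}$ produced in Proposition~\ref{prop:represen2}.

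Since $\pi_L$ commutes with $\pi_R$, Schur's lemma forces each $\pi_L(f)$ to act by a scalar on every irreducible $\pi_R$-subrepresentation of $I(n)$. Hence any simultaneous eigenvector $v \in I(n)^{K_0(p^n)}$ for the algebra $\pi_L(H(K//K_0(p^n)))$ has its $\pi_R(K)$-span contained in a single irreducible constituent, namely the unique one whose (one-dimensional) $K_0(p^n)$-fixed line carries the matching eigencharacter. Proposition~\ref{prop:represen2} supplies $n+1$ such eigenvectors with pairwise distinct eigenvalue tuples, and Proposition~\ref{prop:represen1} says $I(n)$ has exactly $n+1$ irreducible constituents; so the cyclic spans $S_1, S_2, T_1, \ldots, T_{n-1}$ must be precisely those constituents, giving the decomposition immediately.

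For dimensions I would first identify each basis element of $I(n)^{K_0(p^n)}$ with the characteristic function of a natural subset of $K$. Using Lemma~\ref{L:reps2} and a short computation on bottom-left entries, each double coset $K_0(p^n) y(p^j) K_0(p^n)$ consists precisely of the matrices in $K$ whose $(2,1)$-entry has $p$-adic valuation $j$, while $K_0(p^n) w(1) K_0(p^n) = K \setminus K_0(p)$. Summing over $j \ge k+1$ then gives that $\mathcal{Y}_{k+1}$ is the characteristic function of $K_0(p^{k+1})$, and $\mathcal{U}_0$ is the characteristic function of $K \setminus K_0(p)$. Consequently $v_1 = \mathcal{U}_0 + \mathcal{Y}_1$ is the constant function $1$ on $K$, so $S_1$ is the trivial representation with $\dim S_1 = 1$. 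Similarly $v_2 = \mathcal{U}_0 - p\mathcal{Y}_1$ is $K_0(p)$-left-invariant, so $S_2 \subseteq I(1)$; since $I(1)$ is $(p+1)$-dimensional and already contains $S_1$, this forces $\dim S_2 = p$.

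For $1 \le k \le n-1$ the same identification gives that $w_k = \mathcal{Y}_k - p\mathcal{Y}_{k+1}$ equals the characteristic function of $K_0(p^k)$ minus $p$ times the characteristic function of $K_0(p^{k+1})$. This function is $K_0(p^{k+1})$-left-invariant but not $K_0(p^k)$-left-invariant (evaluating at any $h \in K_0(p^k) \setminus K_0(p^{k+1})$ versus at the identity shows it). Hence $T_k \subseteq I(k+1)$ but $T_k \not\subseteq I(k)$, and Corollary~\ref{cor:represen1} applied at level $k+1$ identifies $T_k$ with $\sigma(k+1)$, of dimension $p^{k-1}(p^2-1)$. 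Specializing $k = n-1$ recovers $T_{n-1} = \sigma(n)$ with $w_{n-1}$ as its distinguished new vector. The only nonroutine ingredient I anticipate is the valuation calculation giving $\mathcal{Y}_{k+1}$ as the characteristic function of $K_0(p^{k+1})$; once that is in hand, everything else is bookkeeping along the chain $I(0) \subset I(1) \subset \cdots \subset I(n)$.
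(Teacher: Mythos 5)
Your proposal is correct. The first step --- using that $\pi_L$ commutes with $\pi_R$, that each irreducible constituent contributes a one-dimensional line to $I(n)^{K_0(p^n)}$, and that the $n+1$ eigenvalue tuples in Proposition~\ref{prop:represen2} are pairwise distinct --- is exactly the paper's argument for the decomposition. Where you genuinely diverge is the dimension count. The paper proves that $\pi_L(\mathcal{U}_0)$ and $\pi_L(\mathcal{V}_r)$ have trace zero on $I(n)$ (Lemma~\ref{lem:represen2}), rewrites the eigenvalue table in terms of $\mathcal{U}_0,\mathcal{V}_1,\dots,\mathcal{V}_{n-1}$, and solves the resulting linear system $\sum_i d_i\lambda_i(F)=0$ together with $\sum_i d_i=p^{n-1}(p+1)$ for the dimensions. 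You instead identify each double coset $K_0(p^n)y(p^j)K_0(p^n)$ as the set of matrices whose $(2,1)$-entry has valuation exactly $j$ (and $K_0(p^n)w(1)K_0(p^n)=K\setminus K_0(p)$), so that $\mathcal{Y}_{k+1}=\mathbf{1}_{K_0(p^{k+1})}$, $v_1\equiv 1$, $v_2\in I(1)$, and $w_k\in I(k+1)\setminus I(k)$; the dimensions then fall out of the filtration $I(0)\subset I(1)\subset\cdots\subset I(n)$ and the dimension formula already recorded in Corollary~\ref{cor:represen1}. Both are complete; your route has the advantage of exhibiting directly which $I(j)$ each constituent first appears in (so $T_k\cong\sigma(k+1)$ is immediate rather than a consequence of matching dimensions), while the paper's trace argument is self-contained in the sense that it does not re-use the dimension formula for $\sigma(m)$ from Corollary~\ref{cor:represen1}. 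The one computation you flag as nonroutine --- the valuation description of the double cosets --- does check out: for $k_1,k_2\in K_0(p^n)$ the $(2,1)$-entry of $k_1gk_2$ is $d_1ca_2$ plus terms of valuation at least $n$, so the valuation of the lower-left entry is a double-coset invariant whenever it is less than $n$, and comparison with the list in Lemma~\ref{L:reps2} gives the claimed identification.
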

\begin{proof}
It follows from the above table that the set of eigenvalues for vectors 
$v_i$ for $i=1,\ 2$ and $w_k$ for $1 \le k \le n-1$ are distinct and hence 
each of them lies in an irreducible component. To finish the proof we 
need to compute the dimensions, for which we shall need
the following lemma. A statement similar to this lemma appears in 
\cite{L-S}.

\begin{lem}\label{lem:represen2}
The operators $\pi_L(\mathcal{U}_0)$ and $\pi_L(\mathcal{V}_r)$ for $1 \le r \le n-1$ have trace zero.
\end{lem}
\begin{proof}
For $g \in K$, let $\phi_g$ be the characteristic function of $K_0(p^n)g$, then
$I(n)$ as a complex vector space has a basis consisting of $\phi_g$ as $g$
varies over the right coset representatives of $K$ modulo $K_0(p^n)$. Thus to prove
lemma it is enough to show that $\pi_L(\mathcal{U}_0)(\phi_g)(g) =\pi_L(\mathcal{V}_r)(\phi_g)(g)=0$, we
will show it for $\mathcal{V}_r$, for $\mathcal{U}_0$ the same argument works. It is easy to see that $\pi_L(\mathcal{V}_r)(\phi_g)$ is supported on $K_0(p^n)y(p^r)K_0(p^n)g$.
So if $\pi_L(\mathcal{V}_r)(\phi_g)(g) \ne 0$ then $g \in K_0(p^n)y(p^r)K_0(p^n)g$ which is
impossible as $K_0(p^n) \ne K_0(p^n)y(p^r)K_0(p^n)$.
\end{proof}
Using table in Proposition~\ref{prop:represen2}, it is easy to obtain following table where
we consider the action of $\mathcal{U}_0,\ \mathcal{V}_1,\ \mathcal{V}_2,\ \ldots,\ \mathcal{V}_{n-1}$ instead:
\begin{table}[ht]
\begin{tabular}{c|ccccccc}
      & $\mathcal{U}_0$ & $\mathcal{V}_1$  & $\hdots$ & $\mathcal{V}_k$ & $\hdots$ & $\mathcal{V}_{n-2}$ & $\mathcal{V}_{n-1}$  \\
[0.5ex]
\hline
$v_1$ & $p^n$ & $p^{n-2}(p-1)$ & $\hdots$ & $p^{n-k-1}(p-1)$ & $\hdots$ & $p(p-1)$ & $p-1$ \\
$v_2$ & $-p^{n-1}$ & $p^{n-2}(p-1)$ & $\hdots$ & $p^{n-k-1}(p-1)$ & $\hdots$ & $p(p-1)$ & $(p-1)$ \\
$w_1$ & $0$ & $-p^{n-2}$  & $\hdots$ & $p^{n-k-1}(p-1)$ & $\hdots$ & $p(p-1)$ & $(p-1)$ \\
$w_2$ & $0$ & $0$  & $\hdots$ & $p^{n-k-1}(p-1)$ & $\hdots$ & $p(p-1)$ & $(p-1)$ \\
$\vdots$ & $\vdots$  & $\vdots$ & $\vdots$ & $\vdots$ & $\vdots$ & $\vdots$ & $\vdots$ \\
$w_k$ & $0$ & $0$ & $\hdots$ & $-p^{n-k-1}$ & $\hdots$ & $p(p-1)$ & $(p-1)$ \\
$\vdots$ &$\vdots$  & $\vdots$ & $\vdots$ & $\vdots$ & $\vdots$ & $\vdots$ & $\vdots$ \\
$w_{n-2}$ & $0$ & $0$  & $\hdots$ & $0$  & $\hdots$ & $-p$ & $(p-1)$ \\
$w_{n-1}$ & $0$ & $0$ & $\hdots$ & $0$ & $\hdots$ & $0$ & $-1$
\end{tabular}
\end{table}

Let $d_1$, $d_2,\ \ldots,\ d_{n+1}$ be the dimension of $S_1,\ S_2,\ \ldots,\ T_{n-1}$ respectively.
Then using Lemma~\ref{lem:represen2} and above table we have following system of linear equations:
\begin{eqnarray*}
p^nd_1-p^{n-1}d_2 &=&0 \\
p^{n-2}(p-1)d_1 + p^{n-2}(p-1)d_2 - p^{n-2}d_3 &=&0\\
\vdots \\
p^{n-k-1}(p-1)(d_1 + d_2 + d_3 + \cdots + d_{k+1}) - p^{n-k-1}d_{k+2} &=&0\\
\vdots \\
(p-1)(d_1 + d_2 + d_3 + \cdots + d_n) - d_{n+1} &=&0\\
d_1 + d_2 + \cdots + d_{n-1} &=& p^{n-1}(p+1)
\end{eqnarray*}
solving which we get the dimensions.
\end{proof}

\section{ Translation from the adelic setting to the classical setting}

In this section following Gelbart~\cite{Gelbart} we shall review
the connection between automorphic forms and classical modular forms and use this connection
to translate the adelic operators of the previous section into their classical
counterparts and thereby obtaining relations satisfied by them.

Let $\Hup$ be the upper half plane and
$G_\infty=\GL_2(\R)^{+}$. Then $G_{\infty}$ acts on $\Hup$
in a standard way.
For $g=\mat{a}{b}{c}{d} \in G_{\infty}$ and $z\in \Hup$ define
\[
j(g,z)=det(g)^{-1/2}(cz+d),\]
for $f$ functions on $\Hup$ define the slash operator $|_{2k}g$ by
\[f|_{2k}g = j(g,z)^{-2k}f\left(\frac{az+b}{cz+d}\right).\]

Let $\A=\A_\Q$ be the adele ring of $\Q$ and $Z_{\A}$ denotes the center of
$\GL_2(\A)$. Let $N$ be a positive integer. We let
$K_l=\GL_2(\Z_l)$ for a prime $l$ not dividing $N$ and
let $K_p=K_{0}(p^{\alpha})$ for a prime $p$ such that $p^{\alpha}\|N$.
Let $K_f$ be the subgroup of $\GL_2(\A)$ defined by
\[
K_f(N)=\prod_{q < \infty}K_q.
\]
By the strong approximation theorem we have
\begin{equation*}
\GL_2(\A)=\GL_2(\Q)G_{\infty}K_f(N)
\end{equation*}
We denote by $A_{2k}(N)$ the space of functions $\Phi:\GL_2(\A)\rightarrow \C$ satisfying the following properties:
\begin{enumerate}
 \item $\Phi(\gamma z g k)= \Phi(g)$ for all $\gamma \in \GL_2(\Q),\ z \in Z_\A,\ g \in \GL_2(\A),\ k \in K_f(N)$.
 \item $\Phi(g r(\theta)) = e^{-i2k\theta}\Phi(g)$ where $r(\theta) = \mat{cos\theta}{-sin\theta}{sin\theta}{cos\theta} \in \mathrm{SO}(2)$.
 \item $\Phi$ is smooth as a function of $G_\infty$ and satisfies the differential equation $\Delta \Phi = -k(k-1)\Phi$ where $\Delta$
 is the Casimir operator.
 \item $\Phi \in \mathrm{L}^2(Z_\A\GL_2(\Q)\backslash\GL_2(\A))$. 
 \item $\Phi$ is cuspidal, that is $\int_{\Q \backslash \A} \Phi\left(\mat{1}{a}{0}{1}g\right) da =0$ for all $g \in \GL_2(\A)$.
\end{enumerate}
By Gelbart \cite[Proposition 3.1]{Gelbart} there exists an isomorphism
\[A_{2k}(N)\rightarrow S_{2k}(\Gamma_0(N))\] given by $\Phi \mapsto f_{\Phi}$
where for $z \in \Hup$,
\[
f_{\Phi}(z)=\Phi(g_\infty)j(g_\infty,i)^{2k}
\]
where $g_\infty \in G_\infty$ is such that $g_\infty(i)=z$. The inverse map is given by $f\mapsto \Phi_f$ where for $g \in \GL_2(\A)$ if
$g=\gamma g_{\infty}k$ (using strong approximation),
\[
\Phi_f(g)=f(g_{\infty}(i))j(g_{\infty},i)^{-2k}.
\]
This isomorphism induces a ring isomorphism of spaces of linear operators by
\[q:\mathrm{End}_{\C}(A_{2k}(N))\rightarrow \mathrm{End}_{\C}(S_{2k}(\Gamma_0(N)))\] given by
\[
q(\mathcal{T})(f)=f_{\mathcal{T}(\Phi_f)}.\]

Let $N=p^nM$ where $p$ is a prime coprime to $M$ and $G=\GL_2(\Q_p)$.
We note that the $H(G//K_0(p^n))$ is a subalgebra of $\mathrm{End}_{\C}(A_{2k}(N))$ via the following
action:
\[\text{for $\mathcal{T} \in H(G//K_0(p^n))$ and $\Phi \in A_{2k}(N)$,}\ \ \mathcal{T}(\Phi)(g) = \int_{G}\mathcal{T}(x) \Phi(gx)dx.\]
\begin{remark}~\label{rem:1}
We note that if $p_1$ and $p_2$ are distinct primes then 
the operators $\mathcal{T}_1 \in H(G//K_0(p_1^n))$ and 
$\mathcal{T}_2 \in H(G//K_0(p_2^n))$ in $\mathrm{End}_{\C}(A_{2k}(N))$ commute, that is, 
$\mathcal{T}_1\circ\mathcal{T}_2 = \mathcal{T}_2\circ\mathcal{T}_1$.
\end{remark}

Then we have following propositions.
\begin{prop}\label{prop:trans1}
Let $N=pM$ such that $p \nmid M$ and $f \in S_{2k}(\Gamma_0(N))$. For $\mathcal{T}_1,\ \mathcal{U}_1 \in H(G//K_0(p))$ we have
\begin{enumerate}
 \item $q(\mathcal{T}_1)(f)(z) = p^{-k} \sum_{s=0}^{p-1} f((z+s)/p)$.
 \item $q(\mathcal{U}_1)(f)(z) = f|_{2k}\mat{p\beta}{1}{N\gamma}{p} (z) \quad \text{where $p^{2}\beta-N\gamma=p$}$.
\end{enumerate}
\end{prop}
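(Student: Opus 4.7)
The plan is to unwind the definition of $q$: choosing $g_\infty\in G_\infty$ with $g_\infty(i)=z$, we have $q(\mathcal{T})(f)(z)=(\mathcal{T}\Phi_f)(g_\infty)\,j(g_\infty,i)^{2k}$. For each operator I first expand $(\mathcal{T}\Phi_f)(g_\infty)$ as a finite sum of values $\Phi_f(g_\infty\delta)$ by decomposing the supporting double coset into left $K_0(p)$-cosets via Lemma~\ref{lem:rel3}, where $\delta$ is placed at the $p$-adic component of the adele. For each representative $\delta$ I exhibit a rational $\eta\in\GL_2(\Q)$ (embedded diagonally) such that $\eta\delta\in K_0(p)$ at $p$ and $\eta\in K_l$ at every other finite place $l$. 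Strong approximation then gives $\eta g_\infty\delta = (\eta g_\infty)\cdot k$ with $k\in K_f(N)$; left $\GL_2(\Q)$-invariance of $\Phi_f$ together with the cocycle identity $j(\eta g_\infty,i)=j(\eta,z)\,j(g_\infty,i)$ reduces the contribution to $f|_{2k}\eta(z)$.

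For part (1), Lemma~\ref{lem:rel3}(1) yields $K_0(p)d(p)K_0(p)=\bigsqcup_{s=0}^{p-1}x(s)d(p)K_0(p)$. For each representative $\delta_s=x(s)d(p)$ I take $\eta_s=\mat{1/p}{-s/p}{0}{1}$; one checks $\eta_s\delta_s=I$ at $p$, and at every $l\ne p$ the entries of $\eta_s$ lie in $\Z_l$ (since $1/p$ is an $l$-adic unit), the determinant $1/p$ is an $l$-adic unit, and the lower-left is zero, so $\eta_s\in K_l$. Then $\eta_s(z)=(z-s)/p$ and $j(\eta_s,z)=p^{1/2}$, so $f|_{2k}\eta_s(z)=p^{-k}f((z-s)/p)$. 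Summing and using the one-periodicity of $f$ (since $\mat{1}{1}{0}{1}\in\Gamma_0(N)$) to replace each $f((z-s)/p)$ by $f((z+p-s)/p)$ for $1\le s\le p-1$, then reindexing, produces the stated formula.

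For part (2), Lemma~\ref{lem:rel3}(3) with $n=1$ gives the single-coset decomposition $K_0(p)w(p)K_0(p)=w(p)K_0(p)$. Writing $N=pM$ with $p\nmid M$, I take $\eta=\mat{\beta}{1/p}{M\gamma}{1}$ where $\beta,\gamma\in\Z$ satisfy $p\beta-M\gamma=1$, which exist since $\gcd(p,M)=1$. One computes $\eta\cdot w(p)=\mat{1}{-\beta}{p}{-M\gamma}\in K_0(p)$ (determinant $p\beta-M\gamma=1$, lower-left $p$). At each prime $l\ne p$ with $l^e\|N$ one has $l^e\mid M$, so the lower-left $M\gamma$ lies in $l^e\Z_l$, the other entries in $\Z_l$, and $\det\eta=1/p$ is an $l$-adic unit; hence $\eta\in K_l$. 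The recipe above then gives $q(\mathcal{U}_1)(f)(z)=f|_{2k}\eta(z)$, and scale invariance $f|_{2k}\eta=f|_{2k}(p\eta)$ (valid since $p>0$) rewrites this as $f|_{2k}\mat{p\beta}{1}{N\gamma}{p}(z)$. Multiplying $p\beta-M\gamma=1$ by $p$ yields the required $p^2\beta-N\gamma=p$.

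There is no substantive obstacle: the only point requiring care is to verify simultaneously that the rational $\eta$ produces an element of $K_0(p)$ after multiplication by $\delta$ at $p$ and lies in $K_l$ at all other finite places. The hypothesis $p\|N$ is used precisely to ensure $l^e\mid M$ for every $l\ne p$ with $l^e\|N$. The rest is a routine adelic-to-classical translation via the cocycle for $j$.
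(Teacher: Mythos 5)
Your proposal is correct and follows essentially the same route as the paper: decompose the supporting double coset into left $K_0(p)$-cosets via Lemma~\ref{lem:rel3}, multiply each adelic representative by a rational matrix ($\eta_s=d(p^{-1})x(-s)$ up to your sign convention in (1), and $\eta=W_pz(p^{-1})$ in (2)) that lands in $K_0(p)$ at $p$ and in $K_l$ elsewhere, then invoke left $\GL_2(\Q)$-invariance of $\Phi_f$ and the cocycle for $j$. The only cosmetic difference is your choice of representatives $x(s)d(p)$ in (1), which costs you a harmless periodicity-and-reindexing step that the paper avoids by using $x(-s)d(p)$.
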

\begin{proof}
For $\Phi \in A_{2k}(N)$ we have using Lemma~\ref{lem:rel3},
\[\mathcal{T}_1(\Phi)(g) = \int_{G}X_{d(p)}(x) \Phi(gx)dx = \int_{K_0d(p)K_0} \Phi(gx)dx,\]
\[=\sum_{s=0}^{p-1} \int_{x(-s)d(p)K_0} \Phi(gx) = \sum_{s=0}^{p-1} \Phi(gx(-s)d(p)),\]
and
\[\mathcal{U}_1(\Phi)(g) = \int_{G}X_{w(p)}(x) \Phi(gx)dx = \int_{w(p)K_0} \Phi(gx)dx = \Phi(gw(p)).\]
Hence for $(1)$ we get
\[q(\mathcal{T}_1)(f)(z) = f_{\mathcal{T}_1(\Phi_f)}(z) = \sum_{s=0}^{p-1} \Phi_f(g_\infty x(-s)d(p))j(g_\infty, i)^{2k},\]
where $g_\infty \in G_\infty$ such that $g_\infty i = z$. Since $\Phi_f$ is invariant under left multiplication by rational
matrices, multiplying by $\gamma = d(p^{-1})x(s) \in \GL_2(\Q)$ we obtain
\[\Phi_f(g_\infty x(-s)d(p)) = \Phi_f(d(p^{-1})x(s)g_\infty \cdot k_f) = \Phi_f(d(p^{-1})x(s)g_\infty) \]
where $k_f \in K_f(N)$ has $1$ in the $p$-th place and $d(p^{-1})x(s)$ in $q$-th place for any finite prime $q \ne p$.
Thus,
\begin{equation*}
\begin{split}
q(\mathcal{T}_1)(f)(z) &= \sum_{s=0}^{p-1} \Phi_f(d(p^{-1})x(s)g_\infty)j(g_\infty, i)^{2k}\\
= \sum_{s=0}^{p-1} f(d(p^{-1})x(s)z) & j(d(p^{-1})x(s), z)^{-2k} = p^{-k} \sum_{s=0}^{p-1} f((z+s)/p).
\end{split}
\end{equation*}
For $(2)$, let $W_p = \mat{p\beta}{1}{N\gamma}{p}$ be a matrix of determinant $p$.
As before, multiplying $g_\infty w(p)$ by $W_p z(p^{-1}) \in \GL_2(\Q)$ we get
\begin{equation*}
q(\mathcal{U}_1)(f)(z) = \Phi_f(g_\infty w(p))j(g_\infty, i)^{2k} =\Phi_f(W_pz(p^{-1})g_\infty \cdot k_f)j(g_\infty, i)^{2k}
\end{equation*}
where $k_f$ has $p$-th component $W_pz(p^{-1})w(p) \in K_p$ and for prime $q \ne p$ has
$q$-th component $W_pz(p^{-1}) \in K_q$, that is $k_f \in K_f(N)$. Thus,
\[q(\mathcal{U}_1)(f)(z)= 
 f(W_pz)j(W_p, z)^{-2k} = f|_{2k}W_p(z).\]
\end{proof}
\begin{remark}
The operator $q(\mathcal{U}_1)$ is the usual Atkin-Lehner operator $W_p$ while the
operator $q(\mathcal{T}_1)$ is the operator $\tilde{U}_p=p^{1-k}U_p$ where $U_p$ is the usual Hecke operator, sometime also
denoted as $T_p$ (Refer to \cite{A-L} and \cite{Miyake} for more details). It is obvious that $q(\mathcal{Z})$ is the identity
operator.
\end{remark}
Let $Q_p = q(\mathcal{U}_0)$ where $\mathcal{U}_0 \in H(G//K_0(p))$. Then using Lemma~\ref{lem:rel4} we have
\begin{cor}
$Q_p= p^{1-k}U_pW_p$ and $(Q_p-p)(Q_p+1)=0$.
\end{cor}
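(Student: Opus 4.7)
The plan is to transfer the two statements of the corollary directly from the Iwahori Hecke algebra side via the ring homomorphism $q$, using that $q(\mathcal{Z})$ is the identity operator on $S_{2k}(\Gamma_0(N))$ (as stated in the remark preceding the corollary) and that the assignment $\mathcal{T}\mapsto q(\mathcal{T})$ turns convolution into composition of operators on $S_{2k}(\Gamma_0(N))$.

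For the first identity, I would start from the relations of Lemma~\ref{lem:rel4}. Part~(6) with $n=1$ gives $\mathcal{U}_0*\mathcal{U}_1 = \mathcal{T}_1$, and part~(1) of Theorem~\ref{thm:thm1Hecke} gives $\mathcal{U}_1^2 = \mathcal{Z}$. Right-multiplying the first relation by $\mathcal{U}_1$ yields
\[
\mathcal{U}_0 * \mathcal{Z} \;=\; \mathcal{T}_1 * \mathcal{U}_1.
\]
Applying $q$ and using that $q$ is multiplicative (convolution of Hecke operators acting on $A_{2k}(N)$ corresponds to composition of the induced operators on $S_{2k}(\Gamma_0(N))$, by an elementary Fubini argument in the definition of the action), together with $q(\mathcal{Z})=I$, this becomes
\[
q(\mathcal{U}_0) \;=\; q(\mathcal{T}_1)\circ q(\mathcal{U}_1).
\]
By Proposition~\ref{prop:trans1}, $q(\mathcal{T}_1)=\tilde{U}_p=p^{1-k}U_p$ and $q(\mathcal{U}_1)=W_p$, so $Q_p = p^{1-k}U_p W_p$.

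For the second identity, I would simply apply $q$ to relation~(2) of Theorem~\ref{thm:thm1Hecke}, namely $(\mathcal{U}_0-p)(\mathcal{U}_0+1)=0$ in $H(G/\!/K_0(p))$. Since $q$ is an algebra homomorphism sending $\mathcal{U}_0$ to $Q_p$ and sending the scalar identity to the scalar identity, this yields $(Q_p-p)(Q_p+1)=0$ on $S_{2k}(\Gamma_0(N))$.

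There is no real obstacle here; the only point that requires any care is the verification that convolution in $H(G/\!/K_0(p))$ becomes composition of the operators on $A_{2k}(N)$ (and hence, via the isomorphism with $S_{2k}(\Gamma_0(N))$, on the classical space). This is immediate from the definition $\mathcal{T}(\Phi)(g)=\int_G \mathcal{T}(x)\Phi(gx)\,dx$ by a change of variable, and it is this fact alone that allows both identities to be read off from the corresponding relations in the Iwahori Hecke algebra.
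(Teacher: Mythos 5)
Your proposal is correct and is essentially the paper's own argument: the paper's proof is simply ``using Lemma~\ref{lem:rel4}'', i.e.\ transfer the Iwahori--Hecke relations through the algebra homomorphism $q$ with $q(\mathcal{Z})=I$, which is exactly what you do. The only cosmetic difference is that you derive $\mathcal{T}_1*\mathcal{U}_1=\mathcal{Z}*\mathcal{U}_0$ from part~(6) together with $\mathcal{U}_1^2=\mathcal{Z}$, whereas it can be read off directly from part~(2) of Lemma~\ref{lem:rel4} with $n=1$.
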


\begin{prop}\label{prop:trans2}
Let $N=p^nM$ where $n \ge 2$ and $p \nmid M$. Let $f \in S_{2k}(\Gamma_0(N))$. For 
$\mathcal{T}_1,\ \mathcal{U}_m,\ \mathcal{V}_r \in H(G//K_0(p^n))$ where $1 \le r \le n-1$, $m \le n$ we have
\begin{enumerate}
 \item $q(\mathcal{T}_1)(f)(z) = p^{-k} \sum_{s=0}^{p-1} f((z+s)/p) = \tilde{U}_p(f)(z)$.
 \item If $f \in S_{2k}(\Gamma_0(p^rM))$ where $r \le n$ then 
 $q(\mathcal{U}_r)(f)(z) = p^{n-r}f|_{2k}W_{p^r} (z) $ where $W_{p^r} = \mat{p^r\beta}{1}{p^rM\gamma}{p^r}$ 
 is an integer matrix of determinant $p^r$. In particular, 
 $q(\mathcal{U}_n)(f)(z) = f|_{2k}W_{p^n} (z) $ where $W_{p^n}$ is the Atkin-Lehner operator on $S_{2k}(\Gamma_0(N))$. 
 \item $q(\mathcal{V}_r)(f)(z) = \sum_{s \in \Z_p^*/ 1+p^{n-r}\Z_p} f|_{2k}A_s$
 where $A_s \in \SL_2(\Z)$ is any matrix of the form 
 $\mat{a_s}{b_s}{p^{r}M}{p^{n-r}-sM}$.
 \item If $f \in S_{2k}(\Gamma_0(p^rM))$ then $q(\mathcal{V}_r)(f) = p^{n-r-1}(p-1)f$, 
 consequently, $q(\mathcal{Y}_r)(f) = p^{n-r}f$.
\end{enumerate}
\end{prop}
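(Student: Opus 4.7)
The plan is to mirror the proof of Proposition~\ref{prop:trans1} by computing, for each local Hecke operator $X_h$, the left $K_0(p^n)$-coset decomposition of $K_0(p^n) h K_0(p^n)$, applying the definition of the action on $A_{2k}(N)$ to write $\mathcal{T}(\Phi_f)(g_\infty)$ as a finite sum, and for each summand choosing a rational $\gamma \in \GL_2(\Q)$ whose diagonal embedding absorbs the finite-place contribution by left $\GL_2(\Q)$-invariance and right $K_f(N)$-invariance of $\Phi_f$, leaving a purely archimedean evaluation that translates to a slash-action on $f$. Part (1) is essentially identical to Proposition~\ref{prop:trans1}(1): from $K_0(p^n)d(p)K_0(p^n) = \bigsqcup_{s \in \Z/p\Z} x(-s)d(p)K_0(p^n)$ (derived as in the proof of Lemma~\ref{lem:rel5}) the choice $\gamma_s = d(p^{-1})x(s)$ trivialises the $p$-adic piece and remains in $K_q$ elsewhere, producing $\tilde U_p(f)(z)$.

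Part (2) is the genuinely new case. Using $K_0(p^n)w(p^r)K_0(p^n) = \bigsqcup_{s \in \Z_p/p^{n-r}\Z_p} x(s)w(p^r)K_0(p^n)$ from the proof of Lemma~\ref{lem:rel5}, I first observe that $w(p^r)^{-1}x(s)w(p^r) = y(-sp^r) \in K_0(p^r)$, so every representative $x(s)w(p^r)$ lies in the single right coset $w(p^r)K_0(p^r)$. Because $f \in S_{2k}(\Gamma_0(p^r M))$, $\Phi_f$ is right-invariant under $K_0(p^r)$ at the $p$-place, and all $p^{n-r}$ summands collapse to $\Phi_f(g_\infty w(p^r))$, producing the prefactor. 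I then take $\gamma = W_{p^r}$: at every $q \neq p$ its determinant $p^r$ is a unit in $\Z_q$ and its lower-left is divisible by $M$, so $W_{p^r} \in K_0(q^{\alpha_q})$; at the $p$-place a direct computation gives $W_{p^r} w(p^r) = z(p^r)\cdot k_p$ with $k_p \in K_0(p^r) \cap \SL_2(\Z_p)$. The scalar $z(p^r)$ at the $p$-place is traded, via the trivial central character of $\Phi_f$, for $z(p^{-r})$ distributed over the remaining places; the finite copies are absorbed into $K_f$, and the archimedean copy becomes a $p^{-r}I$ rescaling. A short cocycle check yields $j(p^{-r}W_{p^r},z) = j(W_{p^r},z)$, so this rescaling is invisible in the slash-action and the identity $q(\mathcal{U}_r)(f)(z) = p^{n-r} f|_{2k} W_{p^r}(z)$ falls out; the case $r=n$ recovers the Atkin--Lehner operator on $S_{2k}(\Gamma_0(N))$.

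For Part (3) I use Lemmas~\ref{lem:lem1p^r} and~\ref{lem:lem2p^r} to write $K_0(p^n)y(p^r)K_0(p^n) = \bigsqcup_{s \in \Z_p^*/(1+p^{n-r}\Z_p)} d(s)y(p^r)K_0(p^n)$, with integer representatives $s \in (\Z/p^{n-r}\Z)^*$. Since $\gcd(p^r M,\; p^{n-r} - sM) = 1$ one may complete to $A_s = \mat{a_s}{b_s}{p^r M}{p^{n-r}-sM} \in \SL_2(\Z)$; the key computation shows $A_s \cdot d(s) y(p^r)$ has lower-left equal to $p^n$ and determinant $s \in \Z_p^\times$, hence lies in $K_0(p^n)$ at the $p$-place, while at each $q \mid M$ with $q \neq p$ the matrix $A_s$ itself already lies in $K_0(q^{\alpha_q})$ because its lower-left $p^r M$ is divisible by $q^{\alpha_q}$. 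Strong approximation then collapses $\Phi_f(g_\infty \cdot d(s) y(p^r))$ to $\Phi_f(A_s g_\infty)$, which translates to $f|_{2k}A_s(z) \cdot j(g_\infty,i)^{-2k}$, and summing over $s$ yields (3). Part (4) is immediate: if $f \in S_{2k}(\Gamma_0(p^r M))$ then each $A_s \in \Gamma_0(p^r M) \cap \SL_2(\Z)$ fixes $f$, so every summand equals $f$ and the total is $|(\Z/p^{n-r}\Z)^*| f = p^{n-r-1}(p-1)f$; the formula for $q(\mathcal{Y}_r)(f)$ then follows from the geometric sum $1 + (p-1)\sum_{j=0}^{n-r-1} p^j = p^{n-r}$.

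The main obstacle I foresee is the central-character bookkeeping in Part (2): the element $z(p^r)$ emerging from $W_{p^r}w(p^r) \bmod K_0(p^r)$ sits at a single finite place and must be rewritten adelically using $Z(\Q)$-invariance, and one has to verify carefully that the resulting archimedean scalar is exactly what the $j$-cocycle absorbs, so that no stray factor of $p^{\pm rk}$ remains in the final formula.
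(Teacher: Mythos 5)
Your proposal is correct and follows essentially the same route as the paper: the same coset decompositions (Lemma~\ref{lem:rel5} for $\mathcal{U}_r$, Lemmas~\ref{lem:lem1p^r} and~\ref{lem:lem2p^r} for $\mathcal{V}_r$), the same rational matrices $W_{p^r}$ (the paper uses $W_{p^r}z(p^{-r})$ directly in place of your $W_{p^r}$ plus central invariance, which is equivalent), and the same strong-approximation translation. The only organizational difference is in part (2), where you collapse the $p^{n-r}$ summands adelically via $x(s)w(p^r)=w(p^r)y(-sp^r)$ and right $K_0(p^r)$-invariance of $\Phi_f$, whereas the paper translates each summand to $f|_{2k}W_{p^r}x(-s)$ and collapses classically using invariance of $f|_{2k}W_{p^r}$ under integer translations; both yield the same factor $p^{n-r}$.
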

\begin{proof}
The proof of $(1)$ is as in Proposition~\ref{prop:trans1}. The proof of $(2)$ is similar, using decomposition in Lemma~\ref{lem:rel5} we have 
for $r \le n$,
\[\mathcal{U}_r(\Phi)(g) = \sum_{s=0}^{p^{n-r}-1} \Phi(gx(s)w(p^r)).\]
Let $f \in S_{2k}(\Gamma_0(p^rM))$ and $W_{p^r} = \mat{p^r\beta}{1}{p^rM\gamma}{p^r}$ be an integer matrix of determinant $p^r$. Then,
\[q(\mathcal{U}_r)(f)(z)= \sum_{s=0}^{p^{n-r}-1} \Phi_f(g_\infty x(s)w(p^r)) j(g_\infty, i)^{2k}\] where 
$z = g_\infty i$. Since $\Phi_f \in A_{2k}(p^rM)$ multiplying $g_\infty x(s)w(p^r)$ by the matrix $W_{p^r} z(p^{-r})x(-s) \in \GL_2(\Q)$ 
we get that, 
\[\Phi_f(g_\infty x(s)w(p^r)) = \Phi_f(h_\infty k_f) = \Phi_f(h_\infty) \] 
where $h_\infty = z(p^{-r})W_{p^r}x(-s)g_\infty \in G_\infty$ and $k_f \in K_f(p^rM)$. 
Since $f |_{2k} W_{p^r} \in S_{2k}(\Gamma_0(p^rM))$,
\[q(\mathcal{U}_r)(f)(z)= \sum_{s=0}^{p^{n-r}-1} f |_{2k} W_{p^r}x(-s) (z) = p^{n-r} f |_{2k} W_{p^r} (z).\]

For $(3)$, if $\Phi \in A_{2k}(N)$ then using Lemma~\ref{lem:lem1p^r} and~\ref{lem:lem2p^r} we have
\begin{equation*}
\mathcal{V}_r(\Phi)(g) = \int_{G}X_{y(p^r)} \Phi(gh) dh
=\sum_{s \in \Z_p^*/ 1+p^{n-r}\Z_p} \Phi(gd(s)y(p^r)).
\end{equation*}
Let $z \in \Hup$ be such that $z=g_{\infty} i$ for some $g_{\infty} \in \mathrm{G}_{\infty}$. Then,
\[q(\mathcal{V}_r)(f)(z)  
=\sum_{s \in \Z_p^*/ 1+p^{n-r}\Z_p}\Phi_f(g_{\infty}d(s)y(p^r))j(g_{\infty}, i)^{2k}. \]
By the strong approximation, $g_{\infty}d(s)y(p^r) = A_s^{-1} h_{\infty} k_f$ for some $A_s \in \GL_2(\Q)$, $h_\infty \in \mathrm{G}_{\infty}$
and $k_f \in K_f(N)$. So we need $A_s \in \GL_2(\Q)$ such that $A_sd(s)y(p^r)$ belongs to $K_0(p^n)$ and $A_s$ belongs to $K_q$ for $q \ne p$.
So we must choose $A_s$ with determinant $1$. For any $s \in \Z_p^*$, we have $\mathrm{gcd}(p^rM, p^{n-r}-sM)=1$, so there
exists integers $a_s$, $b_s$ such that $a_s(p^{n-r}-sM)-b_sp^rM=1$. Take
\[A_s=\mat{a_s}{b_s}{p^rM}{p^{n-r}-sM} \in \SL_2(\Z),\]
then $A_s$ belongs to $K_q$ for $q \ne p$ and
\[A_sd(s)y(p^r) 
= \mat{a_s+b_sp^r}{b_s}{p^n}{p^{n-r}-sM}\in K_0.\]
Thus
\[\Phi_f(g_{\infty}d(s)y(p^r)) = f(A_sz) j(A_s,z)^{-2k} j(g_{\infty},i)^{-2k},\]
and so
\[q(\mathcal{V}_r)(f)(z) = \sum_{s \in \Z_p^*/ 1+p^{n-r}\Z_p} f(A_sz) j(A_s,z)^{-2k} = \sum_{s \in \Z_p^*/ 1+p^{n-r}\Z_p} f|_{A_s}(z). \]
Thus if $f \in  S_{2k}(\Gamma_0(p^r M))$ then $q(\mathcal{V}_r)(f)(z) = p^{n-r-1}(p-1)f$.
Further,
\[q(\mathcal{Y}_r)(f) = f + \sum_{j=r}^{n-1} \sum_{s \in \Z_p^*/1+p^{n-j}\Z_p}f|_{2k}\mat{a_{s,j}}{b_{s,j}}{p^{j}M}{p^{n-j}-sM}\]
\[=  f + \sum_{j=r}^{n-1} (p-1) p^{n-j-1} f = p^{n-r}f,\]
proving $(4)$.
\end{proof}

Let $N=p^nM$ with $p$ and $M$ coprime and $n \ge 2$.
Let $Q_{p^m} = (\tilde{U}_p)^m W_{p^m}$ for $m \le n$ where $W_{p^m}$ is the Atkin-Lehner operator 
on $S_{2k}(\Gamma_0(p^mM))$. Using 
Lemma~\ref{lem:rel5} and Propositions~\ref{prop:trans2} and~\ref{prop:reln2} we have
\begin{cor}\label{cor:trans3}
For $\mathcal{U}_0 \in H(G//K_0(p^n))$, we have
$Q_{p^n} = q(\mathcal{U}_0)$ and hence $Q_{p^n}(Q_{p^n}-p^n)(Q_{p^n}+p^{n-1})=0$.
Further for $m \le n$ we have $Q_{p^n} = (\tilde{U}_p)^m q(\mathcal{U}_m)$, hence if
$ f \in S_{2k}(\Gamma_0(p^mM)) \subseteq S_{2k}(\Gamma_0(N))$ then $Q_{p^n}(f) = p^{n-m}Q_{p^m}(f)$.
\end{cor}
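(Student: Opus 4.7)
The plan is to push the three claims through the ring homomorphism $q:\mathrm{End}_\C(A_{2k}(N))\rightarrow \mathrm{End}_\C(S_{2k}(\Gamma_0(N)))$, using the $p$-adic identities already established (Lemma~\ref{lem:rel5}, Proposition~\ref{prop:reln2}) together with the explicit translations of $\mathcal{T}_1$ and $\mathcal{U}_m$ into classical operators (Proposition~\ref{prop:trans2}).

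First I would handle the middle assertion $Q_{p^n}=(\tilde U_p)^m q(\mathcal{U}_m)$ for $m\le n$. Apply $q$ to Lemma~\ref{lem:rel5}: the identity $(\mathcal{T}_1)^m*\mathcal{U}_m=\mathcal{Z}^m*\mathcal{U}_0$ becomes $q(\mathcal{T}_1)^m\circ q(\mathcal{U}_m)=q(\mathcal{Z})^m\circ q(\mathcal{U}_0)$. By Proposition~\ref{prop:trans2}(1) we have $q(\mathcal{T}_1)=\tilde U_p$, and by the remark following Proposition~\ref{prop:trans1} (trivial central character) $q(\mathcal{Z})$ is the identity, so this reads $(\tilde U_p)^m q(\mathcal{U}_m)=q(\mathcal{U}_0)$. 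Specializing to $m=n$ and using Proposition~\ref{prop:trans2}(2) with $r=n$, we get $q(\mathcal{U}_n)(f)=f|_{2k}W_{p^n}$ (the prefactor $p^{n-n}$ is $1$), whence $q(\mathcal{U}_0)=(\tilde U_p)^n W_{p^n}=Q_{p^n}$ by the definition of $Q_{p^n}$; this establishes $Q_{p^n}=q(\mathcal{U}_0)$.

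For the cubic annihilator, I would apply $q$ to the identity $\mathcal{U}_0*(\mathcal{U}_0-p^n)*(\mathcal{U}_0+p^{n-1})=0$ of Proposition~\ref{prop:reln2}(3). Multiplicativity of $q$ and $Q_{p^n}=q(\mathcal{U}_0)$ immediately give $Q_{p^n}(Q_{p^n}-p^n)(Q_{p^n}+p^{n-1})=0$.

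For the last claim, take $f\in S_{2k}(\Gamma_0(p^mM))\subseteq S_{2k}(\Gamma_0(N))$. Using $Q_{p^n}=(\tilde U_p)^m q(\mathcal{U}_m)$ from the first step, apply Proposition~\ref{prop:trans2}(2), which on this smaller space gives $q(\mathcal{U}_m)(f)=p^{n-m}\,f|_{2k}W_{p^m}$. Pulling out the scalar yields $Q_{p^n}(f)=p^{n-m}(\tilde U_p)^m f|_{2k}W_{p^m}=p^{n-m}Q_{p^m}(f)$. The only subtle point is that the factor $p^{n-m}$ appearing in $q(\mathcal{U}_m)(f)$ depends on the level at which $f$ actually sits (not just the ambient level $p^nM$); this is the content of Proposition~\ref{prop:trans2}(2) and reflects the index $[K_0(p^m):K_0(p^n)]$-type counting built into the double coset decomposition in Lemma~\ref{lem:rel5}. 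Once that scaling is granted, each of the four assertions is a one-line consequence of a ring-homomorphism argument, so I do not expect a genuine obstacle beyond keeping track of this normalization.
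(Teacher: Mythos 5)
Your proof is correct and follows exactly the route the paper intends: the paper derives this corollary by citing precisely Lemma~\ref{lem:rel5}, Proposition~\ref{prop:trans2} and Proposition~\ref{prop:reln2}, i.e.\ pushing $(\mathcal{T}_1)^m*\mathcal{U}_m=\mathcal{Z}^m*\mathcal{U}_0$ and the cubic relation for $\mathcal{U}_0$ through the ring homomorphism $q$ and then using the level-dependent normalization $q(\mathcal{U}_m)(f)=p^{n-m}f|_{2k}W_{p^m}$ for $f\in S_{2k}(\Gamma_0(p^mM))$. Your remark about the factor $p^{n-m}$ depending on the actual level of $f$ is the only genuinely delicate point, and you handle it as the paper does.
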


Let $S_{p^n, r} = q(\mathcal{Y}_r)$ where $\mathcal{Y}_r \in H(G//K_0(p^n))$, $1 \le r \le n$. 
Using relations in Corollary~\ref{cor:reln1}, we have
\begin{cor}\label{cor:trans4}
 $S_{p^n, r}(S_{p^n, r}-p^{n-r})=0$
for $1 \le r \le n$.
\end{cor}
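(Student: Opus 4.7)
The plan is to obtain this corollary by transporting an identity we already have at the level of the Hecke algebra $H(G//K_0(p^n))$ through the ring-homomorphism $q$ into $\mathrm{End}_{\C}(S_{2k}(\Gamma_0(N)))$. So essentially nothing new needs to be computed classically; one just has to check that the abstract Hecke algebra identity survives under $q$.

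First, I would recall Corollary~\ref{cor:reln1}(1), which states $\mathcal{Y}_{n-r}^2 = p^r \mathcal{Y}_{n-r}$ for $0 \le r \le n-1$. Reparameterizing by $s = n-r$, this is the relation $\mathcal{Y}_s * \mathcal{Y}_s = p^{n-s}\mathcal{Y}_s$ for $1 \le s \le n$ inside $H(G//K_0(p^n))$. Next, I would note that the convolution action of $H(G//K_0(p^n))$ on $A_{2k}(N)$ is a ring homomorphism into $\mathrm{End}_{\C}(A_{2k}(N))$: this is standard, because the action is given by right integration $\mathcal{T}(\Phi)(g)=\int_G \mathcal{T}(x)\Phi(gx)\,dx$, which is associative under convolution in the usual way, and here all the relevant support is compact and the integrands locally constant so convergence is automatic.

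Composing this with $q$ gives a ring homomorphism $H(G//K_0(p^n)) \to \mathrm{End}_{\C}(S_{2k}(\Gamma_0(N)))$. Applying it to the relation $\mathcal{Y}_r * \mathcal{Y}_r = p^{n-r}\mathcal{Y}_r$ yields, at the classical level,
\[
S_{p^n,r}\circ S_{p^n,r} = p^{n-r}\, S_{p^n,r},
\]
which is exactly the statement $S_{p^n,r}(S_{p^n,r}-p^{n-r})=0$.

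There is no real obstacle to overcome; the only thing worth double-checking is the compatibility of convolution in $H(G//K_0(p^n))$ with composition of operators on $A_{2k}(N)$, i.e.\ that $(\mathcal{T}_1 * \mathcal{T}_2)(\Phi) = \mathcal{T}_1(\mathcal{T}_2(\Phi))$, but this is the standard Fubini computation that was implicitly already used in the proof of Corollary~\ref{cor:trans3} for the relation coming from $\mathcal{U}_0$. So in effect this corollary is simply a transcription of Corollary~\ref{cor:reln1}(1) through $q$.
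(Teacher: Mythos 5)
Your proposal is correct and matches the paper's own (very terse) argument: the paper defines $S_{p^n,r}=q(\mathcal{Y}_r)$ and simply cites the relation $\mathcal{Y}_r*\mathcal{Y}_r=p^{n-r}\mathcal{Y}_r$ from Corollary~\ref{cor:reln1} (equivalently part (1) after the substitution you make, or part (2) with $r=l$), transported through the algebra homomorphism $q$. Your extra remark about checking that convolution corresponds to composition of operators is the right point to flag, and it is exactly what the paper leaves implicit.
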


\section{Eigenspaces of classical operators and the characterization of the new space.}

Let $N$ be a positive integer. In this section we shall look at the classical operators on 
$S_{2k}(\Gamma_0(N))$ that come from the adelic Hecke algebra via the isomorphism
\[q:\mathrm{End}_{\C}(A_{2k}(N))\rightarrow \mathrm{End}_{\C}(S_{2k}(\Gamma_0(N)))\]
and study their eigenspaces. We shall prove the theorems stated in 
Section~\ref{sec:mainresults} including our main result Theorem~\ref{thm:sec2thm4}.
\subsection{$N$ square-free}\label{section:pM}
Let $N$ be a square-free positive integer and $S$ be the set of prime divisors of $N$.
Let $p \in S$. Recall that for
$\mathcal{U}_0,\ \mathcal{U}_1,$ and $\mathcal{T}_1 \in H(G//K_0(p))$ we respectively obtained the classical operators
$Q_p,\ W_p$ and $\tilde{U}_p$ where
\[\tilde{U}_p(f)(z) = p^{-k}\sum_{s=0}^{p-1}f((z+s)/p),\]
\[W_p(f)(z) = f|_{2k}\mat{p\beta}{1}{N\gamma}{p}(z) \quad \text{where $p^{2}\beta-N\gamma=p$},\]
\[Q_p(f)(z) = \tilde{U}_p W_p(f)(z) = p^{-k}\sum_{s=0}^{p-1}W_p(f)((z+s)/p),\]
and \[(Q_p-p)(Q_p+1)=0.\]
For $N$, $d$ any positive integers recall the shift operator 
$V(d) : S_{2k}(\Gamma_0(N)) \rightarrow S_{2k}(\Gamma_0(dN))$ given by 
$V(d)(f) = d^{-k}f|_{2k}\mat{d}{0}{0}{1}$.
It is well known \cite{A-L} that the old space
\begin{equation}\label{eq:1}
\begin{split}
S^\mathrm{old}_{2k}(\Gamma_0(N)) = & \bigoplus_{dM \mid N,\ M \ne N} V(d)S^\mathrm{new}_{2k}(\Gamma_0(M)) \\
= & 
\sum_{p_i \in S} S_{2k}(\Gamma_0(N/p_i)) + V(p_i)S_{2k}(\Gamma_0(N/p_i)). 
\end{split}
\end{equation}

We will consider the action of $Q_p$ on each of the above summands.
\begin{lem}\label{lem:lem1pM}
Let $f \in S^\mathrm{new}_{2k}(\Gamma_0(N))$ be a new form. Then $Q_p(f)=-f$, that is, 
$S^\mathrm{new}_{2k}(\Gamma_0(N))$ is contained in the $-1$ eigenspace of $Q_p$.
\end{lem}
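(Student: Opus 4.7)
The plan is to leverage the quadratic relation $(Q_p-p)(Q_p+1)=0$, which follows from Theorem~\ref{thm:thm1Hecke} via the translation $q$ (as already recorded in the excerpt after Proposition~\ref{prop:trans1}), and then to show that the $p$-eigenspace of $Q_p$ consists entirely of old forms. Combined with the fact that $Q_p=\tilde U_p W_p$ preserves $S^{\mathrm{new}}_{2k}(\Gamma_0(N))$ (both $\tilde U_p$ and the Atkin--Lehner involution $W_p$ do), this will force $S^{\mathrm{new}}_{2k}(\Gamma_0(N))$ into the $(-1)$-eigenspace, giving the claim.

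Explicitly, write $S_{2k}(\Gamma_0(N))=E_p\oplus E_{-1}$ for the decomposition into $Q_p$-eigenspaces. The key step is to show $E_p\subseteq S^{\mathrm{old}}_{2k}(\Gamma_0(N))$. Let $f\in E_p$, so that after translation $\mathcal{U}_0\cdot\Phi_f=p\,\Phi_f$ in the adelic picture at the place $p$. By Lemma~\ref{lem:gen1} (or directly, using that $[K:K_0(p)]=p+1$ and that $K$ is the disjoint union of $K_0(p)$ and the double coset $K_0(p)w(1)K_0(p)$), the characteristic function of $K=\GL_2(\Z_p)$, suitably normalized, equals the idempotent $e_K=\tfrac{1}{p+1}(I+\mathcal{U}_0)$, which when convolved with a $K_0(p)$-fixed vector projects it onto the $K$-fixed subspace. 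Applying this to $\Phi_f$ gives
\[
e_K\cdot\Phi_f=\tfrac{1}{p+1}(\Phi_f+p\,\Phi_f)=\Phi_f,
\]
so $\Phi_f$ is $\GL_2(\Z_p)$-fixed at $p$. Hence $\Phi_f$ lies in the subspace $A_{2k}(N/p)\hookrightarrow A_{2k}(N)$ (forms whose level at $p$ is trivial), which under $q$ corresponds to $S_{2k}(\Gamma_0(N/p))\subseteq S^{\mathrm{old}}_{2k}(\Gamma_0(N))$. Thus $f\in S^{\mathrm{old}}_{2k}(\Gamma_0(N))$, as required.

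Combining, since $Q_p$ stabilises both $S^{\mathrm{new}}$ and $S^{\mathrm{old}}$ and $E_p\cap S^{\mathrm{new}}=0$, we conclude $S^{\mathrm{new}}_{2k}(\Gamma_0(N))\subseteq E_{-1}$, i.e.\ $Q_p(f)=-f$ for every newform $f$. The main technical point to verify carefully is the normalization: one must check that, with the Haar measure convention of Section~3 (where $K_0(p)$ has measure one) and the adelic action of Section~4, the Hecke element $\tfrac{1}{p+1}(I+\mathcal{U}_0)$ really implements the orthogonal projection onto $\GL_2(\Z_p)$-fixed vectors within the space of $K_0(p)$-fixed vectors. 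Everything else is formal once this bookkeeping is in place.
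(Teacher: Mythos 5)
Your proof is correct, but it takes a genuinely different route from the paper's. The paper's own argument is a two-line appeal to Atkin--Lehner: by \cite[Lemma 18]{A-L} one may take $f$ primitive, and then \cite[Theorem 3]{A-L} gives $W_p(f)=\lambda(p)f$ with $\lambda(p)=\pm1$ and $U_p(f)=-\lambda(p)p^{k-1}f$, so $Q_p(f)=p^{1-k}U_pW_pf=-\lambda(p)^2f=-f$. You instead exploit the Iwahori--Hecke algebra structure: the relation $(\mathcal{U}_0-p)(\mathcal{U}_0+1)=0$ gives the eigenspace decomposition $E_p\oplus E_{-1}$, and the identity $\tfrac{1}{p+1}(I+\mathcal{U}_0)=\tfrac{1}{\mathrm{vol}(K)}\mathbf{1}_K$ (valid since $K=K_0(p)\sqcup K_0(p)w(1)K_0(p)$ and $\mathrm{vol}(K_0(p))=1$) shows that $E_p$ is exactly the image of $S_{2k}(\Gamma_0(N/p))$, i.e.\ consists of old forms; this is consistent with, and in fact sharpens, Proposition~\ref{prop:prop1pM}, which only identifies the $p$-eigenspace inside $X_p$. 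Your bookkeeping on the normalization checks out. What each approach buys: the paper's proof is shorter but consumes the full strength of the $U_p$-eigenvalue formula for primitive forms at primes exactly dividing the level; yours needs only the weaker Atkin--Lehner input that $U_p$ and $W_p$ stabilize the new space (so that the $E_p$-component $\tfrac{1}{p+1}(Q_p+1)f$ of a newform $f$ is again a newform, hence lies in $S^{\mathrm{new}}\cap S^{\mathrm{old}}=0$), and it is more in the representation-theoretic spirit of the paper, essentially re-deriving Casselman's observation that the $p$-eigenvector of $\mathcal{U}_0$ spans the $K$-fixed line. Note you cannot drop the stabilization input entirely unless you invoke self-adjointness of $Q_p$ (Proposition~\ref{prop:prop2pM}) to get $E_{-1}=E_p^{\perp}\supseteq(S^{\mathrm{old}})^{\perp}=S^{\mathrm{new}}$; either patch is available in the paper.
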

\begin{proof}
By \cite[lemma 18]{A-L}, $S^\mathrm{new}_{2k}(\Gamma_0(N))$ has a basis of primitive forms,
so we can assume that $f$ is primitive. By \cite[Theorem 3]{A-L},
$W_p(f)=\lambda(p)f$ for some $\lambda(p)=\pm1$ and $U_p(f)=-\lambda(p)p^{k-1}f$.
Since $Q_p=p^{1-k}U_pW_p$ the result follows.
\end{proof}

Write $N=pM$ where $M$ is a square-free integer coprime to $p$.
\begin{lem}\label{lem:lem2pM}
Let f be a form in $S_{2k}(\Gamma_0(M)) \subset S_{2k}(\Gamma_0(N))$. 
Then $Q_p(f)=pf$.
\end{lem}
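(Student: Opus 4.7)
The plan is to compute $Q_p(f) = \tilde{U}_p W_p(f)$ directly by exploiting the fact that $f$, having level $M$ coprime to $p$, is invariant under the larger group $\Gamma_0(M) \supset \Gamma_0(N)$. The main step is to recognize that the Atkin-Lehner matrix defining $W_p$ can be decomposed, modulo the action of $\Gamma_0(M)$, as a pure shift $\mat{p}{0}{0}{1}$.

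Concretely, I would first rewrite the defining matrix of $W_p$ using $N = pM$ and $p^2\beta - N\gamma = p$ (so $p\beta - M\gamma = 1$), and then check the factorization
\[
\mat{p\beta}{1}{pM\gamma}{p} \;=\; \mat{\beta}{1}{M\gamma}{p}\,\mat{p}{0}{0}{1},
\]
where the first factor lies in $\SL_2(\Z)$ with lower-left entry divisible by $M$, hence in $\Gamma_0(M)$. Since $f \in S_{2k}(\Gamma_0(M))$ is invariant under this factor, the slash operator collapses to $W_p(f)(z) = f|_{2k}\mat{p}{0}{0}{1}(z) = p^k f(pz)$.

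Plugging into the definition of $\tilde{U}_p$ gives
\[
Q_p(f)(z) \;=\; \tilde{U}_p(W_p f)(z) \;=\; p^{-k}\sum_{s=0}^{p-1} p^k f\!\left(p\cdot\tfrac{z+s}{p}\right) \;=\; \sum_{s=0}^{p-1} f(z+s) \;=\; p\,f(z),
\]
using that $f$ has period $1$. An alternative verification, matching the adelic framework of Section~3, is that $\Phi_f$ is right-invariant under all of $K = \GL_2(\Z_p)$ because $f$ has level coprime to $p$; consequently $\mathcal{U}_0(\Phi_f)(g) = \int_{K_0(p) w(1) K_0(p)} \Phi_f(g)\,dx$ equals $\mu(K_0(p) w(1) K_0(p))\Phi_f(g) = p\,\Phi_f(g)$, since by Lemma~\ref{lem:rel3}(3) the double coset is a union of $p$ left $K_0(p)$-cosets.

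There is essentially no obstacle here: the only thing to be slightly careful about is the factorization of the Atkin-Lehner matrix and the correct normalization of the slash operator $j(g,z) = \det(g)^{-1/2}(cz+d)$ so that the factor of $p^k$ appears and then cancels against the $p^{-k}$ in $\tilde{U}_p$. The calculation is essentially a clean classical verification, with the adelic computation serving as a sanity check via Corollary~\ref{cor:trans3} applied with $n = 1$.
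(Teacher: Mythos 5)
Your proof is correct and is essentially the paper's own argument: both rest on the observation that $\mat{\beta}{1}{M\gamma}{p}\in\Gamma_0(M)$, the only cosmetic difference being that you first establish $W_p(f)(z)=p^kf(pz)$ (which is the content of the paper's later Lemma~\ref{lem:lem4pm}) and then apply $\tilde{U}_p$, whereas the paper substitutes $(z+s)/p$ into $W_p(f)$ directly. One trivial slip in your adelic aside: the count of $p$ left cosets for $K_0(p)w(1)K_0(p)$ comes from part~(4) of Lemma~\ref{lem:rel3} (with $n=0$), not part~(3).
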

\begin{proof}
Since $\mat{\beta}{1}{M\gamma}{p}\in \Gamma_0(M)$ we have
\begin{equation*}
\begin{split}
W_p(f)((z+s)/p) &=p^k(N\gamma(z+s)/p+p)^{-2k}
f\left( \frac{p\beta(z+s)/p+1}{N\gamma(z+s)/p +p}\right)\\
&=p^k(M\gamma(z+s)+p)^{-2k}
f\left( \frac{\beta(z+s)+1}{M\gamma(z+s)+p}\right) \\
&=p^k f|_{2k}\mat{\beta}{1}{M\gamma}{p}(z+s)=p^k f(z+s) =p^kf(z)
\end{split}
\end{equation*}
Hence
\[Q_p(f)= p^{-k}\sum_{s=0}^{p-1}W_p(f)((z+s)/p) = pf(z).\]
\end{proof}
Next we consider action of $Q_p$ on the old subspace $V(p)(S_{2k}(\Gamma_0(M)))$.
\begin{lem}\label{lem:lem3pM}
Let $f\in S_{2k}(\Gamma_0(M))$ and $g(z)=f(pz) \in V(p)(S_{2k}(\Gamma_0(M)))$. Then
\[
Q_p(g)=p^{1-2k}T_p(f)-g,\]
where $T_p$ is the usual Hecke operator on $S_{2k}(\Gamma_0(M))$.
\end{lem}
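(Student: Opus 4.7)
The plan is to compute $Q_p(g) = \tilde U_p W_p(g)$ directly and then recognize the resulting expression as $p^{1-2k}T_p(f)-g$.

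First I would compute $W_p(g)$. Let $\gamma' = \mat{p\beta}{1}{N\gamma}{p}$ with $p^2\beta-N\gamma = p$, so $p\beta - M\gamma = 1$. Then
\[g(\gamma' z) = f(p\gamma' z) = f\!\left(\frac{p(p\beta z+1)}{N\gamma z+p}\right) = f\!\left(\frac{p\beta z+1}{M\gamma z+1}\right),\]
after cancelling a factor of $p$ from numerator and denominator (using $N = pM$). The key observation is that the matrix $\delta = \mat{p\beta}{1}{M\gamma}{1}$ lies in $\Gamma_0(M)$: its determinant is $p\beta - M\gamma = 1$, and its lower-left entry is divisible by $M$. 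Since $f$ is a weight-$2k$ modular form for $\Gamma_0(M)$, applying $f|_{2k}\delta = f$ and unwinding the $j$-cocycle gives
\[g(\gamma' z) = (M\gamma z+1)^{2k}\, f(z).\]
Combining this with $j(\gamma',z)^{-2k} = p^{k}(N\gamma z+p)^{-2k} = p^{k}p^{-2k}(M\gamma z+1)^{-2k}$, I get
\[W_p(g)(z) = p^{-k} f(z).\]

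Next I apply $\tilde U_p$:
\[Q_p(g)(z) = p^{-k}\sum_{s=0}^{p-1} W_p(g)\!\left(\tfrac{z+s}{p}\right) = p^{-2k}\sum_{s=0}^{p-1} f\!\left(\tfrac{z+s}{p}\right).\]
Finally I invoke the classical formula (valid since $p \nmid M$)
\[T_p(f)(z) = p^{2k-1}f(pz) + p^{-1}\sum_{s=0}^{p-1} f\!\left(\tfrac{z+s}{p}\right),\]
which rearranges to $p^{1-2k}T_p(f)(z) = p^{-2k}\sum_{s=0}^{p-1}f((z+s)/p) + f(pz)$. Since $g(z) = f(pz)$, this yields $Q_p(g) = p^{1-2k}T_p(f) - g$, as required.

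The only nontrivial step is the $W_p$ computation, and the only subtlety there is spotting that after cancellation the matrix $\delta = \mat{p\beta}{1}{M\gamma}{1}$ lies in $\Gamma_0(M)$ (not just $\Gamma_0(N)$); this is what lets us replace $f(\delta z)$ by $f(z)(M\gamma z + 1)^{2k}$ and collapse $W_p(g)$ to a scalar multiple of $f$. Everything else is bookkeeping with the weight-$2k$ slash and the normalizations of $\tilde U_p$ and $T_p$.
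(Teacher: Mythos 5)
Your proof is correct and follows essentially the same route as the paper's: both hinge on observing that after cancelling the factor of $p$ the matrix $\mat{p\beta}{1}{M\gamma}{1}$ lies in $\Gamma_0(M)$, which collapses $W_p(g)$ to $p^{-k}f$, and both then finish with the Atkin--Lehner formula $p^{1-2k}T_p(f)=f(pz)+p^{-2k}\sum_{s=0}^{p-1}f((z+s)/p)$. The only cosmetic difference is that you establish $W_p(g)(z)=p^{-k}f(z)$ as a standalone identity before applying $\tilde U_p$, whereas the paper evaluates $W_p(g)$ directly at the points $(z+s)/p$.
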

\begin{proof}
Note that from \cite[Lemma 14]{A-L},
\[p^{1-2k}T_p(f) = f(pz) + p^{-2k}\sum_{s=0}^{p-1}f((z+s)/p).\]
As before we have
\begin{equation*}
\begin{split}
W_p(g)((z+s)/p) &=p^k(N\gamma(z+s)/p+p)^{-2k}
g\left( \frac{p\beta(z+s)/p+1}{N\gamma(z+s)/p +p}\right)\\
&=p^k(M\gamma(z+s)+p)^{-2k}
f\left( \frac{p\beta(z+s)+p}{M\gamma(z+s)+p}\right) \\
&=p^{-k}(M\gamma(z+s)/p+1)^{-2k}
f\left( \frac{p\beta(z+s)/p+1}{M\gamma(z+s)/p+1}\right)\\
&=p^{-k}f|_{2k}\mat{p\beta}{1}{M\gamma}{1}((z+s)/p)
=p^{-k}f((z+s)/p)
\end{split}
\end{equation*}
since $\mat{p\beta}{1}{M\gamma}{1}\in \Gamma_0(M)$.
Thus
\[Q_p(g)(z)
=p^{-2k}\sum_{s=0}^{p-1} f((z+s)/p)=p^{1-2k}T_p(f)-g.\]
\end{proof}

Let $X_p:=S_{2k}(\Gamma_0(M)) \oplus V(p)S_{2k}(\Gamma_0(M))$ be the subspace 
of $S_{2k}(\Gamma_0(N))$.
\begin{cor} \label{cor:eigspcpM}
$Q_p$ stabilizes $X_p$ and the $-1$ eigenspace of $Q_p$ inside $X_p$ consists of forms
$h(z)=-\frac{p^{1-2k}}{p+1}T_{p}(f)(z)+f(pz)$ where $f\in S_{2k}(\Gamma_0(M))$.
\end{cor}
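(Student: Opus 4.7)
The plan is to combine the two preceding lemmas in a direct fashion. By Lemma~\ref{lem:lem2pM}, $Q_p$ maps any $f \in S_{2k}(\Gamma_0(M))$ to $pf \in S_{2k}(\Gamma_0(M)) \subset X_p$. By Lemma~\ref{lem:lem3pM}, $Q_p$ sends $g(z) = f(pz) \in V(p)S_{2k}(\Gamma_0(M))$ to $p^{1-2k}T_p(f) - g$, where $T_p(f) \in S_{2k}(\Gamma_0(M))$ and $g \in V(p)S_{2k}(\Gamma_0(M))$, so this image lies in $X_p$ as well. Hence $Q_p$ stabilizes $X_p$.

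For the eigenspace computation, I would write any $h \in X_p$ uniquely as $h(z) = f_1(z) + f_2(pz)$ with $f_1, f_2 \in S_{2k}(\Gamma_0(M))$. Uniqueness holds because $S_{2k}(\Gamma_0(M)) \cap V(p)S_{2k}(\Gamma_0(M)) = 0$, a standard fact from the theory of old forms verified for instance by comparing $q$-expansions: a form with $q$-expansion supported on multiples of $p$ which also lies in $S_{2k}(\Gamma_0(M))$ must vanish. Applying $Q_p$ and using both lemmas,
\[Q_p(h)(z) = p f_1(z) + p^{1-2k}T_p(f_2)(z) - f_2(pz).\]
Setting $Q_p(h) = -h$ and projecting onto the two summands gives the identity $(p+1)f_1 = -p^{1-2k}T_p(f_2)$ from the $S_{2k}(\Gamma_0(M))$ component, while the $V(p)S_{2k}(\Gamma_0(M))$ component yields the trivial equality $-f_2 = -f_2$. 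Solving the first equation forces $f_1 = -\frac{p^{1-2k}}{p+1}T_p(f_2)$, and substituting $f = f_2$ produces the claimed form of $h$.

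The only nontrivial point, and hence the main potential obstacle, is the direct-sum decomposition underlying the uniqueness of the expression $h = f_1 + V(p)f_2$. Once this is in hand, the argument is essentially a two-by-two linear algebra computation on the coordinates $(f_1, f_2)$. It is worth noting that because the $V(p)$-component of the eigenvalue equation is automatic, there is no constraint whatsoever on $f_2$, which explains why the $-1$-eigenspace inside $X_p$ is parametrized by an arbitrary $f \in S_{2k}(\Gamma_0(M))$ via the formula in the statement.
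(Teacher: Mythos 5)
Your proposal is correct and follows essentially the same route as the paper: decompose $h$ uniquely as $f_1 + V(p)f_2$, apply Lemmas~\ref{lem:lem2pM} and~\ref{lem:lem3pM}, and read off the two components of the eigenvalue equation. (Your componentwise computation even cleans up a small typo in the paper's intermediate step, where $p-1$ appears in place of $p+1$ before the correct final formula.)
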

\begin{proof}
Let $h \in X_p$ be an old form. Then
$h$ can be uniquely written as $h(z)=f_1(z)+g(z)$ where $g(z)=f(pz)$ for some $f,\ f_1\in S_{2k}(\Gamma_0(M))$.
By Lemma~\ref{lem:lem2pM} and Lemma~\ref{lem:lem3pM} we have $Q_p(h)=pf_1 + p^{1-2k}T_p(f)-g$ which is clearly in $X_p$. 

Further since the above decomposition
for $Q_p(h)$ is unique, if $f_1(z)+g(z)$ is an eigenfunction of $Q_p$ with $g \ne 0$ then $Q_p(h)=-h$ and 
$f_1= -\frac{p^{1-2k}}{p-1}T_p(f)$.
Hence $-1$ eigenspace of $Q_p$ consists of forms
$h(z)=-\frac{p^{1-2k}}{p+1}T_{p}(f)(z)+f(pz)$ for some $f\in S_{2k}(\Gamma_0(M))$.
\end{proof}

From Lemma~\ref{lem:lem2pM} and Corollary~\ref{cor:eigspcpM} to obtain following proposition.
\begin{prop}\label{prop:prop1pM}
The $p$ eigenspace of $Q_p$ in $X_p$ is $S_{2k}(\Gamma_0(M))$.
\end{prop}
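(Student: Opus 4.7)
The plan is to combine the preceding two lemmas with a direct-sum decomposition of $X_p$. By Lemma~\ref{lem:lem2pM} the subspace $S_{2k}(\Gamma_0(M))$ already lies inside the $p$-eigenspace of $Q_p$ acting on $X_p$, so only the reverse inclusion needs justification.

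First I would record that the sum $X_p = S_{2k}(\Gamma_0(M)) + V(p)S_{2k}(\Gamma_0(M))$ is actually direct. This is a standard Atkin--Lehner fact when $p\nmid M$: any element of $V(p)S_{2k}(\Gamma_0(M))$ has $q$-expansion at $\infty$ supported on exponents divisible by $p$, and the injectivity of $V(p)$ together with the linear independence of the oldform embeddings precludes a nonzero form of this kind from simultaneously lying in $S_{2k}(\Gamma_0(M))$.

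Given directness, for any $h \in X_p$ write $h = f_1 + f(pz)$ uniquely with $f_1, f \in S_{2k}(\Gamma_0(M))$. Lemmas~\ref{lem:lem2pM} and~\ref{lem:lem3pM} then give
\[
Q_p(h) \;=\; p f_1 \;+\; p^{1-2k} T_p(f) \;-\; f(pz).
\]
Imposing $Q_p(h) = p h = p f_1 + p f(pz)$ and matching the components in the direct sum yields the two equations $p^{1-2k} T_p(f) = 0$ and $-f(pz) = p f(pz)$. The latter forces $f(pz) = 0$, hence $f = 0$, so $h = f_1 \in S_{2k}(\Gamma_0(M))$, completing the reverse inclusion.

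The only real obstacle is the direct-sum assertion above; if one prefers to avoid it, an equivalent clean route uses the minimal-polynomial identity $(Q_p - p)(Q_p + 1) = 0$ from the corollary to Proposition~\ref{prop:trans1} so that $Q_p$ diagonalises on $X_p$ with eigenvalues in $\{p, -1\}$, after which combining $\dim X_p = 2\dim S_{2k}(\Gamma_0(M))$ with the explicit description of the $-1$-eigenspace in Corollary~\ref{cor:eigspcpM} (whose dimension equals $\dim S_{2k}(\Gamma_0(M))$) pins down the $p$-eigenspace to have dimension exactly $\dim S_{2k}(\Gamma_0(M))$, and Lemma~\ref{lem:lem2pM} then forces equality.
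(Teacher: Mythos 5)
Your proof is correct and follows essentially the same route as the paper: the paper's argument (via Corollary~\ref{cor:eigspcpM}) likewise writes $h=f_1+f(pz)$ uniquely, applies Lemmas~\ref{lem:lem2pM} and~\ref{lem:lem3pM} to get $Q_p(h)=pf_1+p^{1-2k}T_p(f)-f(pz)$, and observes that an eigenvector with nonzero $V(p)$-component must have eigenvalue $-1$, so the $p$-eigenspace is exactly $S_{2k}(\Gamma_0(M))$. Your component-matching (the equation $-f(pz)=pf(pz)$ forcing $f=0$) is the same observation, and the directness of the sum you justify is exactly the uniqueness of the decomposition the paper invokes.
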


Next consider the operator $Q_p'=W_{p} \tilde{U}_p$. So, 
$Q_p' = W_pQ_pW_p^{-1} =W_pQ_pW_p$ and $Q_p'$ satisfies the equation $(Q_p'-p)(Q_p'-1)=0$.
Note that $f$ is an eigenfunction of $Q_p$ with eigenvalue $\lambda$ if and only if 
$W_p(f)$ is an eigenfunction of $Q_p'$ with eigenvalue $\lambda$.
Since the action of Atkin-Lehner operator $W_p$ on the space of new forms is surjective,
$Q_p'$ acts with the eigenvalue $-1$ on the space of new forms. We have the following lemma.
\begin{lem}\label{lem:lem4pm}
Let $f \in S_{2k}(\Gamma_0(M))$. Then $W_p(f)(z) = p^kf(pz)$. 
Further if $g=f(pz)$, then
$W_p(g)(z) = p^{-k}f(z)$. Consequently $W_p$ maps $S_{2k}(\Gamma_0(M))$ onto 
$V(p)S_{2k}(\Gamma_0(M))$, so $V(p)S_{2k}(\Gamma_0(M))$ is contained in the
$p$ eigenspace of $Q_p'$. Further $Q_p'$ preserves $X_p$ and the 
$p$ eigenspace of $Q_p'$ in $X_p$ is the space $V(p)(S_{2k}(\Gamma_0(M))$.
\end{lem}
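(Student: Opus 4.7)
The plan is to first compute $W_p(f)$ directly via a clean factorization of the Atkin-Lehner matrix, then do the analogous computation for $W_p(g)$, and finally deduce the eigenspace statement from the conjugation identity $Q_p' = W_p Q_p W_p^{-1}$ together with Proposition~\ref{prop:prop1pM}. Since $p^{2}\beta - N\gamma = p$ with $N = pM$, we have $p\beta - M\gamma = 1$, which gives the factorization
\[
\mat{p\beta}{1}{pM\gamma}{p} = \mat{\beta}{1}{M\gamma}{p}\mat{p}{0}{0}{1}.
\]
The first factor has determinant $p\beta - M\gamma = 1$ and lower-left entry divisible by $M$, so it lies in $\Gamma_{0}(M) \cap \mathrm{SL}_2(\Z)$ and acts trivially on $f$ under $|_{2k}$. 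The second factor contributes $(p^{-1/2})^{-2k} = p^{k}$ together with the substitution $z\mapsto pz$. Hence $W_{p}(f)(z) = p^{k}f(pz)$.

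For $g(z)=f(pz)$, I would expand $W_{p}(g)(z)$ directly from the definition of the slash operator, exactly in the style of Lemma~\ref{lem:lem3pM}. Setting $w=(p\beta z+1)/(pM\gamma z+p)$ one has $pw=(p\beta z+1)/(M\gamma z+1)$, so $g(w)=f(pw)$ and gathering the $p$-factors gives
\[
W_{p}(g)(z) = p^{-k}\,f\big|_{2k}\mat{p\beta}{1}{M\gamma}{1}(z).
\]
The matrix $\mat{p\beta}{1}{M\gamma}{1}$ has determinant $p\beta - M\gamma = 1$ and lower-left entry divisible by $M$, hence lies in $\Gamma_{0}(M)$ and fixes $f$. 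This yields $W_{p}(g)(z) = p^{-k}f(z)$.

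The first identity reads $W_{p}(f) = p^{k}V(p)(f)$, so $W_{p}$ sends $S_{2k}(\Gamma_{0}(M))$ \emph{onto} $V(p)S_{2k}(\Gamma_{0}(M))$; the second shows that $W_{p}$ sends $V(p)S_{2k}(\Gamma_{0}(M))$ back into $S_{2k}(\Gamma_{0}(M))$. In particular $W_{p}$ preserves $X_{p}$, and since $Q_{p}$ preserves $X_{p}$ by Corollary~\ref{cor:eigspcpM}, so does $Q_{p}' = W_{p}Q_{p}W_{p}^{-1}$. The conjugation relation gives, for each $\lambda$, a bijection (implemented by $W_{p}$) between the $\lambda$-eigenspace of $Q_{p}$ in $X_{p}$ and the $\lambda$-eigenspace of $Q_{p}'$ in $X_{p}$. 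Applying this with $\lambda = p$ and invoking Proposition~\ref{prop:prop1pM}, the $p$-eigenspace of $Q_{p}'$ in $X_{p}$ is $W_{p}(S_{2k}(\Gamma_{0}(M))) = V(p)S_{2k}(\Gamma_{0}(M))$, as claimed.

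There is no substantive obstacle: the work lies entirely in spotting the matrix factorizations and in keeping track of the powers of $p$ arising from the $\det^{-1/2}$ convention in the slash operator. The complementary scalars $p^{k}$ and $p^{-k}$ in the two identities are forced by these normalizations and are consistent with the involutive nature of $W_{p}$ on $S_{2k}(\Gamma_{0}(N))$.
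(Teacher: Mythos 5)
Your proposal is correct and follows essentially the same route as the paper: both identities come from recognizing the unimodular matrices $\left(\begin{smallmatrix}\beta&1\\M\gamma&p\end{smallmatrix}\right)$ and $\left(\begin{smallmatrix}p\beta&1\\M\gamma&1\end{smallmatrix}\right)$ in $\Gamma_0(M)$ (your factorization of $W_p$ is just a repackaging of the paper's direct expansion), and the eigenspace claim is deduced exactly as in the paper by conjugating Proposition~\ref{prop:prop1pM} through $W_p$.
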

\begin{proof}
Since $\mat{\beta}{1}{M\gamma}{p}\in \Gamma_0(M)$ we get
\[W_p(f)(z) = f|_{2k}\mat{p\beta}{1}{N\gamma}{p}(z) =p^k(M\gamma(pz) +p)^{-2k} f\left( \frac{\beta(pz)+1}{M\gamma(pz) +p}\right)\]
\[= p^k f|_{2k}\mat{\beta}{1}{M\gamma}{p}(pz) = p^k f(pz).\]
Further, since $\mat{p\beta}{1}{M\gamma}{1}\in \Gamma_0(M)$ we get
\[W_p(g)(z) = g|_{2k}\mat{p\beta}{1}{N\gamma}{p}(z) =p^k(N\gamma z +p)^{-2k} f\left( \frac{p^2\beta z+p}{N\gamma z +p}\right)\]
\[= p^{-k}(M\gamma z +1)^{-2k} f\left( \frac{p\beta z+1}{M\gamma z +1}\right) = p^{-k} f(z).\]
Hence $W_p(X_p) = X_p$ and so $Q_p'$ preserves $X_p$. It now follows from Proposition~\ref{prop:prop1pM} that 
$p$ eigenspace of $Q_p'$ in $X_p$ is precisely the space $V(p)(S_{2k}(\Gamma_0(M))$.
\end{proof}

We shall need the following proposition.
\begin{prop}\label{prop:prop2pM}
The operators $Q_p=\tilde{U}_pW_p$ and $Q_p'= W_pQ_pW_p^{-1}$ are self-adjoint with respect to Petersson inner product.
\end{prop}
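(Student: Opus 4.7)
The plan is to reduce the claim to two classical facts about the Atkin--Lehner operator and the operator $\tilde U_p = p^{1-k}U_p$ on $S_{2k}(\Gamma_0(N))$, namely:
\begin{enumerate}
\item[(A)] $W_p$ is self-adjoint: $\langle W_p f, g\rangle = \langle f, W_p g\rangle$ for all $f,g\in S_{2k}(\Gamma_0(N))$, and $W_p^2=I$;
\item[(B)] The adjoint of $\tilde U_p$ equals $W_p\tilde U_p W_p$.
\end{enumerate}
Both are established in Atkin--Lehner; (A) is proved by the direct substitution $\tau\mapsto W_p\tau$ in the Petersson integral, using that the Haar measure $y^{-2}dx\,dy$ on $\Hup$ is $\GL_2(\R)^+$-invariant and that $W_p$ normalises $\Gamma_0(N)$; (B) is the classical identity $U_p^*=W_p U_p W_p$, proved by matching the coset decomposition of $U_p$ with its conjugate under $W_p$.

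Granting (A) and (B), the statement follows by pure operator calculus. Using $W_p^*=W_p$ and $W_p^2=I$,
\[
Q_p^*\;=\;(\tilde U_p W_p)^*\;=\;W_p^*\,\tilde U_p^*\;=\;W_p\,(W_p\tilde U_p W_p)\;=\;\tilde U_p W_p\;=\;Q_p,
\]
and similarly
\[
(Q_p')^*\;=\;(W_p\tilde U_p)^*\;=\;\tilde U_p^*\,W_p^*\;=\;(W_p\tilde U_p W_p)\,W_p\;=\;W_p\tilde U_p\;=\;Q_p'.
\]
This handles both assertions simultaneously.

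The main obstacle is (B). A self-contained derivation can be given adelically, which is the more natural route in view of the machinery already developed. By Proposition~\ref{prop:trans1} and Lemma~\ref{lem:rel4}(6), $Q_p=q(\mathcal{U}_0)$ where $\mathcal{U}_0=X_{w(1)}\in H(G//K_0(p))$. Transferring the Petersson inner product to $A_{2k}(N)$ through Gelbart's isomorphism, the adjoint of a convolution operator $X_g$ is $X_{g^{-1}}$, so $\mathcal{U}_0$ is self-adjoint iff $w(1)^{-1}\in K_0(p)w(1)K_0(p)$. Since $w(1)^{-1}=(-I)\,w(1)$ and $-I\in K_0(p)$, this holds, giving $Q_p^*=Q_p$ directly. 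For $Q_p'$ one then uses $Q_p'=W_pQ_pW_p^{-1}=W_pQ_pW_p$ together with (A), or alternatively notes $Q_p'=q(\mathcal{U}_1*\mathcal{T}_1)$ and repeats the adelic symmetry argument. The technical point to verify in this route is that Gelbart's isomorphism is an isometry (up to a positive constant depending only on normalisation) relating the Petersson inner product to the $L^2$-inner product on $Z_\A\GL_2(\Q)\backslash\GL_2(\A)$; this is a routine measure-theoretic check using strong approximation and the unfolding $\Gamma_0(N)\backslash\Hup\to Z_\A\GL_2(\Q)\backslash\GL_2(\A)/K_f(N)\mathrm{SO}(2)$.
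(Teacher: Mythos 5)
Your main route is essentially the paper's own proof: both reduce the claim to (A) $W_p$ being a self-adjoint involution and (B) the adjoint identity $\tilde U_p^{\,*}=W_p\tilde U_pW_p$, and then conclude by the same two-line operator calculus. The only difference in substance is that the paper does not cite (B) but proves it, via the explicit double-coset identity $\Gamma_0(N)W_p^{-1}\mat{1}{0}{0}{p}W_p\Gamma_0(N)=\Gamma_0(N)\mat{p}{0}{0}{1}\Gamma_0(N)$ (exhibiting a matrix $\mat{1}{0}{Nt}{1}$ with $t\equiv\beta M^{-1}\pmod p$ that witnesses the coset equality), which is exactly the ``matching of coset decompositions'' you wave at; if you lean on Atkin--Lehner for $U_p^*=W_pU_pW_p$ you should give a precise reference, since it is not stated there in that form. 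Your alternative adelic argument is a genuinely different and cleaner route: $Q_p=q(\mathcal U_0)$ and $w(1)^{-1}=(-I)w(1)$ with $-I\in K_0(p)$ makes the double coset of $\mathcal U_0$ symmetric, so self-adjointness is immediate. What it buys is the avoidance of any matrix computation; what it costs is the isometry (up to a positive constant) between the Petersson product and the $L^2$ product on $Z_\A\GL_2(\Q)\backslash\GL_2(\A)$, which the paper never establishes (Gelbart's Proposition 3.1 as quoted only gives the linear bijection), so that ``routine check'' would genuinely need to be written out before this route is complete.
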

\begin{proof}
Recall that $\tilde{U}_p= p^{1-k}U_p = p^{1-k}T_p$ where $T_p$ is the usual Hecke operator on $S_{2k}(\Gamma_0(N))$.
Following Miyake~\cite[Page 135]{Miyake}
\[T_p = \Gamma_0(N) \mat{1}{0}{0}{p} \Gamma_0(N)\] and
for $f \in S_{2k}(\Gamma_0(N))$
 \[T_p(f)= f|_{2k} \Gamma_0(N) \mat{1}{0}{0}{p} \Gamma_0(N) =
p^{k-1} \sum_{m=0}^{p-1} f|_{2k} {\mat{1}{m}{0}{p}}.\]
Further,
\[T_p^* = \Gamma_0(N) \mat{p}{0}{0}{1} \Gamma_0(N)\] and
\[T_p^*(f)= f|_{2k} \Gamma_0(N) \mat{p}{0}{0}{1} \Gamma_0(N) =
p^{k-1} \sum_{m=0}^{p-1} f|_{2k} {\mat{p}{-m}{0}{1}}.\]
Thus for $f,\ g \in S_{2k}(\Gamma_0(N))$, by \cite[Theorem 2.8.2]{Miyake}, $\langle T_p(f),\ g \rangle = \langle f,\ T_p^*(g) \rangle$.

The Atkin-Lehner operator $W_p$ acts by a matrix $\mat{p\beta}{1}{N\gamma}{p}$ such that $p^2 \beta - N\gamma = p$. We
want to show that the following diagram commutes:
\[
\begin{tikzcd}
S_{2k}(\Gamma_0(N)) \arrow{r}{T_p} \arrow[swap]{d}{W_p} & S_{2k}(\Gamma_0(N)) \arrow{d}{W_p} \\
S_{2k}(\Gamma_0(N)) \arrow{r}{T_p^*} & S_{2k}(\Gamma_0(N))
\end{tikzcd}
\]
We have
\begin{equation}\label{eq:eq1}
\begin{split}
f|_{2k}W_{p}^{-1}T_pW_p &= f|_{2k} W_{p}^{-1}\Gamma_0(N) \mat{1}{0}{0}{p} \Gamma_0(N)W_p \\
&=f|_{2k} \Gamma_0(N)W_{p}^{-1} \mat{1}{0}{0}{p} W_p\Gamma_0(N)
\end{split}
\end{equation}
since $W_p\Gamma_0(N)W_{p}^{-1} = \Gamma_0(N)$.

We claim that $\Gamma_0(N)W_{p}^{-1} \mat{1}{0}{0}{p} W_p\Gamma_0(N) = 
\Gamma_0(N) \mat{p}{0}{0}{1} \Gamma_0(N)$. We note that
\[W_{p}^{-1} \mat{1}{0}{0}{p} W_p = 
\mat{p\beta-N\gamma}{1-p}{-N\gamma\beta+pN\gamma\beta}
{-\frac{N\gamma}{p}+\beta p^2}.\]

Choose $t \in \Z$ such that $t \equiv \beta M^{-1} \pmod{p}$ and 
consider the matrix $\mat{1}{0}{Nt}{1}$ in $\Gamma_0(N)$. Then,
\[
 W_{p}^{-1} \mat{1}{0}{0}{p} W_p\cdot \mat{1}{0}{Nt}{1}\cdot \mat{p}{0}{0}{1}^{-1}
\]
\[=\mat{p\beta-N\gamma}{1-p}{-N\gamma\beta+pN\gamma\beta}{-\frac{N\gamma}{p}+\beta p^2} \cdot \mat{\frac{1}{p}}{0}{\frac{Nt}{p}}{1}\]
\[=\mat{*}{*}{-N\gamma\left(\frac{\beta- Mt }{p}\right)+N\gamma\beta+\beta p Nt}{*} \in \Gamma_0(N). \]
Hence $W_{p}^{-1} \mat{1}{0}{0}{p} W_p \in \Gamma_0(N) \mat{p}{0}{0}{1} \Gamma_0(N)$ and our claim is proved.

Thus from \eqref{eq:eq1}, we have $f|_{2k}W_{p}^{-1}T_pW_p = f|_{2k}\Gamma_0(N) \mat{p}{0}{0}{1} \Gamma_0(N)=T_p^*(f)$. Using this we get that
\begin{equation*}
\begin{split}
\langle Q_p(f),\ g \rangle &= p^{1-k}\langle T_pW_p(f),\ g \rangle \\
&=p^{1-k}\langle W_p(f),\ T_p^*(g) \rangle \\
&=p^{1-k}\langle W_p(f),\ W_{p}T_pW_p^{-1}(g) \rangle \\
&=p^{1-k}\langle f,\ T_pW_p^{-1}(g) \rangle \\
&=p^{1-k}\langle f,\ T_pW_p(g) \rangle = \langle f,\ Q_p(g) \rangle,
\end{split}
\end{equation*}
since $W_p$ is self-adjoint and it is an involution on $S_{2k}(\Gamma_0(N))$. Hence $Q_p$ and consequently $Q_p'$ are self-adjoint.
\end{proof}
We now restate Theorem~\ref{thm:sec2thm1} and prove it below. 
\begin{thm}\label{thm:pM}
Let $N=p_1p_2\cdots p_r$ with $p_i$ distinct primes. Then 
the space of new forms $S_{2k}^{\mathrm{new}}(\Gamma_0(N))$ is the intersection of
the $-1$ eigenspaces of $Q_{p_i}$ and $Q_{p_i}'$ as $1 \le i \le r$. That is, $f \in S_{2k}^{\mathrm{new}}(\Gamma_0(N))$
if and only if $Q_{p_i}(f)=-f =Q_{p_i}'(f)$ for all $1 \le i \le r$.
\end{thm}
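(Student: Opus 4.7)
The plan has two parts. For the forward direction, consider a primitive newform $f$ of level $N$. The Atkin-Lehner theory (as used in the proof of Lemma~\ref{lem:lem1pM}) gives $W_p(f) = \lambda f$ with $\lambda = \pm 1$ and $U_p(f) = -\lambda p^{k-1} f$, whence $Q_p(f) = p^{1-k} U_p(\lambda f) = -\lambda^2 f = -f$ and similarly $Q_p'(f) = p^{1-k} W_p(-\lambda p^{k-1} f) = -f$. Since $S^{\mathrm{new}}_{2k}(\Gamma_0(N))$ is spanned by primitive forms, both relations extend to the full new space by linearity.

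For the reverse direction I exploit the self-adjointness of $Q_p$ and $Q_p'$ (Proposition~\ref{prop:prop2pM}), which implies that both operators preserve the orthogonal decomposition $S_{2k}(\Gamma_0(N)) = S^{\mathrm{new}}_{2k}(\Gamma_0(N)) \oplus S^{\mathrm{old}}_{2k}(\Gamma_0(N))$. If $f$ is a simultaneous $-1$-eigenvector, then writing $f = f^{\mathrm{new}} + f^{\mathrm{old}}$ reduces the task to showing $f^{\mathrm{old}} = 0$. By the Atkin-Lehner direct sum decomposition,
\[
S^{\mathrm{old}}_{2k}(\Gamma_0(N)) = \bigoplus_{\substack{M \mid N \\ M < N}} \bigoplus_{g} W_g, \qquad W_g := \bigoplus_{d \mid N/M} \C\, V(d) g,
\]
with $g$ ranging over primitive forms of level $M$. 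Each $W_g$ is stable under every $Q_p$ and $Q_p'$: for $p \mid M$ the forward computation applied to $V(d) g$ shows both act as $-1$ on $W_g$, and for $p \mid N/M$ the $p$-block analysis below furnishes the stability. It therefore suffices to show the simultaneous $-1$-eigenspace is trivial inside each $W_g$.

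Fix any prime $p \mid N/M$, which exists since $M < N$. Then $W_g$ decomposes as a direct sum of two-dimensional blocks $X_{g,d} := \C\, V(d) g \oplus \C\, V(pd) g$, indexed by squarefree $d \mid (N/M)/p$. A calculation using Lemma~\ref{lem:lem2pM}, Lemma~\ref{lem:lem3pM}, Lemma~\ref{lem:lem4pm}, and $T_p(g) = a_p(g)\, g$ (valid because $p \nmid M$) yields, in the ordered basis $(V(d) g, V(pd) g)$,
\[
Q_p|_{X_{g,d}} = \begin{pmatrix} p & p^{1-2k} a_p(g) \\ 0 & -1 \end{pmatrix}, \qquad Q_p'|_{X_{g,d}} = \begin{pmatrix} -1 & 0 \\ p\, a_p(g) & p \end{pmatrix}.
\]
Both matrices have spectrum $\{p, -1\}$, and their $-1$-eigenlines coincide if and only if $a_p(g)^2 = p^{2k-2}(p+1)^2$. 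The Deligne bound $|a_p(g)| \le 2 p^{k-1/2}$ for a primitive form of level coprime to $p$ excludes this, since $p^{k-1}(p+1) > 2 p^{k-1/2}$ for every prime $p$. Hence the simultaneous $-1$-eigenspace meets $X_{g,d}$, and therefore $W_g$, only in zero.

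The main obstacle is the verification of the $Q_p'$ matrix on $X_{g,d}$: one must apply Lemma~\ref{lem:lem4pm} at the coprime-to-$p$ level $Md$ to obtain $W_p(V(d) g) = p^k V(pd) g$ and $W_p(V(pd) g) = p^{-k} V(d) g$, then compose with $\tilde{U}_p$ (whose $X_{g,d}$-action follows from $\tilde{U}_p = Q_p W_p$ combined with the $Q_p$ formula). The Deligne bound is the deep numerical input that closes the argument.
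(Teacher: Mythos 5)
Your proposal is correct, and its forward direction coincides with the paper's (Lemma~\ref{lem:lem1pM} plus surjectivity of $W_p$ on the new space), but your converse takes a genuinely different and heavier route. The paper never decomposes the old space into newform blocks: it simply notes that $S_{2k}(\Gamma_0(N/p))$ lies in the $p$ eigenspace of $Q_p$ (Proposition~\ref{prop:prop1pM}) and $V(p)S_{2k}(\Gamma_0(N/p))$ lies in the $p$ eigenspace of $Q_p'$ (Lemma~\ref{lem:lem4pm}), so by self-adjointness (Proposition~\ref{prop:prop2pM}) a simultaneous $-1$ eigenvector is orthogonal to both subspaces for every $p\mid N$, hence to all of $S^{\mathrm{old}}_{2k}(\Gamma_0(N))$ by \eqref{eq:1}. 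Your $2\times 2$ matrices for $Q_p$ and $Q_p'$ on $X_{g,d}=\C\,V(d)g\oplus\C\,V(pd)g$ are accurate (they are exactly what Lemmas~\ref{lem:lem2pM}, \ref{lem:lem3pM} and \ref{lem:lem4pm} yield), but separating the two $-1$ eigenlines forces you to invoke Deligne's bound to exclude $a_p(g)^2=p^{2k-2}(p+1)^2$. The orthogonality argument gets that separation for free: a simultaneous $-1$ eigenvector in $X_{g,d}$ is orthogonal to $V(d)g$ (a $p$ eigenvector of $Q_p$) and to $V(pd)g$ (a $p$ eigenvector of $Q_p'$), hence to all of $X_{g,d}$, with no arithmetic input. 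So Ramanujan--Petersson is an avoidable sledgehammer here, and avoiding it is what makes the paper's method portable to the higher-level operators $S_{p^n,r}$ later on. Two smaller points you should write out: the claim that $Q_p$ and $Q_p'$ act as $-1$ on $W_g$ for $p\mid M$ requires the commutation of $\tilde{U}_p$ and $W_{p,N}$ with $V(d)$ for $(d,p)=1$, which the paper only establishes in the lemma preceding the proof of Theorem~\ref{thm:pMsec}; and the reduction to a single block $W_g$ (and then to a single $X_{g,d}$) rests on the stability of each block under all of the operators, which deserves an explicit sentence.
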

\begin{proof}
We have already seen that if $f\in S_{2k}(\Gamma_0(N))$
then $Q_{p_i}(f)=-f =Q_{p_i}'(f)$ for all $1 \le i \le r$. 

Further it follows from Proposition~\ref{prop:prop1pM} and Lemma~\ref{lem:lem4pm} that for each $p_i$, the subspace
$S_{2k}(\Gamma_0(N/{p_i}))$ is contained in the $p_i$ eigenspace of $Q_{p_i}$ and
$V(p_i)S_{2k}(\Gamma_0(N/{p_i}))$ is contained in the $p_i$ eigenspace of $Q_{p_i}'$.

Suppose $f\in S_{2k}(\Gamma_0(N))$ is such that $Q_{p_i}(f)=-f =Q_{p_i}'(f)$ for all $1 \le i \le r$.
Since $Q_{p_i}$ and $Q_{p_i}'$ are self-adjoint operators on $S_{2k}(\Gamma_0(N))$ we get that
$p_i$ eigenspaces of $Q_{p_i}$ and $Q_{p_i}'$ are respectively orthogonal to $-1$ eigenspaces of 
$Q_{p_i}$ and $Q_{p_i}'$.
Hence
$f$ is orthogonal to $S_{2k}(\Gamma_0(N/{p_i}))$ and $V(p_i)S_{2k}(\Gamma_0(N/{p_i}))$ for all $1 \le i \le r$.
Thus $f$ is orthogonal to the old space $S_{2k}^{\mathrm{old}}(\Gamma_0(N))$, that is $f \in S_{2k}^{\mathrm{new}}(\Gamma_0(N))$.
\end{proof}

\subsection{General case}
Let $N$ be a positive integer and $p$ be a prime such that $p^n$ strictly divides $N$, that
is $N=p^nM$ for some positive integer $M$ where $M$ is coprime to $p$. Let $n \ge 2$. Recall that for
$\mathcal{U}_0,\ \mathcal{U}_n,\ \mathcal{T}_1$ and $\mathcal{Y}_{r} \in H(G//K_0(p^n))$ where $1 \le r \le n$, 
we respectively obtained the classical operators
$Q_{p^n},\ W_{p^n},\ \tilde{U}_p$ and $S_{p^n,r}$ where $\tilde{U}_p$ is as before and
\[W_{p^n}(f) = f|_{2k}\mat{p^n\beta}{1}{N\gamma}{p^n} \quad \text{where $p^{2n}\beta-N\gamma=p^n$},\]
\[Q_{p^n}(f) = (\tilde{U}_p)^n W_{p^n}(f),\]
and
\[S_{p^n,r}(f) = f +  \sum_{j=r}^{n-1}\sum_{s \in \Z_p^*/1+p^{n-j}\Z_p} f|_{2k}A_{s,j}, \
\text{where $A_{s,j}=\mat{a_{s,j}}{b_{s,j}}{p^{j}M}{p^{n-j}-sM}$}\]
is a matrix of determinant $1$. Further we have
\[Q_{p^n}(Q_{p^n}-p^n)(Q_{p^n}+p^{n-1})=0, \qquad S_{p^n,r}(S_{p^n,r}-p^{n-r})=0.\]

We have the following lemma.
\begin{lem}\label{lem:lem2p^nM}
For $1 \le r \le n$, a set of right coset representatives for
$\Gamma_0(N)$ in $\Gamma_0(p^{r}M)$ consists of
the identity element and elements of the form
\[A_{s,j}=\mat{a_{s,j}}{b_{s,j}}{p^{j}M}{p^{n-j}-sM} \ \ \text{where $r\le j\le n-1$ and $s \in\Z_p^*/1+p^{n-j}\Z_p$ }.\]
\end{lem}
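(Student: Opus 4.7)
The plan is to combine a counting argument with a direct verification of distinctness. First I would compute the index: by the standard formula $[\SL_2(\Z):\Gamma_0(m)] = m\prod_{q\mid m}(1+1/q)$, the Euler factors at primes dividing $M$ and at $p$ cancel in the ratio $[\SL_2(\Z):\Gamma_0(p^nM)]/[\SL_2(\Z):\Gamma_0(p^rM)]$, leaving $[\Gamma_0(p^rM):\Gamma_0(p^nM)]=p^{n-r}$. Next I would count the proposed representatives: since $|\Z_p^*/(1+p^{n-j}\Z_p)|=\varphi(p^{n-j})=(p-1)p^{n-j-1}$, the total number is
\[1+\sum_{j=r}^{n-1}(p-1)p^{n-j-1}=1+(p^{n-r}-1)=p^{n-r},\]
which matches the index. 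Membership in $\Gamma_0(p^rM)$ is immediate: $A_{s,j}$ has determinant $1$ by the choice of $a_{s,j},b_{s,j}$, and its $(2,1)$-entry $p^jM$ is divisible by $p^rM$ because $j\ge r$.

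The main work is pairwise distinctness: given $(s,j)\ne(s',j')$, I must show $A_{s,j}A_{s',j'}^{-1}\notin\Gamma_0(p^nM)$, which means showing the $(2,1)$-entry of this product is not divisible by $p^nM$. A direct computation using the explicit formula for the inverse of an $\SL_2$ matrix yields
\[\bigl(A_{s,j}A_{s',j'}^{-1}\bigr)_{2,1}=M\bigl[p^{n+j-j'}-p^{n+j'-j}+sp^{j'}M-s'p^jM\bigr].\]
In the case $j=j'$ this collapses to $p^jM^2(s-s')$, divisible by $p^nM$ iff $p^{n-j}\mid s-s'$, iff $s$ and $s'$ represent the same class in $\Z_p^*/(1+p^{n-j}\Z_p)$. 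In the case $j<j'$, factoring out $p^jM$ leaves the bracket $p^{n-j'}-p^{n+j'-2j}+sp^{j'-j}M-s'M$, whose reduction mod $p$ is $-s'M\not\equiv0\pmod p$ (since $s'\in\Z_p^*$ and $\gcd(M,p)=1$); thus the $p$-adic valuation of the bottom-left is exactly $j<n$, not $\ge n$. Comparison with the identity is immediate since $A_{s,j}$ itself has bottom-left $p^jM$ with $j<n$, so $A_{s,j}\notin\Gamma_0(p^nM)$.

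Once these $p^{n-r}$ elements are shown to lie in $p^{n-r}$ distinct cosets, the count forces them to be a complete set of right coset representatives. The main obstacle is the case $j\ne j'$ in the distinctness step, but the valuation argument above handles it cleanly. An alternative route would be to translate to the $p$-adic decomposition: by strong approximation the conditions at primes different from $p$ are identical in $\Gamma_0(p^rM)$ and $\Gamma_0(p^nM)$, so the quotient is controlled by $K_0(p^n)\backslash K_0(p^r)$, which by Lemma~\ref{L:reps2} together with the coset decompositions in Lemmas~\ref{lem:lem1p^r} and~\ref{lem:lem2p^r} is a disjoint union of $K_0(p^n)$ and $K_0(p^n)y(p^j)d(s)$ for $r\le j\le n-1$; one would then check that the matrices $A_{s,j}$, viewed in $K_0(p^r)$ at the prime $p$, represent these cosets. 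I prefer the direct route since it keeps the bookkeeping entirely classical.
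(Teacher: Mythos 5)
Your proposal is correct and follows essentially the same route as the paper's proof: verify membership in $\Gamma_0(p^rM)$, show pairwise disjointness of the cosets by computing the $(2,1)$-entry of $A_{s,j}A_{s',j'}^{-1}$ and checking its $p$-adic valuation (splitting into the cases $j=j'$ and $j\ne j'$), and then conclude completeness by comparing the count $p^{n-r}$ with the known index $[\Gamma_0(p^rM):\Gamma_0(N)]=p^{n-r}$. The only cosmetic difference is that the paper cites Miyake for the index rather than rederiving it from the index formula.
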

\begin{proof}
First we check that the right cosets $\Gamma_0(N)$ and $\Gamma_0(N)A_{s,j}$ where $j$, $s$ varies as above are mutually disjoint.
For any such $j$ and $s$ clearly $A_{s,j} \in \Gamma_0(p^{r}M) \setminus \Gamma_0(N)$, hence
$\Gamma_0(N)A_{s,j}$ and $\Gamma_0(N)$ are disjoint.

Now for any $r \le i,\ j \le n-1$ we have
\[\Gamma_0(N)A_{s,j} = \Gamma_0(N)A_{t,i} \iff p^jM(p^{n-i}-tM)-p^iM(p^{n-j}-sM) \in p^nM\Z_p.\]
Now if $i \ne j$, say $i >j$ then the equality of the above two cosets implies that $-tM \in p\Z_p$ leading to a contradiction.

Similarly, for  $r \le j  \le n-1$ we have
\[\Gamma_0(N)A_{s,j} = \Gamma_0(N)A_{t,j} \iff p^jM (p^{n-j} - tM) - p^jM (p^{n-j} - sM) \in p^nM\Z_p\]
\[\iff t \equiv s \pmod{ p^{n-j} \Z_p} \iff t = s \in \Z_p^*/(1+p^{n-j}\Z_p).\]
Hence all the right cosets listed are mutually disjoint.

It is well known that $[\Gamma_0(p^{r}M) : \Gamma_0(N)] =p^{n-r}$ (~\cite[Theorem 4.2.5]{Miyake}). Since we have already checked
that the right cosets $\Gamma_0(N)$, $\Gamma_0(N)A_{s,j}$ where $j,\ s$ varies as above are mutually disjoint and since there are
exactly $p^{n-r}$ of them the lemma follows.
\end{proof}
\begin{lem}\label{lem:lem3p^nM}
For $1 \le r \le n$, the operator $S_{p^n,r}$ takes the space $S_{2k}(\Gamma_0(N))$ to $S_{2k}(\Gamma_0(p^{r}M))$.
\end{lem}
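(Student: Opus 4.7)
The plan is to recognize $S_{p^n,r}$ as the standard trace (or ``averaging'') operator associated to the chain of groups $\Gamma_0(N) \subset \Gamma_0(p^r M)$. By Lemma~\ref{lem:lem2p^nM}, the identity together with the matrices $A_{s,j}$ (where $r \le j \le n-1$ and $s \in \Z_p^*/1+p^{n-j}\Z_p$) forms a complete set of right coset representatives for $\Gamma_0(N) \backslash \Gamma_0(p^r M)$. Thus, after unpacking the definition, we have
\[
S_{p^n,r}(f) \;=\; \sum_{\gamma \in \Gamma_0(N)\backslash \Gamma_0(p^r M)} f\big|_{2k}\gamma.
\]

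Next I would check the invariance: for any $\delta \in \Gamma_0(p^r M)$, right multiplication by $\delta$ permutes the cosets $\Gamma_0(N)\gamma$, so
\[
S_{p^n,r}(f)\big|_{2k}\delta \;=\; \sum_{\gamma} f\big|_{2k}(\gamma\delta) \;=\; \sum_{\gamma} f\big|_{2k}\gamma \;=\; S_{p^n,r}(f),
\]
where I have used the cocycle property of the slash operator and the fact that $f|_{2k}\gamma_1 = f|_{2k}\gamma_2$ whenever $\Gamma_0(N)\gamma_1 = \Gamma_0(N)\gamma_2$ (since $f \in S_{2k}(\Gamma_0(N))$ is $\Gamma_0(N)$-invariant).

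Finally, since each summand $f|_{2k}\gamma$ is holomorphic on $\Hup$ and each $\gamma \in \GL_2(\Q)^{+}$ takes cusps to cusps, with $f$ vanishing at every cusp, the finite sum $S_{p^n,r}(f)$ vanishes at the cusps of $\Gamma_0(p^r M)$ as well; hence it lies in $S_{2k}(\Gamma_0(p^r M))$. There is no real obstacle here beyond checking these routine points; the key input is the coset decomposition in Lemma~\ref{lem:lem2p^nM}, without which we would not be able to identify $S_{p^n,r}$ as a trace map in the first place.
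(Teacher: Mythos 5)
Your proof is correct and follows essentially the same route as the paper: both identify $S_{p^n,r}$ with the trace operator for $\Gamma_0(N)\backslash\Gamma_0(p^rM)$ via the coset decomposition of Lemma~\ref{lem:lem2p^nM}. The only difference is that the paper simply cites Lemma~3 of Atkin--Lehner for the fact that this trace map lands in $S_{2k}(\Gamma_0(p^rM))$, whereas you verify the invariance, holomorphy and cuspidality directly.
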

\begin{proof}
Let $f \in S_{2k}(\Gamma_0(N))$. By the above lemma, the identity element and $A_{s,j}$ for
$r\le j\le n-1$ and $s \in\Z_p^*/1+p^{n-j}\Z_p$ constitute a set of right
coset representatives for $\Gamma_0(N)$ in $\Gamma_0(p^{r}M)$. It follows by \cite[Lemma 3]{A-L} that
\[S_{p^n,r}(f) =  f +  \sum_{j=r}^{n-1}\sum_{s \in \Z_p^*/1+p^{n-j}\Z_p} f|_{2k}A_{s,j} \in S_{2k}(\Gamma_0(p^{r}M)).\]
\end{proof}

\begin{cor}\label{cor:cor1p^nM}
For $1 \le r \le n$, the $p^{n-r}$ eigenspace of $S_{p^n,r}$ is precisely the subspace $S_{2k}(\Gamma_0(p^rM))$.
\end{cor}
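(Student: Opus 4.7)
The plan is to prove the two inclusions separately, exploiting the fact that $S_{p^n,r}$ satisfies a quadratic polynomial with distinct roots.

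For the inclusion $S_{2k}(\Gamma_0(p^rM)) \subseteq \ker(S_{p^n,r} - p^{n-r})$, I would simply quote Proposition~\ref{prop:trans2}(4): for any $f \in S_{2k}(\Gamma_0(p^rM))$ one has $q(\mathcal{Y}_r)(f) = p^{n-r}f$, which is precisely the statement $S_{p^n,r}(f) = p^{n-r}f$.

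For the reverse inclusion, I would use Lemma~\ref{lem:lem3p^nM}: the operator $S_{p^n,r}$ takes all of $S_{2k}(\Gamma_0(N))$ into the subspace $S_{2k}(\Gamma_0(p^rM))$. Hence if $f \in S_{2k}(\Gamma_0(N))$ satisfies $S_{p^n,r}(f) = p^{n-r}f$ with $r \le n$ (so $p^{n-r} \ne 0$), then
\[
f \;=\; \frac{1}{p^{n-r}}\, S_{p^n,r}(f) \;\in\; S_{2k}(\Gamma_0(p^rM)),
\]
giving the desired containment.

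There is no real obstacle here; the corollary follows immediately from two earlier results, namely Proposition~\ref{prop:trans2}(4) (which provides one inclusion via the adelic-to-classical translation) and Lemma~\ref{lem:lem3p^nM} (whose image statement forces any $p^{n-r}$-eigenvector to live inside $S_{2k}(\Gamma_0(p^rM))$). One could alternatively invoke the quadratic relation $S_{p^n,r}(S_{p^n,r} - p^{n-r}) = 0$ from Corollary~\ref{cor:trans4} to get a direct-sum decomposition into the $0$- and $p^{n-r}$-eigenspaces, but the argument above already pins down the $p^{n-r}$-eigenspace exactly, so no further work is needed.
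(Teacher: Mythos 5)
Your argument is correct and is essentially identical to the paper's own proof: one inclusion is Proposition~\ref{prop:trans2}(4), and the reverse inclusion follows by writing $f = p^{-(n-r)}S_{p^n,r}(f)$ and invoking Lemma~\ref{lem:lem3p^nM}. Nothing further is needed.
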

\begin{proof}
It follows from Proposition~\ref{prop:trans2} that $S_{2k}(\Gamma_0(p^r M))$ is contained in the 
$p^{n-r}$ eigenspace of $S_{p^n,r}$. 
Let $f \in S_{2k}(\Gamma_0(N))$ be such that $S_{p^n,r}(f) = p^{n-r}(f)$. By Lemma~\ref{lem:lem3p^nM}, $S_{p^n,r}(f)$ belongs to 
$S_{2k}(\Gamma_0(p^{r}M))$. Thus $ f \in S_{2k}(\Gamma_0(p^{r}M))$.
\end{proof}

\begin{prop}\label{prop:prop3p^nM}
Let $1 \le r \le n$. Then for each
$r < \alpha \le n$, the space $S^\mathrm{new}_{2k}(\Gamma_0(p^\alpha M))$ is contained in the $0$ eigenspace of $S_{p^n,r}$.
\end{prop}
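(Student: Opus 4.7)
The plan is to reinterpret $S_{p^n,r}$ as a classical trace operator and then invoke Petersson orthogonality between newforms and oldforms.

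First, by Lemma~\ref{lem:lem2p^nM}, the identity matrix together with the matrices $A_{s,j}$ (for $r \le j \le n-1$ and $s \in (\Z/p^{n-j}\Z)^*$) appearing in the definition of $S_{p^n,r}$ form a complete set of right coset representatives for $\Gamma_0(N)$ in $\Gamma_0(p^rM)$. Hence for any $f \in S_{2k}(\Gamma_0(N))$,
\[
S_{p^n,r}(f) \;=\; \sum_{\gamma \in \Gamma_0(N)\backslash \Gamma_0(p^rM)} f|_{2k}\gamma \;=\; \mathrm{Tr}^{\Gamma_0(p^rM)}_{\Gamma_0(N)}(f).
\]

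Next, given $f \in S^{\mathrm{new}}_{2k}(\Gamma_0(p^\alpha M))$ with $r < \alpha \le n$, I would use the chain $\Gamma_0(N) \subset \Gamma_0(p^\alpha M) \subset \Gamma_0(p^r M)$ to factor the trace. Since $f$ is $\Gamma_0(p^\alpha M)$-invariant, the inner trace $\mathrm{Tr}^{\Gamma_0(p^\alpha M)}_{\Gamma_0(N)}(f)$ is simply $[\Gamma_0(p^\alpha M):\Gamma_0(N)]\cdot f = p^{n-\alpha}f$, so
\[
S_{p^n,r}(f) \;=\; p^{n-\alpha}\, \mathrm{Tr}^{\Gamma_0(p^rM)}_{\Gamma_0(p^\alpha M)}(f),
\]
and it suffices to show the right-hand trace vanishes.

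For that final step, I would invoke the standard trace-inclusion adjointness
\[
\langle \mathrm{Tr}^{\Gamma_0(p^rM)}_{\Gamma_0(p^\alpha M)}(f),\, g \rangle_{\Gamma_0(p^rM)} \;=\; \langle f, g \rangle_{\Gamma_0(p^\alpha M)}
\]
valid for every $g \in S_{2k}(\Gamma_0(p^rM))$, which follows from a routine change-of-variables with fundamental domains together with the invariance of the weight-$2k$ measure under the slash action. Since $r < \alpha$, the inclusion image of $g$ inside $S_{2k}(\Gamma_0(p^\alpha M))$ is an oldform (take $l = 1$, $m_1 = p^r M$ in the definition of the old space), so newform–oldform orthogonality forces the right-hand side to vanish for every $g$. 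Setting $g = \mathrm{Tr}^{\Gamma_0(p^rM)}_{\Gamma_0(p^\alpha M)}(f)$ itself yields $\|\mathrm{Tr}(f)\|^2 = 0$, hence $\mathrm{Tr}(f) = 0$.

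The main obstacle is verifying the adjointness identity, which is standard but requires care with the slash action and the normalization of the Petersson measure. Alternatively, one could bypass it via local representation theory: the newform corresponds at $p$ to the new vector spanning the unique $K_0(p^n)$-fixed line inside the $K$-subrepresentation $\sigma(\alpha) \cong T_{\alpha-1} \subset \pi_p$, and Proposition~\ref{prop:represen2} shows that $\mathcal{Y}_r$ acts on $w_{\alpha-1}$ with eigenvalue $0$ precisely when $r \le \alpha-1$.
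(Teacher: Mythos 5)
Your proof is correct, but it takes a genuinely different route from the paper's. The paper also begins from the identification of $S_{p^n,r}$ as the trace from level $N$ to level $p^rM$ (its Lemmas~\ref{lem:lem2p^nM} and~\ref{lem:lem3p^nM}, which you correctly invoke), but then argues via Hecke eigenvalues: it shows $S_{p^n,r}$ commutes with $T_q$ for all $q\nmid N$ (using Remark~\ref{rem:1}), reduces to a primitive form $f$, observes that $S_{p^n,r}(f)$ is an oldform in $S_{2k}(\Gamma_0(p^\alpha M))$ with the same $T_q$-eigenvalues as $f$, and concludes it vanishes by \cite[Lemma 23]{A-L} --- essentially a multiplicity-one statement. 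You instead factor the trace through $\Gamma_0(p^\alpha M)$, pull out the index $p^{n-\alpha}$, and kill the remaining trace $\mathrm{Tr}^{\Gamma_0(p^rM)}_{\Gamma_0(p^\alpha M)}(f)$ by the adjointness of trace and inclusion under the Petersson pairing together with the \emph{definition} of the new space as the orthogonal complement of the old space. Your route buys independence from \cite[Lemma 23]{A-L} and from the reduction to primitive forms (orthogonality to oldforms is the defining property of every element of the new space, primitive or not), at the cost of verifying the adjointness identity --- which is routine, holds up to a positive constant under any of the standard normalizations, and is all you need since only the vanishing of the pairing matters. Your closing representation-theoretic remark (reading off the eigenvalue $0$ of $\mathcal{Y}_r$ on $w_{\alpha-1}$ from Proposition~\ref{prop:represen2}) is a third viable route, closest in spirit to the paper's adelic framework, though as stated it is only a sketch since it presumes the identification of the vector attached to $f$ inside $\pi_p$ with the new vector generating the $K$-type $T_{\alpha-1}$.
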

\begin{proof}
Let $q$ be any prime that is coprime to $N$, then the 
Hecke operator $T_q$ on $S_{2k}(\Gamma_0(N))$ corresponds to 
$\mathcal{T}_{(q)}$, the characteristic function of the double coset 
$\GL_2(\Z_q)\mat{q}{0}{0}{1}\GL_2(\Z_q)$ in
the $q$-adic Hecke algebra $H(\GL_2(\Z_q))$.
Since $\mathcal{Y}_{r} = {\mathcal{Y}_{r}}_{(p)}$ belongs to the 
$p$-adic Hecke algebra $H(K_0(p^n))$, it follows from Remark~\ref{rem:1} 
that the operators $\mathcal{T}_{(q)}$ and ${\mathcal{Y}_{r}}_{(p)}$ commute 
and hence the operators $S_{p^n,r}$ and $T_q$ on $S_{2k}(\Gamma_0(N))$ commute.

Let $r < \alpha \le n$ and $f \in S^\mathrm{new}_{2k}(\Gamma_0(p^\alpha M))$ be a primitive form. Thus
$f$ is an eigenform with respect to $T_q$ for any $q$ coprime to $N$. Now since $S_{p^n,r}$ and $T_q$ commute we get that
$S_{p^n,r}(f)$ is also an eigenfunction with respect to all such $T_q$ having the same eigenvalue as $f$.

By Corollary~\ref{lem:lem3p^nM}, $S_{p^n,r}(f) \in S_{2k}(\Gamma_0(p^{r}M))$ and as $r < \alpha$, it is an old form in the space
$S_{2k}(\Gamma_0(p^\alpha M))$. It now follows from \cite[Lemma 23]{A-L} that $S_{p^n,r}(f)=0$.

The proposition now follows since $S^\mathrm{new}_{2k}(\Gamma_0(p^\alpha M))$ has a basis of primitive forms.
\end{proof}

Next consider the operator $S_{p^n,r}'=W_{p^n}S_{p^n,r}W_{p^n}^{-1} = W_{p^n}S_{p^n,r}W_{p^n}$. 
Then $S_{p^n,r}'$ clearly satisfies the equation 
$S_{p^n,r}'(S_{p^n,r}'-p^{n-r})=0$.
Since the action of Atkin-Lehner operator $W_{p^n}$ on the space of new forms is surjective, in particular we get that the space
$S^\mathrm{new}_{2k}(\Gamma_0(N))$ is contained in the $0$ eigenspace of $S_{p^n,n-1}'$.
We have the following lemma.
\begin{lem}\label{lem:lem4p^nM}
For $0 \le r \le n$, the operator 
$W_{p^n}$ maps $S_{2k}(\Gamma_0(p^rM))$ onto $V(p^{n-r})S_{2k}(\Gamma_0(p^rM))$ 
and takes the new space $S_{2k}^\mathrm{new}(\Gamma_0(p^rM))$ onto 
$V(p^{n-r})S_{2k}^\mathrm{new}(\Gamma_0(p^rM))$. 

Further,
$W_{p^n}$ maps the space $V(p^r)S_{2k}(\Gamma_0(M))$ onto 
$V(p^{n-r})S_{2k}(\Gamma_0(M))$.

Consequently for $1 \le r \le n$, 
the $p^{n-r}$ eigenspace of $S_{p^n,r}'$ is precisely the space 
$V(p^{n-r})S_{2k}(\Gamma_0(p^rM))$.
\end{lem}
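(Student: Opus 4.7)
The strategy is to produce an explicit matrix factorization
\[
W_{p^n} = \sigma \cdot \widehat{W}_{p^r} \cdot \mat{p^{n-r}}{0}{0}{1},
\]
where $\sigma \in \Gamma_0(p^rM)$ and $\widehat{W}_{p^r} = \mat{p^r\beta_1}{1}{p^rM\gamma_1}{p^r}$ is an Atkin-Lehner matrix at level $p^rM$, chosen with integers $\beta_1,\gamma_1$ satisfying $p^r\beta_1 - M\gamma_1 = 1$. Once this factorization is in place, all three assertions follow quickly.

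To obtain the factorization, fix $W_{p^n} = \mat{p^n\beta}{1}{p^nM\gamma}{p^n}$ with $p^n\beta - M\gamma = 1$, pick any valid $\widehat{W}_{p^r}$, and define $\sigma := W_{p^n} \cdot \mat{p^{n-r}}{0}{0}{1}^{-1} \cdot \widehat{W}_{p^r}^{-1}$. A direct matrix computation, making repeated use of both defining relations $p^n\beta - M\gamma = 1$ and $p^r\beta_1 - M\gamma_1 = 1$, shows that $\sigma$ has integer entries, determinant $1$, and lower-left entry divisible by $p^rM$; hence $\sigma \in \Gamma_0(p^rM)$.

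With the factorization in hand, part (1) is immediate: for $f \in S_{2k}(\Gamma_0(p^rM))$ we have $f|_{2k}\sigma = f$, so
\[
W_{p^n}(f) = f|_{2k}\widehat{W}_{p^r}|_{2k}\mat{p^{n-r}}{0}{0}{1} = p^{(n-r)k} V(p^{n-r})\bigl(\widehat{W}_{p^r}(f)\bigr),
\]
where $\widehat{W}_{p^r}(f) = f|_{2k}\widehat{W}_{p^r}$ is the classical Atkin-Lehner involution at level $p^rM$ applied to $f$. Since this involution is a bijection of $S_{2k}(\Gamma_0(p^rM))$ that preserves the new subspace (by \cite{A-L}), the image $W_{p^n}\bigl(S_{2k}(\Gamma_0(p^rM))\bigr)$ equals $V(p^{n-r})S_{2k}(\Gamma_0(p^rM))$, and the same holds for the new subspaces. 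For part (2), apply part (1) to $g = V(p^r)(f_0) \in V(p^r)S_{2k}(\Gamma_0(M))$ for some $f_0 \in S_{2k}(\Gamma_0(M))$; it suffices to verify $\widehat{W}_{p^r}(g) \in S_{2k}(\Gamma_0(M))$. A routine slash-operator computation using $g(z) = f_0(p^rz)$ simplifies $\widehat{W}_{p^r}(g)$ to $p^{-rk} f_0|_{2k}\mat{p^r\beta_1}{1}{M\gamma_1}{1}$, and the matrix $\mat{p^r\beta_1}{1}{M\gamma_1}{1}$ has determinant $p^r\beta_1 - M\gamma_1 = 1$ with lower-left divisible by $M$, hence lies in $\Gamma_0(M)$ and fixes $f_0$; this yields $\widehat{W}_{p^r}(g) = p^{-rk}f_0 \in S_{2k}(\Gamma_0(M))$ and therefore $W_{p^n}(g) \in V(p^{n-r})S_{2k}(\Gamma_0(M))$. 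For part (3), $S_{p^n,r}' = W_{p^n}S_{p^n,r}W_{p^n}^{-1}$ shows that the $p^{n-r}$-eigenspace of $S_{p^n,r}'$ is $W_{p^n}$ applied to the $p^{n-r}$-eigenspace of $S_{p^n,r}$, which by Corollary~\ref{cor:cor1p^nM} is $S_{2k}(\Gamma_0(p^rM))$; part (1) then identifies the image as $V(p^{n-r})S_{2k}(\Gamma_0(p^rM))$.

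The main technical obstacle is verifying the integrality and congruence conditions on $\sigma$, which requires carrying the two $\gcd$-type relations through entry-by-entry multiplication; the edge cases $r=0$ (which recovers and generalizes Lemma~\ref{lem:lem4pm} to $p^n$-level: $\sigma$ becomes the identity modulo $\Gamma_0(M)$ and $\widehat{W}_{p^0}$ acts trivially, so $W_{p^n}(f) = p^{nk}V(p^n)(f)$) and $r=n$ (where $\mat{p^{n-r}}{0}{0}{1}$ is the identity, $\widehat{W}_{p^n} = W_{p^n}$, and the lemma degenerates to the involution property of $W_{p^n}$ on $S_{2k}(\Gamma_0(N))$) provide useful sanity checks.
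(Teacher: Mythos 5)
Your proof is correct and follows essentially the same route as the paper: both rest on the identity $W_{p^n}=W_{p^r}\cdot\mat{p^{n-r}}{0}{0}{1}$ up to $\Gamma_0(p^rM)$, followed by the involution/surjectivity property of the level-$p^rM$ Atkin--Lehner operator and the eigenspace identification from Corollary~\ref{cor:cor1p^nM}. The only cosmetic difference is that the paper chooses the representative $\mat{p^r\beta}{1}{p^rM\gamma}{p^n}$ for $W_{p^r}$, which makes the factorization exact and renders your correction matrix $\sigma$ unnecessary (I checked that your $\sigma$ does land in $\Gamma_0(p^rM)$, so your version works too).
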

\begin{proof}
Let $r\ge 1$ be as above. Let $f \in S_{2k}(\Gamma_0(p^rM))$. Then, 
\begin{equation*}
\begin{split}
W_{p^n}(f)(z) &= f\left(\frac{p^n\beta z+1}{N\gamma z +p^n}\right)(N\gamma z +p^n)^{-2k}p^{nk} \\
&= f\left(\frac{p^r\beta(p^{n-r}z)+1}{p^rM\gamma (p^{n-r}z) +p^n}\right)(N\gamma z +p^n)^{-2k}p^{nk} \\
&= p^{(n-r)k} f|_{2k} \mat{p^r\beta}{1}{p^rM\gamma}{p^n} (p^{n-r}z) = 
p^{(n-r)k} f|_{2k} W_{p^r} (p^{n-r}z)
\end{split}
\end{equation*}
which clearly belongs to $V(p^{n-r})S_{2k}(\Gamma_0(p^rM))$.

Note that since $W_{p^r}$ is an involution on $S_{2k}(\Gamma_0(p^rM))$, 
it is a surjection, i.e any $f\in S_{2k}(\Gamma_0(p^rM))$ is of the form 
$f'|_{2k} W_{p^r}$ for some $f'\in S_{2k}(\Gamma_0(p^rM))$. 
Let $g(z) = f(p^{n-r} z)$ where $f\in S_{2k}(\Gamma_0(p^rM))$. Then 
by above computation,
\[g(z) = f'|_{2k} W_{p^r}(p^{n-r} z) = p^{(r-n)k} W_{p^n}(f')(z).\]
Thus $W_{p^n}(g)(z) = p^{(r-n)k}(f')(z) = p^{(r-n)k}f|_{2k} W_{p^r}(z)$.

It is clear from above, that if $f \in S_{2k}(M)$ then 
$W_{p^n}(f)(z) = p^{nk}f(p^nz)$ and conversely if $g=f(p^nz)$ then 
$W_{p^n}(g) = p^{-nk}f$, proving the statement for $r=0$.
Moreover Atkin-Lehner involutions $W_{p^r}$ are surjection on new spaces and 
hence takes $S_{2k}^\mathrm{new}(\Gamma_0(p^rM))$ onto 
$V(p^{n-r})S_{2k}^\mathrm{new}(\Gamma_0(p^rM))$.

The proof of the second statement follows similarly. 
For the final statement let $1 \le r \le n$. 
Now $h$ is in the $p^{n-r}$ eigenspace of $S_{p^n,r}'$ 
if and only if $W_{p^n}(h)$ is in the $p^{n-r}$ 
eigenspace of $S_{p^n,r}$. 
By Corollary~\ref{cor:cor1p^nM}, this is same as 
$W_{p^n}(h) \in S_{2k}(\Gamma_0(p^rM))$, that is 
$h \in V(p^{n-r})S_{2k}(\Gamma_0(p^rM))$.
\end{proof}

Applying above results to the case $r =n-1$ we have the following corollary.
\begin{cor}\label{cor:corp^nM}
The space $S_{2k}(\Gamma_0(p^{n-1} M))$ is the $p$ eigenspace of $S_{p^n,n-1}$ and 
$V(p)S_{2k}(\Gamma_0(p^{n-1} M))$ is the $p$ eigenspace of $S_{p^n,n-1}'$. Moreover,
the space $S^\mathrm{new}_{2k}(\Gamma_0(N))$ is contained in the intersection of the
$0$ eigenspaces of $S_{p^n,n-1}$ and $S_{p^n,n-1}'$. 
\end{cor}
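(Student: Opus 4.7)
The plan is to verify each of the three assertions by specializing or combining results already established in the excerpt, so the corollary reduces to a short bookkeeping argument rather than a fresh computation.

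First I would read off the identification of the $p$-eigenspace of $S_{p^n,n-1}$. Corollary~\ref{cor:cor1p^nM} states that for every $1\le r\le n$ the $p^{n-r}$-eigenspace of $S_{p^n,r}$ is exactly $S_{2k}(\Gamma_0(p^rM))$; specializing to $r=n-1$ gives $p^{n-r}=p$ and hence the first claim. The analogous claim for $S_{p^n,n-1}'$ is the $r=n-1$ case of the last assertion in Lemma~\ref{lem:lem4p^nM}, which identifies the $p^{n-r}$-eigenspace of $S_{p^n,r}'$ as $V(p^{n-r})S_{2k}(\Gamma_0(p^rM))$, so this yields $V(p)S_{2k}(\Gamma_0(p^{n-1}M))$ as required.

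For the new-space containment in the $0$-eigenspace of $S_{p^n,n-1}$, I would invoke Proposition~\ref{prop:prop3p^nM} with $r=n-1$ and $\alpha=n$: since $r<\alpha$ is allowed, the space $S_{2k}^{\mathrm{new}}(\Gamma_0(p^nM))=S_{2k}^{\mathrm{new}}(\Gamma_0(N))$ lies in the $0$-eigenspace of $S_{p^n,n-1}$. For the corresponding statement with $S_{p^n,n-1}'$, I would use the conjugation relation $S_{p^n,n-1}'=W_{p^n}S_{p^n,n-1}W_{p^n}^{-1}$ together with the fact that $W_{p^n}$ preserves $S_{2k}^{\mathrm{new}}(\Gamma_0(N))$ (indeed it is a surjective involution on the new space, as recorded in Lemma~\ref{lem:lem4p^nM}). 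Concretely, given $f\in S_{2k}^{\mathrm{new}}(\Gamma_0(N))$, write $f=W_{p^n}(g)$ with $g\in S_{2k}^{\mathrm{new}}(\Gamma_0(N))$; then
\[
S_{p^n,n-1}'(f)=W_{p^n}S_{p^n,n-1}W_{p^n}^{-1}W_{p^n}(g)=W_{p^n}\bigl(S_{p^n,n-1}(g)\bigr)=W_{p^n}(0)=0,
\]
using the already-proved vanishing on the new space.

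Since every step is a direct quotation of a previously established result, there is no genuine obstacle; the only thing to be careful about is matching indices ($r=n-1$, $\alpha=n$) and invoking the correct surjectivity of $W_{p^n}$ on the new space, which is explicit in Lemma~\ref{lem:lem4p^nM}.
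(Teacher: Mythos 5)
Your proposal is correct and follows exactly the route the paper intends: the paper's entire proof is the sentence ``Applying above results to the case $r=n-1$,'' meaning Corollary~\ref{cor:cor1p^nM} and Lemma~\ref{lem:lem4p^nM} for the eigenspace identifications, Proposition~\ref{prop:prop3p^nM} with $\alpha=n$ for the new-space containment, and the surjectivity of $W_{p^n}$ on the new space for the primed operator. Your only addition is to write out explicitly the one-line conjugation computation that the paper leaves implicit.
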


Next we have the following proposition.
\begin{prop}\label{prop:prop4p^nM}
The operators $S_{p^n, n-1}$ and $S_{p^n, n-1}'$ are self-adjoint with respect to Petersson inner product.
\end{prop}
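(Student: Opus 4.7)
The plan is to identify $S_{p^n,n-1}$ as a trace-and-include operator and derive its self-adjointness from the adjunction between trace and inclusion of modular form spaces of consecutive levels.

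First, by Lemma~\ref{lem:lem2p^nM} the matrices $\{I, A_{s,n-1}\}_{s \in \Z_p^*/(1+p\Z_p)}$ form a complete set of right coset representatives for $\Gamma_0(N)$ inside $\Gamma_0(p^{n-1}M)$. Combined with Lemma~\ref{lem:lem3p^nM}, this shows $S_{p^n,n-1} = \iota \circ \mathrm{Tr}$, where $\mathrm{Tr}: S_{2k}(\Gamma_0(N)) \to S_{2k}(\Gamma_0(p^{n-1}M))$ sends $f \mapsto \sum_\gamma f|_{2k}\gamma$ over these representatives, and $\iota$ is the natural inclusion in the opposite direction.

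The crucial step is the adjunction identity
\[\langle \mathrm{Tr}(f),\, g \rangle_{p^{n-1}M} = \langle f,\, \iota(g) \rangle_N \qquad \text{for } f \in S_{2k}(\Gamma_0(N)),\ g \in S_{2k}(\Gamma_0(p^{n-1}M))\]
(up to a harmless normalization constant independent of $f, g$). To prove it I would fix a fundamental domain $D_0$ for $\Gamma_0(p^{n-1}M) \backslash \Hup$, write the fundamental domain for $\Gamma_0(N) \backslash \Hup$ as $D = \bigsqcup_i \gamma_i D_0$, and unfold the right-hand side as $\sum_i \int_{\gamma_i D_0} f\, \bar g\, y^{2k-2}\, dx\, dy$. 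Substituting $z = \gamma_i w$ and invoking the standard slash-action computation
\[f(\gamma_i w)\, \overline{g(\gamma_i w)}\, (\mathrm{Im}\,\gamma_i w)^{2k} = (f|_{2k}\gamma_i)(w)\, \overline{(g|_{2k}\gamma_i)(w)}\, v^{2k},\]
together with $g|_{2k}\gamma_i = g$ for $\gamma_i \in \Gamma_0(p^{n-1}M)$, collapses each summand to $\int_{D_0} (f|_{2k}\gamma_i)\, \bar g\, v^{2k-2}\, du\, dv$; summing gives $\langle \mathrm{Tr}(f), g\rangle_{p^{n-1}M}$.

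Conjugating the adjunction yields $\langle \iota(h), f \rangle_N = \langle h, \mathrm{Tr}(f) \rangle_{p^{n-1}M}$ for any $h \in S_{2k}(\Gamma_0(p^{n-1}M))$. Applying this with $h = \mathrm{Tr}(f)$ produces
\[\langle S_{p^n,n-1}(f),\, g \rangle_N = \langle \mathrm{Tr}(f),\, \mathrm{Tr}(g) \rangle_{p^{n-1}M},\]
an expression that is manifestly Hermitian-symmetric in $(f,g)$, establishing self-adjointness of $S_{p^n,n-1}$. For $S_{p^n,n-1}' = W_{p^n} S_{p^n,n-1} W_{p^n}^{-1}$, I would invoke that $W_{p^n}$ is both self-adjoint and an involution on $S_{2k}(\Gamma_0(N))$, as argued for $W_p$ in Proposition~\ref{prop:prop2pM}, so $(W_{p^n})^* = W_{p^n}^{-1} = W_{p^n}$ and therefore $(S_{p^n,n-1}')^* = (W_{p^n}^{-1})^* S_{p^n,n-1}^* W_{p^n}^* = S_{p^n,n-1}'$. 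The main obstacle will be the careful bookkeeping of the mixed-level Petersson pairings in the change-of-variable step; everything else is routine manipulation.
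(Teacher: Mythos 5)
Your proof is correct, but it takes a genuinely different route from the paper's. The paper works directly with the coset sum: writing $S_{p^n,n-1} = I + q(\mathcal{V}_{n-1})$ with $q(\mathcal{V}_{n-1})(f) = \sum_{s=1}^{p-1} f|_{2k}A_s$, it invokes Miyake's Theorem~2.8.2 to identify the adjoint as $g \mapsto \sum_{t=1}^{p-1} g|_{2k}A_t^{-1}$, and then verifies by an explicit congruence (the assignment $s\equiv a_tM^{-1}\pmod{p}$ is a bijection of $\{1,\dots,p-1\}$ with $A_sA_t\in\Gamma_0(N)$) that $\sum_t g|_{2k}A_t^{-1} = \sum_s g|_{2k}A_s$, i.e.\ that the operator coincides with its own adjoint. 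You instead recognize $S_{p^n,n-1}$ as $\iota\circ\mathrm{Tr}$ via Lemmas~\ref{lem:lem2p^nM} and~\ref{lem:lem3p^nM} and deduce self-adjointness from the trace--inclusion adjunction obtained by unfolding, reducing everything to the manifestly Hermitian-symmetric pairing $\langle \mathrm{Tr}(f),\,\mathrm{Tr}(g)\rangle$. Your argument is the more conceptual one: it sidesteps the explicit check that the inverses $A_t^{-1}$ rearrange into the same system of right coset representatives (the one nontrivial computation in the paper's proof, and not automatic, since membership of $A_s^{-1}$ and $A_{s'}^{-1}$ in a common right coset only says $A_s$ and $A_{s'}$ share a left coset), and it applies verbatim to every $S_{p^n,r}$ with $1\le r\le n$, not just $r=n-1$. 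The one point requiring care is the level-dependent normalization of the Petersson products, which you correctly flag as a positive constant that cannot affect Hermitian symmetry; your treatment of $S_{p^n,n-1}'$ via $W_{p^n}$ being a self-adjoint involution is the same as the paper's.
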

\begin{proof}
Since $S_{p^n, n-1} = I + q(\mathcal{V}_{n-1})$, it is enough to prove that $q(\mathcal{V}_{n-1})$ on $S_{2k}(\Gamma_0(N))$ is self-adjoint.
Recall that
\[q(\mathcal{V}_{n-1})(f) =  \sum_{s=1}^{p-1} f |_{2k}A_s \quad \text{where $A_s =\mat{a_s}{b_s}{p^{n-1}M}{p-sM} \in \SL_2(\Z)$}.\]
By \cite[Theorem 2.8.2]{Miyake},
$\langle q(\mathcal{V}_{n-1})(f), g \rangle = 
\langle f, \sum_{s=1}^{p-1} g |_{2k}A_s^{-1} \rangle$.
We claim that for any $f \in S_{2k}(\Gamma_0(N))$ we have
$\sum_{s=1}^{p-1} f |_{2k}A_s = \sum_{t=1}^{p-1} f |_{2k}A_t^{-1}$. Note that for each $1 \le t \le p-1$, the choice of $a_t$ is unique mod $p$.
Let $1 \le s \le p-1$ be such that $s \equiv a_t M^{-1} \pmod{p}$. As $t$ varies from $1$ to $p-1$, so does $s$. Now it is
easy to see that
\[ s \equiv a_t M^{-1} \pmod{p} \iff A_sA_t \in \Gamma_0(N) \iff f |_{2k}A_s = f |_{2k}A_t^{-1}, \]
proving our claim. Thus
\[\langle q(\mathcal{V}_{n-1})(f), g \rangle = \langle f, q(\mathcal{V}_{n-1})g \rangle,\]
and so $S_{p^n,n-1}$ is self-adjoint. Since the Atkin-Lehner operator $W_{p^n}$ is self-adjoint, it follows that $S_{p^n,n-1}'$ is also self-adjoint.
\end{proof}

Now we restate and give a proof of Theorem~\ref{thm:sec2thm4}  
of which Theorem 2' is a particular case. 
\begin{thm}\label{thm:p^nM}
Let $N=p_1p_2\cdots p_rq_1^{\alpha_1}q_2^{\alpha_2}\cdots q_s^{\alpha_s}$ with $p_i$ and $q_j$ distinct primes 
and $\alpha_j \ge 2$ for all $1 \le j \le s$. Then 
the space of new forms $S_{2k}^{\mathrm{new}}(\Gamma_0(N))$ is the intersection of
the $-1$ eigenspaces of $Q_{p_i}$ and $Q_{p_i}'$ as $1 \le i \le r$ and 
$0$ eigenspaces of $S_{{q_j}^{\alpha_j}, \alpha_j-1}$ and 
$S'_{{q_j}^{\alpha_j}, \alpha_j-1}$
for all $1 \le j \le s$. That is, $f \in S_{2k}^{\mathrm{new}}(\Gamma_0(N))$
if and only if $Q_{p_i}(f)=-f =Q_{p_i}'(f)$ for all $1 \le i \le r$ and 
$S_{{q_j}^{\alpha_j}, \alpha_j-1}(f) = 0 =S_{{q_j}^{\alpha_j}, \alpha_j-1}'(f)$ 
for all $1 \le j \le s$.
\end{thm}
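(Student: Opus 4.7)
The proof will follow the template of Theorem~\ref{thm:pM}, splitting into a forward direction (new forms satisfy the eigenvalue conditions) and a reverse direction (eigenvalue conditions imply newness by orthogonality), and handling the two types of primes separately.

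For the forward direction, suppose $f \in S_{2k}^{\mathrm{new}}(\Gamma_0(N))$. For each prime $p_i$ with $p_i \| N$, the argument of Lemma~\ref{lem:lem1pM} applies verbatim: on primitive forms $W_{p_i}$ acts by a sign $\lambda(p_i) = \pm 1$ and $U_{p_i}$ by $-\lambda(p_i) p_i^{k-1}$, so $Q_{p_i}(f) = p_i^{1-k} U_{p_i} W_{p_i}(f) = -f$, and the same for $Q_{p_i}'$. For each prime $q_j$ with $q_j^{\alpha_j}\|N$ and $\alpha_j \ge 2$, write $N = q_j^{\alpha_j} \widetilde{M}$ with $q_j \nmid \widetilde{M}$, and apply Proposition~\ref{prop:prop3p^nM} with $n = \alpha = \alpha_j$ and $r = \alpha_j - 1$ (note $r < \alpha$): this gives $S_{q_j^{\alpha_j},\alpha_j-1}(f) = 0$. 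Since $W_{q_j^{\alpha_j}}$ acts as a surjection on the new space and $S'_{q_j^{\alpha_j},\alpha_j-1}= W_{q_j^{\alpha_j}} S_{q_j^{\alpha_j},\alpha_j-1}W_{q_j^{\alpha_j}}^{-1}$, we obtain $S'_{q_j^{\alpha_j},\alpha_j-1}(f) = 0$ as well (this is the content of Corollary~\ref{cor:corp^nM}).

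For the reverse direction, suppose $f \in S_{2k}(\Gamma_0(N))$ satisfies all the listed eigenvalue conditions. By \eqref{eq:1} the old space is the sum, over all primes $p \mid N$, of $S_{2k}(\Gamma_0(N/p))$ and $V(p) S_{2k}(\Gamma_0(N/p))$, so it suffices to show $f$ is orthogonal to each of these summands. For $p = p_i$: Lemma~\ref{lem:lem2pM} (with $M = N/p_i$) shows $S_{2k}(\Gamma_0(N/p_i))$ is contained in the $p_i$-eigenspace of $Q_{p_i}$, and Lemma~\ref{lem:lem4pm} shows $V(p_i) S_{2k}(\Gamma_0(N/p_i))$ is contained in the $p_i$-eigenspace of $Q_{p_i}'$. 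Since $Q_{p_i}$ and $Q_{p_i}'$ are self-adjoint by Proposition~\ref{prop:prop2pM}, the $-1$-eigenspace (containing $f$) is orthogonal to the $p_i$-eigenspace. For $p = q_j$: writing $N = q_j^{\alpha_j}\widetilde{M}$, Corollary~\ref{cor:cor1p^nM} with $r = \alpha_j - 1$ identifies the $q_j$-eigenspace of $S_{q_j^{\alpha_j},\alpha_j-1}$ as exactly $S_{2k}(\Gamma_0(N/q_j))$, and Lemma~\ref{lem:lem4p^nM} identifies the $q_j$-eigenspace of $S'_{q_j^{\alpha_j},\alpha_j-1}$ as exactly $V(q_j) S_{2k}(\Gamma_0(N/q_j))$. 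Self-adjointness from Proposition~\ref{prop:prop4p^nM} then forces $f$ (in the $0$-eigenspaces) to be orthogonal to both.

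Combining these for all primes $p \mid N$, $f$ is orthogonal to every generator of $S_{2k}^{\mathrm{old}}(\Gamma_0(N))$, hence $f \in S_{2k}^{\mathrm{new}}(\Gamma_0(N))$.

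I do not expect a single hard step; everything needed has been assembled in the preceding sections. The only place where one needs to be careful is matching parameters in the application of Proposition~\ref{prop:prop3p^nM} and Corollary~\ref{cor:cor1p^nM} (namely that $r = \alpha_j - 1$ and $n = \alpha_j$ satisfy the hypotheses $1 \le r \le n$ and $r < \alpha$), and in invoking the self-adjointness results which were proved for a single prime $p \| N$ or $p^n \| N$ but apply equally to arbitrary $N$ once that local divisibility condition is verified. Theorem~$2'$ is the special case where every $\alpha_j = 2$, so no separate argument is needed.
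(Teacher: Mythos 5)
Your proposal is correct and follows essentially the same route as the paper: the forward direction via Lemma~\ref{lem:lem1pM} and Corollary~\ref{cor:corp^nM} (equivalently Proposition~\ref{prop:prop3p^nM} with $n=\alpha=\alpha_j$, $r=\alpha_j-1$), and the converse by locating $S_{2k}(\Gamma_0(N/p))$ and $V(p)S_{2k}(\Gamma_0(N/p))$ inside the $p$-eigenspaces of the relevant operators and using self-adjointness to conclude orthogonality to the old space. The parameter checks and the remark about transporting the single-prime lemmas to general $N$ are exactly the care the paper implicitly takes.
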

\begin{proof}
We have already seen one side implication. 
Conversely suppose $f \in S_{2k}(\Gamma_0(N))$ is such that $Q_{p_i}(f)=-f =Q_{p_i}'(f)$ for 
all $1 \le i \le r$ and $S_{{q_j}^{\alpha_j}, \alpha_j-1}(f) = 0 =S_{{q_j}^{\alpha_j}, \alpha_j-1}'(f)$ 
for all $1 \le j \le s$. It follows from the previous subsection that 
for each $1 \le i \le r$, $S_{2k}(\Gamma_0(N/{p_i}))$ is contained in the $p_i$ eigenspace of $Q_{p_i}$ and
$V(p_i)S_{2k}(\Gamma_0(N/{p_i}))$ is contained in the $p_i$ eigenspace of $Q_{p_i}'$. Also 
from Corollary~\ref{cor:corp^nM}, for each $1 \le j \le s$, we get that 
$S_{2k}(\Gamma_0(N/{q_j}))$ is contained in the $q_j$ eigenspace of $S_{{q_j}^{\alpha_j}, \alpha_j-1}$ and 
$V(q_j)S_{2k}(\Gamma_0(N/{q_j}))$ is contained in the $q_j$ eigenspace of $S_{{q_j}^{\alpha_j}, \alpha_j-1}'$.

Since $Q_{p_i}$, $Q_{p_i}'$ and $S_{{q_j}^{\alpha_j}, \alpha_j-1}$, 
$S_{{q_j}^{\alpha_j}, \alpha_j-1}'$ are self-adjoint operators we get that 
$f$ is orthogonal to $S_{2k}(\Gamma_0(N/{p}))$ and $V(p)S_{2k}(\Gamma_0(N/{p}))$ for each prime divisor $p$ of $N$. 
Thus $f$ is orthogonal to the old space, that is, $f \in S_{2k}^{\mathrm{new}}(\Gamma_0(N))$.
\end{proof}

Next we consider $N$ such that any prime divisor divides it with power at most $2$. 
Let $p$ be a prime such that $N=p^2M$, so $(p,M)=1$.
Recall that
$Q_{p^2} = (\tilde{U}_p)^2 W_{p^2}$ and $Q_{p^2}(Q_{p^2}-p^2)(Q_{p^2}+p)=0$.
It follows from Corollary~\ref{cor:trans3} that if $f \in S_{2k}(\Gamma_0(pM))$ 
then $Q_{p^2}(f)=pQ_p(f)$, hence $Q_{p^2}$ stabilizes $S_{2k}(\Gamma_0(pM))$ 
and acts with eigenvalues $p^2$ and $-p$ on this subspace.
In particular if $f \in S_{2k}(\Gamma_0(M))$ then $Q_{p^2}(f)=p^2f$ and
if $f\in S_{2k}^\mathrm{new}(\Gamma_0(pM))$ then $Q_{p^2}(f)=-pf$.

Finally if $f \in S_{2k}^\mathrm{new}(\Gamma_0(N))$ is a primitive form then 
$\tilde{U}_p(f)=0$ and so $Q_{p^2}(f)=0$.
Thus if $f \in S_{2k}^\mathrm{new}(\Gamma_0(N))$ then $Q_{p^2}(f)=0$.

Consider the operator $Q_{p^2}'=W_{p^2}Q_{p^2}W_{p^2}=W_{p^2}(\tilde{U}_p)^2$, 
then $Q_{p^2}'(Q_{p^2}'-p^2)(Q_{p^2}'+p)=0$. We have the following lemma.
\begin{lem}\label{lem:lem5p^2M}
Let $N=p^2M$ with $(p,M)=1$.
\begin{enumerate} 
\item The operator $Q_{p^2}'$ stabilizes the space
$V(p) S_{2k}(\Gamma_0(pM))$ and its subspace $V(p)X_p$.
\item If $g(z)=f(p^2z) \in V(p^2) S_{2k}(\Gamma_0(M))$ where 
$f \in  S_{2k}(\Gamma_0(M))$, then $Q_{p^2}'(g)=p^2 g$. Consequently, 
$Q_{p^2}'$ has eigenvalues $p^2$ and $-p$ on the space $V(p)X_p$. 
\item If $f \in S_{2k}^\mathrm{new}(\Gamma_0(pM))$ and 
$g=f(pz) \in V(p)S_{2k}^\mathrm{new}(\Gamma_0(pM))$. 
Then $Q_{p^2}'(g)=-p g$.
\item Let $q$, $M'$ be positive integers such that $(q,p)=1$ and $qM' \mid M$. 
Then $V(pq)S_{2k}^\mathrm{new}(\Gamma_0(pM'))$ is contained in 
the $-p$ eigenspace of $Q_{p^2}'$.  
\end{enumerate}
Thus $Q_{p^2}'$ acts with eigenvalues $p^2$ and $-p$ on $V(p) S_{2k}(\Gamma_0(pM))$.
\end{lem}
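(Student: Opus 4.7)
My strategy is to reduce every claim to the corresponding statement for $Q_{p^2}$ via the conjugation relation $Q_{p^2}' = W_{p^2}\,Q_{p^2}\,W_{p^2}^{-1}$; since $W_{p^2}$ is an involution on $S_{2k}(\Gamma_0(N))$, an element $h$ is a $\lambda$-eigenvector of $Q_{p^2}'$ exactly when $W_{p^2}(h)$ is a $\lambda$-eigenvector of $Q_{p^2}$. The key inputs are Lemma~\ref{lem:lem4p^nM} (with $n=2$), which identifies the images $W_{p^2}(V(p)S_{2k}(\Gamma_0(pM))) = S_{2k}(\Gamma_0(pM))$, $W_{p^2}(V(p^2)S_{2k}(\Gamma_0(M))) = S_{2k}(\Gamma_0(M))$ and $W_{p^2}(V(p)S_{2k}(\Gamma_0(M))) = V(p)S_{2k}(\Gamma_0(M))$, together with the explicit formula $W_{p^2}(h)(z) = p^k W_p^{(pM)}(h)(pz)$ for $h \in S_{2k}(\Gamma_0(pM))$; Corollary~\ref{cor:trans3}, which says $Q_{p^2}(f) = p\,Q_p(f)$ on $S_{2k}(\Gamma_0(pM))$; and the description of the $Q_p$-eigenspaces inside $X_p$ from Section~\ref{section:pM}.

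For (1), $W_{p^2}$ is a bijection between $V(p)S_{2k}(\Gamma_0(pM))$ and $S_{2k}(\Gamma_0(pM))$, which is stabilized by $Q_{p^2}$, so $Q_{p^2}'$ stabilizes $V(p)S_{2k}(\Gamma_0(pM))$; similarly, $W_{p^2}$ sends $V(p)X_p = V(p)S_{2k}(\Gamma_0(M)) \oplus V(p^2)S_{2k}(\Gamma_0(M))$ to $X_p$, which $Q_p$ preserves by Corollary~\ref{cor:eigspcpM}. For (2), Lemma~\ref{lem:lem4p^nM} gives $W_{p^2}(g) = p^{-2k} f \in S_{2k}(\Gamma_0(M))$, and combining Lemma~\ref{lem:lem2pM} with $Q_{p^2} = p\,Q_p$ yields $Q_{p^2}(W_{p^2}(g)) = p^{2-2k} f$; pushing back through $W_{p^2}$ produces $p^2 g$. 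The ``consequently'' then follows because $W_{p^2}$ intertwines $Q_{p^2}'|_{V(p)X_p}$ with $p\,Q_p|_{X_p}$, on which Proposition~\ref{prop:prop1pM} together with Corollary~\ref{cor:eigspcpM} shows that $Q_p$ has eigenvalues precisely $p$ and $-1$. For (3), taking $g(z)=f(pz)$ with $f \in S_{2k}^{\mathrm{new}}(\Gamma_0(pM))$ and $W_p^{(pM)}(f) = \lambda f$ (so $\lambda = \pm 1$, and $Q_p^{(pM)}(f) = -f$ by Lemma~\ref{lem:lem1pM}), one computes $W_{p^2}(g) = p^{-k}\lambda f$, then $Q_{p^2}(W_{p^2}(g)) = -p^{1-k}\lambda f$, and finally $W_{p^2}(-p^{1-k}\lambda f) = -p^{1-k}\lambda \cdot p^k \lambda f(pz) = -p g$.

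The real work is in (4), where I must control the interaction of $W_p^{(pM)}$ with the shift $V(q)$. I will write $h = V(p)(f')$ with $f' = V(q)(f) \in S_{2k}(\Gamma_0(pqM')) \subset S_{2k}(\Gamma_0(pM))$; the argument of (3) applied with $f'$ in place of $f$ will yield $Q_{p^2}'(h) = -ph$ provided I can show $W_p^{(pM)}(f') = \lambda f'$ (after which $\tilde{U}_p \circ V(q) = V(q) \circ \tilde{U}_p$ and $\tilde{U}_p(f) = -\lambda f$ give $Q_p^{(pM)}(f') = -f'$). To prove the $W_p$-eigenvalue statement, the change of variables $q(az+b)/(cz+d) = \mat{a}{qb}{c/q}{d}(qz)$, legitimate because $q \mid pM\gamma$, yields
\[
W_p^{(pM)}(f(qz))(z) \;=\; f|_{2k}\mat{p\beta}{q}{pM_0\gamma}{p}(qz), \qquad M_0 = M/q.
\]
I will then check that the matrix on the right (of determinant $p$) differs from a standard Atkin--Lehner matrix $W_p^{(pM')} = \mat{p\beta'}{1}{pM'\gamma'}{p}$ by a left factor in $\Gamma_0(pM')$, using the divisibility $M' \mid M_0$; since $f|_{2k}W_p^{(pM')} = \lambda f$, this gives $W_p^{(pM)}(f') = \lambda f'$ as required.

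Finally, the concluding ``thus'' is immediate from (1): $W_{p^2}$ conjugates $Q_{p^2}'$ on $V(p)S_{2k}(\Gamma_0(pM))$ to $Q_{p^2} = p\,Q_p$ on $S_{2k}(\Gamma_0(pM))$, and the squarefree theory (Theorem~\ref{thm:pM} applied at level $pM$) shows $Q_p$ has only the eigenvalues $p$ and $-1$ there, so $Q_{p^2}'$ has only the eigenvalues $p^2$ and $-p$ on $V(p)S_{2k}(\Gamma_0(pM))$.
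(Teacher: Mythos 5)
Your proposal is correct, and for parts (1)--(3) and the ``consequently'' of (2) it follows the paper's own route: conjugate by the involution $W_{p^2}$, use Lemma~\ref{lem:lem4p^nM} (with $n=2$) to identify the images of $V(p)S_{2k}(\Gamma_0(pM))$, $V(p^2)S_{2k}(\Gamma_0(M))$ and $V(p)S_{2k}(\Gamma_0(M))$ under $W_{p^2}$, and use Corollary~\ref{cor:trans3} to replace $Q_{p^2}$ by $p\,Q_p$ on level-$pM$ forms, so that everything reduces to the square-free analysis of $Q_p$ on $S_{2k}(\Gamma_0(pM))$ and on $X_p$. The genuine divergence is in part (4). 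The paper proves that $V(q)$ commutes with $Q_{p^2}'$ (by checking that $\mat{q}{0}{0}{1}W_{p^2}\left(W_{p^2}\mat{q}{0}{0}{1}\right)^{-1}\in\Gamma_0(N/q)$ and invoking \cite[Lemma 15]{A-L} for $\tilde{U}_p$), writes $Q_{p^2}'V(pq)=V(q)Q_{p^2}'V(p)$, and then appeals to the extension of (3) to $V(p)S_{2k}^{\mathrm{new}}(\Gamma_0(pM'))$. You instead absorb the shift $V(q)$ into the Atkin--Lehner matrix directly, showing via the identity $W_p^{(pM)}(f(qz))(z)=f|_{2k}\mat{p\beta}{q}{pM_0\gamma}{p}(qz)$ with $M_0=M/q$ that $V(q)f$ is a $W_p^{(pM)}$-eigenform with the level-$pM'$ eigenvalue $\lambda$, and then rerun the computation of (3); this is the same underlying matrix manipulation, but it makes explicit the step the paper only asserts with ``we can check,'' which is a point in its favor. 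Your treatment of the concluding ``thus'' is also slightly cleaner than the paper's: you get the eigenvalues $\{p^2,-p\}$ on all of $V(p)S_{2k}(\Gamma_0(pM))$ at once from the quadratic relation $(Q_p-p)(Q_p+1)=0$ at level $pM$, whereas the paper decomposes $V(p)S_{2k}(\Gamma_0(pM))$ into new-subspace summands. One small thing to make explicit: in (3) and (4) you assume $W_p^{(pM')}(f)=\lambda f$, which holds only for primitive forms; you should state that you reduce to this case by linearity, since the new space has a basis of primitive forms and all the operators involved are linear.
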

\begin{proof}
Let $g = f(pz)$ where $f \in S_{2k}(\Gamma_0(pM))$. It follows from
Lemma~\ref{lem:lem4p^nM} that $W_{p^2}(g) = p^{-k}W_p(f)$ where $W_p$ acts
via $\mat{p^2\beta}{1}{pM\gamma}{p}$. Since $W_p$ is Atkin-Lehner operator on 
$S_{2k}(\Gamma_0(pM))$ and $Q_{p^2}$ stabilizes $S_{2k}(\Gamma_0(pM))$ and 
$W_p^2$ maps $S_{2k}(\Gamma_0(pM))$ onto $V(p)S_{2k}(\Gamma_0(pM))$
we get that $Q_{p^2}'(g)$ belongs to $V(p)S_{2k}(\Gamma_0(pM))$. 
Thus $Q_{p^2}'$ stabilizes $V(p) S_{2k}(\Gamma_0(pM))$.

In particular, if $f \in S_{2k}^\mathrm{new}(\Gamma_0(pM))$, since 
$W_p$ preserves the space of newforms, we get that
$W_{p^2}(g)$ belongs to $S_{2k}^\mathrm{new}(\Gamma_0(pM))$. Thus
\[Q_{p^2}'(g) = W_{p^2}Q_{p^2}(W_{p^2}(g)) = -pW_{p^2}(W_{p^2}(g)) = -pg, \]
proving $(3)$. 

Recall that $V(p)X_p= V(p)S_{2k}(\Gamma_0(M)) \oplus V(p^2)S_{2k}(\Gamma_0(M))$. 
Let $g(z)=f(p^2z)$ where $f \in  S_{2k}(\Gamma_0(M))$, then by
Lemma~\ref{lem:lem4p^nM}, we get that $W_{p^2}(g) = p^{-2k}f$ and thus 
\[Q_{p^2}'(g) = W_{p^2}Q_{p^2}(p^{-2k}f) = p^2 W_{p^2}(p^{-2k}f)=p^2g, \]
proving part of $(2)$. Now we shall complete proof of $(1)$ and $(2)$. 

Let $g(z)=f(pz)$ where $f \in  S_{2k}(\Gamma_0(M))$. By Lemma~\ref{lem:lem4p^nM}, 
$W_{p^2}(g)=g$ and using Lemma~\ref{lem:lem3pM} we get
\[Q_{p^2}'(g) = W_{p^2}Q_{p^2}(g) = 
pW_{p^2}(p^{1-2k}T_p(f)-g)=p^2T_p(f)(p^2z)-pg,\]
which clearly belongs to $V(p)X_p$, showing $(1)$.
Now following arguments as in Corollary~\ref{cor:eigspcpM} and Proposition~\ref{prop:prop1pM}, we get that 
$Q_{p^2}'$ acts with eigenvalues $p^2$ and $-p$ on $V(p)X_p$ and the $p^2$ eigenspace of $Q_{p^2}'$ inside $V(p)X_p$ is 
$V(p^2)S_{2k}(\Gamma_0(M))$.

To prove $(4)$, we check that the operators $V(q)$ and $Q_{p^2}'$ commutes on $S_{2k}(\Gamma_0(p^2M'))$. Since $(\tilde{U}_p)^2$ 
commutes with $V(q)$~\cite[Lemma 15]{A-L} enough to check that $W_{p^2}$ commutes with $V(q)$. Let $W_{p^2}$ acts via 
$\mat{p^2\beta}{1}{N\gamma}{p^2}$ of determinant $p^2$, then
$\mat{q}{0}{0}{1}W_{p^2}\left(W_{p^2}\mat{q}{0}{0}{1}\right)^{-1}$ 
belongs to $\Gamma_0(N/q)$. So for 
$f \in S_{2k}(\Gamma_0(N/q))$, $W_p^2 V(q) (f) = V(q)W_p^2 (f)$.
Hence 
$Q_{p^2}'V(pq)(f) = V(q)Q_{p^2}'V(p)(f)$ for 
$f \in S_{2k}^\mathrm{new}(\Gamma_0(pM'))$.

We can check that $V(p)S_{2k}^\mathrm{new}(\Gamma_0(pM'))$ is contained in 
the $-p$ eigenspace of $Q_{p^2}'$ and so, 
$Q_{p^2}'V(pq)S_{2k}^\mathrm{new}(\Gamma_0(pM')) = -pV(q)V(p)S_{2k}^\mathrm{new}(\Gamma_0(pM'))$ concluding the proof.

Finally since \[V(p)S_{2k}(\Gamma_0(pM)) = V(p)S_{2k}^\mathrm{new}(\Gamma_0(pM)) \oplus V(p)X_p \oplus\]
\[\oplus_{qM'\mid M, (q,p)=1} V(pq)S_{2k}^\mathrm{new}(\Gamma_0(pM')),\]
we get that $Q_{p^2}'$ acts with eigenvalues $p^2$ and $-p$ on $V(p) S_{2k}(\Gamma_0(pM))$.
\end{proof}
\begin{prop}\label{prop:prop5p^nM}
The operators $Q_{p^2}=(\tilde{U}_p)^2 W_{p^2}$ and $Q_{p^2}'=W_{p^2}Q_{p^2}W_{p^2}$ are self-adjoint with respect to Petersson inner product.
\end{prop}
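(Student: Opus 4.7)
The plan is to parallel the proof of Proposition~\ref{prop:prop2pM} almost verbatim, replacing $W_p$ by $W_{p^2}$ and accounting for the extra power of $\tilde{U}_p$. Recall that $\tilde{U}_p = p^{1-k}T_p$ on $S_{2k}(\Gamma_0(N))$, where $T_p = U_p$ is represented by the double coset $\Gamma_0(N)\mat{1}{0}{0}{p}\Gamma_0(N)$; by Theorem 2.8.2 of \cite{Miyake}, its Petersson adjoint $T_p^*$ is represented by $\Gamma_0(N)\mat{p}{0}{0}{1}\Gamma_0(N)$. Also recall that $W_{p^2}$ is self-adjoint and an involution on $S_{2k}(\Gamma_0(N))$, and that it normalizes $\Gamma_0(N)$.

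The central technical step is to establish the identity
\[ W_{p^2}^{-1} T_p W_{p^2} = T_p^* \]
as operators on $S_{2k}(\Gamma_0(N))$. Using the fact that $W_{p^2}\Gamma_0(N)W_{p^2}^{-1}=\Gamma_0(N)$, this is equivalent to proving the double coset identity
\[ \Gamma_0(N)W_{p^2}^{-1}\mat{1}{0}{0}{p}W_{p^2}\Gamma_0(N) = \Gamma_0(N)\mat{p}{0}{0}{1}\Gamma_0(N). \]
I would verify this exactly as in Proposition~\ref{prop:prop2pM}: explicitly compute the entries of $W_{p^2}^{-1}\mat{1}{0}{0}{p}W_{p^2}$ (which has determinant $p$), and then find an element $\mat{1}{0}{Nt}{1}\in\Gamma_0(N)$ — with $t$ chosen modulo $p$ so that $t\equiv\beta M^{-1}\pmod p$, exactly as in the earlier proof, but now using $p^2\beta - M\gamma = 1$ — so that $W_{p^2}^{-1}\mat{1}{0}{0}{p}W_{p^2}\cdot\mat{1}{0}{Nt}{1}\cdot\mat{p}{0}{0}{1}^{-1}\in\Gamma_0(N)$.

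Granted the boxed identity, the self-adjointness of $Q_{p^2}$ follows from the routine computation
\begin{align*}
\langle Q_{p^2}(f),g\rangle &= p^{2(1-k)}\langle T_p^2\,W_{p^2}(f),g\rangle = p^{2(1-k)}\langle W_{p^2}(f),(T_p^*)^2 g\rangle \\
&= p^{2(1-k)}\langle W_{p^2}(f),W_{p^2}T_p^2W_{p^2}^{-1}g\rangle = p^{2(1-k)}\langle f,T_p^2 W_{p^2}^{-1}g\rangle \\
&= p^{2(1-k)}\langle f,T_p^2 W_{p^2}(g)\rangle = \langle f,Q_{p^2}(g)\rangle,
\end{align*}
where we used self-adjointness and involutivity of $W_{p^2}$. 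For $Q_{p^2}'=W_{p^2}Q_{p^2}W_{p^2}^{-1}$ the self-adjointness is then automatic: since $W_{p^2}^{-1}=W_{p^2}$ is self-adjoint and $Q_{p^2}$ is self-adjoint, $(W_{p^2}Q_{p^2}W_{p^2})^* = W_{p^2}Q_{p^2}^*W_{p^2} = W_{p^2}Q_{p^2}W_{p^2} = Q_{p^2}'$.

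The main obstacle is the explicit matrix manipulation needed to verify the double coset identity for $W_{p^2}$; the formulas are messier than in the $W_p$ case because the bottom-left entry of $W_{p^2}$ is divisible by $p^2 M$ rather than $pM$, so the element of $\Gamma_0(N)$ used to clear the off-diagonal contribution must be chosen with a bit more care. Once this bookkeeping is done, everything else is formal manipulation in the inner product.
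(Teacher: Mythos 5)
Your proposal is correct and is exactly the argument the paper intends: its proof of Proposition~\ref{prop:prop5p^nM} consists of the single line ``The proof is similar to that of Proposition~\ref{prop:prop2pM},'' and you have carried out precisely that adaptation, establishing $W_{p^2}^{-1}T_pW_{p^2}=T_p^*$ via the analogous double coset identity and then squaring it to handle $(\tilde{U}_p)^2$. The formal inner-product manipulation and the deduction for $Q_{p^2}'$ from self-adjointness and involutivity of $W_{p^2}$ are both sound.
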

\begin{proof}
The proof is similar to that of Proposition~\ref{prop:prop2pM}.
\end{proof}
Now we restate and prove Theorem~\ref{thm:sec2thm2}.
\begin{thm}\label{thm:p^2M}
Let $N=M_1^2M$ where $M_1,\ M$ are square free and coprime. Then $f \in S_{2k}^{\mathrm{new}}(\Gamma_0(N))$
if and only if $Q_{p}(f)=-f =Q_{p}'(f)$ for all primes $p$ dividing $M$ and 
$Q_{p^2}(f)=0 =Q_{p^2}'(f)$ for all primes $p$ dividing $M_1$.
\end{thm}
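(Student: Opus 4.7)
The plan is to establish both directions of the theorem, following closely the strategies of Theorem~\ref{thm:pM} and Theorem~\ref{thm:p^nM}.

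For the forward direction, take $f \in S_{2k}^{\mathrm{new}}(\Gamma_0(N))$. When $p \mid M$ (so $p \| N$), the argument of Lemma~\ref{lem:lem1pM} gives $Q_p(f) = -f$ for a primitive $f$ (and hence for all newforms by linearity); since $W_p$ preserves the newspace and $Q_p' = W_p Q_p W_p^{-1}$, we get $Q_p'(f) = -f$ as well. When $p \mid M_1$ (so $p^2 \| N$), Atkin-Lehner implies that $\tilde U_p$ vanishes on all of $S_{2k}^\mathrm{new}(\Gamma_0(N))$; hence $Q_{p^2}'(f) = W_{p^2}(\tilde U_p)^2 f = 0$ directly, and $Q_{p^2}(f) = (\tilde U_p)^2 W_{p^2}(f) = 0$ because $W_{p^2}(f)$ is again in the newspace.

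For the backward direction, suppose $f \in S_{2k}(\Gamma_0(N))$ satisfies all the listed eigenvalue conditions. I would show that $f$ is orthogonal to $S_{2k}(\Gamma_0(N/p)) + V(p)S_{2k}(\Gamma_0(N/p))$ for every prime $p \mid N$; summing over $p \mid N$ this exhausts $S_{2k}^\mathrm{old}(\Gamma_0(N))$ (the potential extra contribution $V(p^2)S_{2k}(\Gamma_0(N/p^2))$ when $p^2 \| N$ is absorbed into $V(p)S_{2k}(\Gamma_0(N/p))$ via $V(p^2) = V(p) \circ V(p)$), forcing $f$ to be new. For $p \mid M$, Proposition~\ref{prop:prop1pM} and Lemma~\ref{lem:lem4pm} place these two subspaces respectively in the $p$-eigenspaces of $Q_p$ and $Q_p'$; combined with the self-adjointness of $Q_p, Q_p'$ (Proposition~\ref{prop:prop2pM}) and their spectrum $\{-1, p\}$, the $-1$-eigenspace containing $f$ is orthogonal to them. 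For $p \mid M_1$, Corollary~\ref{cor:trans3} gives $Q_{p^2}|_{S_{2k}(\Gamma_0(N/p))} = pQ_p$, whose eigenvalues $\{-p, p^2\}$ are nonzero, and Lemma~\ref{lem:lem5p^2M} shows $Q_{p^2}'$ has nonzero eigenvalues $\{-p, p^2\}$ on $V(p)S_{2k}(\Gamma_0(N/p))$; self-adjointness of $Q_{p^2}, Q_{p^2}'$ (Proposition~\ref{prop:prop5p^nM}) then yields the required orthogonality.

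The main subtlety is that Lemma~\ref{lem:lem5p^2M} and the identity $Q_{p^2} = pQ_p$ from Corollary~\ref{cor:trans3} are stated in the excerpt only for $N = p^2 M$ with $M$ coprime to $p$, whereas here the ``coprime-to-$p$ part'' is $L = (M_1/p)^2 M$, which need not be squarefree. One needs to verify that the proofs of the cited results use only the coprimality $(p, L) = 1$—which they do—so the arguments transfer without change. Apart from this checking, the proof is simply a direct combination of the techniques already set up in Section~\ref{section:pM} and the preceding subsection.
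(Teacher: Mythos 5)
Your proposal is correct and follows essentially the same route as the paper: the forward direction via $\tilde U_p$ killing the new space (so $Q_{p^2}(f)=0=Q_{p^2}'(f)$) together with Lemma~\ref{lem:lem1pM}, and the converse via self-adjointness plus the fact that $Q_{p^2}$ and $Q_{p^2}'$ act with the nonzero eigenvalues $p^2$ and $-p$ on $S_{2k}(\Gamma_0(N/p))$ and $V(p)S_{2k}(\Gamma_0(N/p))$ respectively. Your side remark is also sound, since Corollary~\ref{cor:trans3} and Lemma~\ref{lem:lem5p^2M} only assume $(p,M)=1$, not that the coprime part is square-free.
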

\begin{proof}
The one side implication is clear. 

Conversely if $f \in S_{2k}(\Gamma_0(N))$ is such that $Q_{p}(f)=-f =Q_{p}'(f)$ for all primes $p \mid M$, then as before 
$f$ is orthogonal to both $S_{2k}(\Gamma_0(N/{p}))$ and $V(p)S_{2k}(\Gamma_0(N/{p}))$ for all $p \mid M$. 

Let $q$ be a prime dividing $M_1$ and $N=q^2N'$, so $(q,N')=1$. We have already checked that $Q_{q^2}'$ stabilizes 
$S_{2k}(\Gamma_0(N/{q})) = S_{2k}(\Gamma_0(qN'))$ and acts with eigenvalues $q^2$ and $-q$. Further it follows from 
Lemma~\ref{lem:lem5p^2M} that $Q_{q^2}'$ stabilizes 
$V(q)S_{2k}(\Gamma_0(N/{q}))$ i.e, $V(q)S_{2k}(\Gamma_0(qN'))$ and acts with eigenvalues $q^2$ and $-q$.
Thus if $Q_{q^2}(f)=0 =Q_{q^2}'(f)$ for all primes $q$ dividing $M_1$ we get that 
$f$ is orthogonal to both $S_{2k}(\Gamma_0(N/{q}))$ and $V(q)S_{2k}(\Gamma_0(N/{q}))$ for all $q \mid M_1$. 
Hence $f$ is in the new space at level $N$. 
\end{proof}

Let $p$ be an odd prime. Next we shall consider the action of twisting operators $R_{p}$ and $R_{\chi}$ \cite[Section 6]{A-L}
where $R_p$ is the twist by the Dirichlet character given by Kronecker symbol $\kro{\cdot}{p}$ and
$R_{\chi}$ is the twist by the Dirichlet character given by $\kro{-1}{\cdot}$. To be more precise, let
$f(z) = \sum_{n=1}^{\infty}a_nq^n \in S_{2k}(\Gamma_0(N))$. Then
\[R_p (f)(z) = \sum_{n=1}^{\infty} \kro{n}{p}a_nq^n, \qquad R_{\chi} (f)(z) = \sum_{n=1}^{\infty} \kro{-1}{n}a_nq^n.\]
By \cite[Lemma 33]{A-L}, $R_p$ and $R_{\chi}$ are operators on $S_{2k}(\Gamma_0(N))$ provided that $p^2 \mid N$ and $16 \mid N$
respectively. 

It is well known that 
$R_p$ and  $R_{\chi}$ are self-adjoint operators with respect to 
Petersson inner product.
\begin{lem}\label{lem:lem8p^nM}
Let $N=p^nM$ where $p$ is odd and coprime to $M$ and $n \ge 2$.
If $f \in S_{2k}^\mathrm{new}(\Gamma_0(N))$, then $(R_p)^2(f)=f$.
For $1 \le \alpha \le n$ the space $V(p^\alpha)(S_{2k}(\Gamma_0(p^{n-\alpha}M)))$
is contained in the $0$ eigenspace of $R_p^2$.
\end{lem}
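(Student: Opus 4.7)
The plan rests on examining the Fourier expansions. Recall that if $f(z)=\sum_{m\ge 1}a_m q^m$ then
\[
R_p^2(f)(z)=\sum_{m\ge 1}\kro{m}{p}^2 a_m q^m=\sum_{\gcd(m,p)=1}a_m q^m,
\]
so $R_p^2$ is the projection that kills all Fourier coefficients indexed by multiples of $p$. Both assertions of the lemma become statements about which Fourier coefficients vanish.

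For the second statement, let $g=V(p^\alpha)(f)$ with $f\in S_{2k}(\Gamma_0(p^{n-\alpha}M))$ and $\alpha\ge 1$. By definition $V(p^\alpha)(f)(z)=p^{-\alpha k}f(p^\alpha z)$, so if $f=\sum b_m q^m$ then
\[
g(z)=p^{-\alpha k}\sum_{m\ge 1}b_m q^{p^\alpha m}.
\]
Every Fourier coefficient of $g$ is supported on multiples of $p^\alpha$, in particular on multiples of $p$. Hence $R_p(g)=0$, and \emph{a fortiori} $R_p^2(g)=0$. This handles the whole old subspace $V(p^\alpha)S_{2k}(\Gamma_0(p^{n-\alpha}M))$ without any further work.

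For the first statement, the plan is to reduce to a basis of primitive forms, as in the preceding propositions. By Atkin--Lehner \cite{A-L}, $S_{2k}^{\mathrm{new}}(\Gamma_0(N))$ has a basis of primitive forms. Let $f=\sum a_m q^m$ be such a primitive form, with $N=p^nM$ and $n\ge 2$. Then $f$ is a $U_p$-eigenform, and since $p^2\mid N$ a standard Atkin--Lehner result (cf.\ \cite[Theorem~3]{A-L}) gives the eigenvalue $a_p=0$, so $U_p(f)=0$. Unwinding $U_p(f)=\sum_m a_{pm}q^m$, this means $a_m=0$ for every $m$ with $p\mid m$. Plugging this into the formula for $R_p^2$ above,
\[
R_p^2(f)=\sum_{\gcd(m,p)=1}a_m q^m=\sum_{m\ge 1}a_m q^m - \sum_{p\mid m}a_m q^m = f,
\]
so $R_p^2$ acts as the identity on each primitive form, and therefore on all of $S_{2k}^{\mathrm{new}}(\Gamma_0(N))$.

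There is no real obstacle here; the only subtle point is invoking the Atkin--Lehner result $a_p=0$ for primitive forms at level $p^nM$ with $n\ge 2$, which is what makes the hypothesis $n\ge 2$ essential in the first statement. The hypothesis $p$ odd enters only to make $R_p$ an operator on $S_{2k}(\Gamma_0(N))$ in the first place via \cite[Lemma~33]{A-L}.
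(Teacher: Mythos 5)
Your proof is correct and follows essentially the same route as the paper's: reduce the first claim to primitive forms, invoke the Atkin--Lehner fact that $a_p=0$ when $p^2\mid N$ to conclude $a_m=0$ for $p\mid m$, observe that $R_p^2$ is exactly the projection killing the coefficients with $p\mid m$, and note that forms in the image of $V(p^\alpha)$ have all coefficients supported on multiples of $p$, so already $R_p$ annihilates them. No gaps.
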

\begin{proof}
If $f(z) = \sum_{n=1}^{\infty}a_nq^n \in S_{2k}^\mathrm{new}(\Gamma_0(N))$ is a primitive form, as $p^2 \mid N$, we have
$a_p=0$ and consequently $a_m=0$ for any $m$ divisible by $p$. Thus
$f(z) = \sum_{\substack{n=1\\(n,p)=1}}^{\infty}a_nq^n$.
Since $S_{2k}^\mathrm{new}(\Gamma_0(N))$ has a basis of primitive forms, this holds for any $f \in S_{2k}^\mathrm{new}(\Gamma_0(N))$.
It now follows that
\[R_p^2(f)(z) = \sum_{\substack{n=1\\(n,p)=1}}^{\infty} \kro{n^2}{p}a_nq^n = \sum_{\substack{n=1\\(n,p)=1}}^{\infty}a_nq^n =f(z).\]
Let $g(z) = f(p^\alpha z)$ where
$f(z) = \sum_{n=1}^{\infty}a_nq^n  \in S_{2k}(\Gamma_0(p^{n-\alpha}M))$. Then $g(z) = \sum_{n=1}^{\infty}a_nq^{p^{\alpha}n}$.
Since $\alpha \ge 1$, clearly $R_p(g) =0$. Hence the lemma follows.
\end{proof}
Following exactly similar arguments we also have the following lemma.
\begin{lem}\label{lem:lem9p^nM}
Let $N=2^nM$ with $M$ odd and $n \ge 4$.
If $f \in S_{2k}^\mathrm{new}(\Gamma_0(N))$, then $(R_{\chi})^2(f)=f$.
For $1 \le \alpha \le n$ the space $V(p^\alpha)(S_{2k}(\Gamma_0(p^{n-\alpha}M)))$
is contained in the $0$ eigenspace of $R_{\chi}^2$.
\end{lem}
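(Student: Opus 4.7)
The plan is to mirror the proof of Lemma~\ref{lem:lem8p^nM} almost verbatim, with the odd prime $p$ replaced by $2$ and the quadratic character $\bigl(\tfrac{\cdot}{p}\bigr)$ replaced by $\bigl(\tfrac{-1}{\cdot}\bigr)$. Since $n \ge 4$ we have $16 \mid N$, so $R_{\chi}$ is indeed an operator on $S_{2k}(\Gamma_0(N))$ by~\cite[Lemma 33]{A-L}, and hence so is $R_{\chi}^2$. The key structural fact driving both parts is that the Kronecker symbol $\kro{-1}{n}$ vanishes for even $n$ and squares to $1$ for odd $n$, so $R_{\chi}^2$ acts as the projection onto the $q$-expansion coefficients supported on odd indices.

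For the first assertion, I would start with a primitive form $f(z)=\sum a_n q^n \in S_{2k}^{\mathrm{new}}(\Gamma_0(N))$. Since $2^2 \mid N$, the Atkin--Lehner theory (\cite[Theorem 3]{A-L}) forces $a_2=0$, and then the multiplicativity $a_{mn}=a_m a_n$ for $\gcd(m,n)=1$ together with $a_{2^k}=a_2 a_{2^{k-1}}$ (valid at primes dividing the level with $p^2 \mid N$) gives $a_m=0$ for every even $m$. Therefore
\[
R_{\chi}^2(f)(z)=\sum_{n=1}^{\infty}\kro{-1}{n}^{2}a_n q^n=\sum_{\substack{n \ge 1\\ n\ \text{odd}}}a_n q^n = f(z).
\]
Since $S_{2k}^{\mathrm{new}}(\Gamma_0(N))$ admits a basis of primitive forms by~\cite[Lemma 18]{A-L}, the identity $R_{\chi}^2(f)=f$ extends to the whole new subspace by linearity.

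For the second assertion (reading $p^\alpha$ in the statement as $2^\alpha$, consistent with $R_{\chi}$ being the character twist associated to the prime $2$), let $g=V(2^\alpha)(f)$ with $\alpha \ge 1$ and $f(z)=\sum a_n q^n \in S_{2k}(\Gamma_0(2^{n-\alpha}M))$. Since $V(d)(f)(z)=f(dz)$ in the normalization of the paper, we obtain the Fourier expansion $g(z)=\sum_{n \ge 1}a_n q^{2^\alpha n}$, which is supported only on indices divisible by $2^\alpha$, in particular on even indices. Hence $\kro{-1}{2^\alpha n}=0$ for every $n$, so $R_{\chi}(g)=0$ and therefore $R_{\chi}^2(g)=0$. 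This places $V(2^\alpha)S_{2k}(\Gamma_0(2^{n-\alpha}M))$ in the $0$-eigenspace of $R_{\chi}^2$, which is all that is claimed.

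There is no serious obstacle here; the argument is purely a Fourier-coefficient calculation once the vanishing of $a_n$ at even $n$ for primitive newforms of level divisible by $4$ is invoked. The only point deserving care is the reduction from general $f \in S_{2k}^{\mathrm{new}}(\Gamma_0(N))$ to the primitive case, which is handled exactly as in Lemma~\ref{lem:lem8p^nM}.
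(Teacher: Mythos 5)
Your proposal is correct and follows exactly the route the paper intends: the paper proves Lemma~\ref{lem:lem9p^nM} by declaring it ``exactly similar'' to Lemma~\ref{lem:lem8p^nM}, and your argument is precisely that adaptation (vanishing of even-indexed coefficients of primitive newforms since $4\mid N$, squaring of $\kro{-1}{n}$ on odd indices, and vanishing of $R_{\chi}$ on forms supported on even indices). Your reading of the paper's $p^{\alpha}$ as $2^{\alpha}$ in the second assertion is also the correct interpretation of what is evidently a typo in the statement.
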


Since $R_p^2$ and $R_{\chi}^2$ are self-adjoint operators, 
using Corollary~\ref{cor:corp^nM} and
Lemmas~\ref{lem:lem8p^nM} and \ref{lem:lem9p^nM},
and following a similar argument as in Theorem~\ref{thm:p^nM} we obtain 
the following theorem (Theorem~\ref{thm:sec2thm5} of Section~\ref{sec:mainresults}).
\begin{thm}
Let $N=2^{\beta}p_1p_2\cdots p_rq_1^{\alpha_1}q_2^{\alpha_2}\cdots q_s^{\alpha_s}$ 
where $p_i,\ q_i$ are distinct odd primes and $\beta \ge 4$ and $\alpha_j \ge 2$ 
for all $1\le j \le s$. Then 
$f \in S_{2k}^{\mathrm{new}}(\Gamma_0(N))$ 
if and only if $Q_{p_i}(f) = -f = Q_{p_i}'(f)$ for all $1\le i \le r$, 
$(R_{q_j})^2(f) = f$ for all $1\le j \le s$ and $(R_{\chi})^2(f) = f$, and 
$S_{q^\gamma,\gamma-1}(f)=0$ for all primes $q$ such that 
$q^\gamma \| N$ with $\gamma \ge 2$.
\end{thm}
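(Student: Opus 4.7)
The plan is to mimic the proof of Theorem~\ref{thm:p^nM} and reduce both directions to an eigenspace-versus-eigenspace orthogonality argument via the decomposition
\[
S^{\mathrm{old}}_{2k}(\Gamma_0(N))=\sum_{p\mid N}\bigl(S_{2k}(\Gamma_0(N/p))+V(p)S_{2k}(\Gamma_0(N/p))\bigr).
\]
For the forward direction, given $f\in S_{2k}^{\mathrm{new}}(\Gamma_0(N))$, I would verify each listed eigenvalue equation directly from results already in the paper: the identities $Q_{p_i}(f)=-f=Q_{p_i}'(f)$ follow from Lemma~\ref{lem:lem1pM} together with surjectivity of $W_{p_i}$ on the new space; the equalities $(R_{q_j})^2(f)=f$ and $(R_\chi)^2(f)=f$ are Lemmas~\ref{lem:lem8p^nM} and~\ref{lem:lem9p^nM}; and the vanishing $S_{p^\gamma,\gamma-1}(f)=0$ for every $p^\gamma\|2^\beta M_2$ is provided by Corollary~\ref{cor:corp^nM}, which places the new space inside the zero eigenspace of each such $S_{p^\gamma,\gamma-1}$.

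For the converse, assume $f\in S_{2k}(\Gamma_0(N))$ satisfies every listed eigenvalue equation. It suffices to show that $f$ is orthogonal to both $S_{2k}(\Gamma_0(N/p))$ and $V(p)S_{2k}(\Gamma_0(N/p))$ for each prime $p\mid N$, and I would split this into three cases according to the exponent of $p$ in $N$. When $p=p_i$ with $p_i\|N$, the self-adjoint operators $Q_{p_i}$ and $Q_{p_i}'$ (Proposition~\ref{prop:prop2pM}) separate the $-1$-eigenspace containing $f$ from the $p_i$-eigenspaces that contain the two old subspaces, exactly as in Theorem~\ref{thm:p^nM}. When $p=q_j$ is odd with $q_j^{\alpha_j}\|N$ and $\alpha_j\ge 2$, the space $S_{2k}(\Gamma_0(N/q_j))$ lies in the $q_j$-eigenspace of the self-adjoint operator $S_{q_j^{\alpha_j},\alpha_j-1}$ (Corollary~\ref{cor:corp^nM} and Proposition~\ref{prop:prop4p^nM}), while $V(q_j)S_{2k}(\Gamma_0(N/q_j))$ sits in the zero eigenspace of $R_{q_j}^2$ by Lemma~\ref{lem:lem8p^nM} with $\alpha=1$; combining the eigenvalue equations for $f$ with self-adjointness yields both orthogonalities. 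The case $p=2$ is analogous, with $R_\chi^2$ replacing $R_{q_j}^2$ via Lemma~\ref{lem:lem9p^nM} and with $S_{2^\beta,\beta-1}$ handling $S_{2k}(\Gamma_0(N/2))$; the hypothesis $\beta\ge 4$ is precisely what licenses the use of Lemma~\ref{lem:lem9p^nM}.

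The main new feature, compared with Theorem~\ref{thm:p^nM}, is that the primed operators $S_{p^\gamma,\gamma-1}'$ are traded for the twisting operators $R_p^2$ and $R_\chi^2$ to kill the $V(p)$-component of the old space, and I expect this substitution to be the only delicate point. The nontrivial input it relies on is the self-adjointness of $R_p$ and $R_\chi$ for the Petersson inner product, which the paper invokes as a standard fact and which immediately gives self-adjointness of their squares. Once the three cases above are assembled, $f$ is orthogonal to every summand of the displayed decomposition, so $f\in S_{2k}^{\mathrm{new}}(\Gamma_0(N))$, completing the proof.
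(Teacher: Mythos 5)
Your proposal is correct and follows essentially the same route as the paper: the authors likewise deduce the theorem from self-adjointness of $R_{q_j}^2$ and $R_\chi^2$, Corollary~\ref{cor:corp^nM}, Lemmas~\ref{lem:lem8p^nM} and~\ref{lem:lem9p^nM}, and the orthogonality argument of Theorem~\ref{thm:p^nM}. Your write-up is in fact more explicit than the paper's (which compresses all of this into one sentence), and correctly identifies the substitution of $R_p^2$, $R_\chi^2$ for the primed operators $S'_{p^\gamma,\gamma-1}$ as the only new ingredient.
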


\section{Characterization of old spaces}

In the previous section we described the space of newforms in 
$S_{2k}(\Gamma_0(N))$ as a common eigenspace of certain Hecke operators.
In this section we extend this description to the subspaces of old forms 
of type $V(d)S_{2k}^{\mathrm{new}}(\Gamma_0(M))$ that appear in the 
direct sum decomposition of the old space 
$S_{2k}^{\mathrm{old}}(\Gamma_0(N))$ in \eqref{eq:1}. 

We first consider the case when $N$ is square-free. In the theorem below 
we characterize the various summands in the old space  as 
common eigenspaces of the operators $Q_p$, $Q_p'$ as $p$ varies over prime 
divisors of $N$.

\begin{thm}\label{thm:pMsec}
Let $N$ be square-free. Then 
\begin{enumerate}
 \item $f \in S_{2k}(\Gamma_0(1))$ if and only if $Q_p(f) = pf$ for all $p\mid N$.
 \item Let $1 \ne M \mid N$. Then 
 $f \in S_{2k}^{\mathrm{new}}(\Gamma_0(M))$ if and only if $Q_p(f) = -f = Q_p'f$ for all $p\mid M$ and
 $Q_q(f) = qf$ for all $p\mid (N/M)$.
 \item Let $1 \ne M' \mid N$. Then $f \in V(M')S_{2k}(\Gamma_0(1))$ if and only if 
 $Q_q'(f) = qf$ for all $q\mid M'$ and
 $Q_q(f) = qf$ for all $q\mid (N/M')$.
 \item Let $M$ and $M' >1$ and $MM' \mid N$. Then 
 $f \in V(M')S_{2k}^{\mathrm{new}}(\Gamma_0(M))$ if and only if
 $Q_p(f) = -f = Q_p'f$ for all $p\mid M$,
 $Q_q'(f) = qf$ for all $q\mid M'$ and
 $Q_q(f) = qf$ for all $q\mid (N/MM')$.
\end{enumerate}
\end{thm}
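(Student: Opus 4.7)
The plan is to establish all four parts uniformly by proving the following claim: for any factorization $N = ABC$ into pairwise coprime square-free factors, the common eigenspace in $S_{2k}(\Gamma_0(N))$ of the conditions $Q_p f = -f = Q_p' f$ for $p \mid A$, $Q_q' f = qf$ for $q \mid B$, and $Q_q f = qf$ for $q \mid C$ is exactly $V(B)\,S_{2k}^{\mathrm{new}}(\Gamma_0(A))$. Parts (1)--(4) are the special cases $(A,B,C) = (1,1,N)$, $(M,1,N/M)$, $(1,M',N/M')$, and $(M,M',N/MM')$ respectively.

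For the forward direction, let $f = V(B)h$ with $h \in S_{2k}^{\mathrm{new}}(\Gamma_0(A))$. For $q \mid C$ one has $f \in S_{2k}(\Gamma_0(AB)) \subseteq S_{2k}(\Gamma_0(N/q))$, so Lemma~\ref{lem:lem2pM} gives $Q_q(f) = qf$. For $q \mid B$ we write $f = V(q)\bigl(V(B/q)h\bigr)$; since $V(B/q)h \in S_{2k}(\Gamma_0(AB/q)) \subseteq S_{2k}(\Gamma_0(N/q))$, Lemma~\ref{lem:lem4pm} gives $Q_q'(f) = qf$. For $p \mid A$ the operators $Q_p$ and $Q_p'$ at level $N$ commute with $V(B)$ (by Remark~\ref{rem:1}, since they belong to the $p$-adic Hecke algebra and $p \nmid B$) and their restrictions to $S_{2k}(\Gamma_0(A))$ agree with the corresponding level-$A$ operators via the adelic identification; applying Theorem~\ref{thm:pM} at the square-free level $A$ to $h$ yields $Q_p(h) = -h = Q_p'(h)$, hence $Q_p(f) = -f = Q_p'(f)$.

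For the backward direction, the essential input is that on all of $S_{2k}(\Gamma_0(N))$ the full $q$-eigenspace of $Q_q$ is $S_{2k}(\Gamma_0(N/q))$ and the full $q$-eigenspace of $Q_q'$ is $V(q)\,S_{2k}(\Gamma_0(N/q))$. To prove this I use the Atkin--Lehner direct sum decomposition $S_{2k}(\Gamma_0(N)) = \bigoplus_{dM_0 \mid N} V(d)\,S_{2k}^{\mathrm{new}}(\Gamma_0(M_0))$: the $q$-old part $X_q$ is the sum of blocks with $q \nmid M_0$ (on which Proposition~\ref{prop:prop1pM} identifies the $q$-eigenspace as $S_{2k}(\Gamma_0(N/q))$), while the $q$-new part is the sum of blocks with $q \mid M_0$ (forcing $q \nmid d$ by square-freeness), and on each such $q$-new block Lemma~\ref{lem:lem1pM} at level $M_0$ combined with the commutation $Q_q \circ V(d) = V(d) \circ Q_q$ for $q \nmid d$ (Remark~\ref{rem:1}) forces $Q_q = -1$. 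Conjugating by $W_q$ (which preserves the $q$-new part) and invoking Lemma~\ref{lem:lem4pm} yields the corresponding statement for $Q_q'$. Imposing $Q_q f = qf$ for $q \mid C$ now forces $f \in \bigcap_{q \mid C} S_{2k}(\Gamma_0(N/q)) = S_{2k}(\Gamma_0(AB))$, and imposing $Q_q' f = qf$ for $q \mid B$ forces $f \in \bigcap_{q \mid B} V(q)\,S_{2k}(\Gamma_0(N/q))$; a block-by-block computation (using that $V(q)$ carries the $(d',M_0)$-block into the $(qd',M_0)$-block) identifies this intersection as the sum of blocks with $B \mid d$ and $dM_0 \mid AB$, which is precisely $V(B)\,S_{2k}(\Gamma_0(A))$.

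Finally, writing $f = V(B)h$ with $h \in S_{2k}(\Gamma_0(A))$ and transferring the remaining conditions $Q_p f = -f = Q_p' f$ for $p \mid A$ to $h$ via the commutation with $V(B)$, Theorem~\ref{thm:pM} applied at level $A$ gives $h \in S_{2k}^{\mathrm{new}}(\Gamma_0(A))$, completing the proof. The main obstacle is establishing the \emph{full} $q$-eigenspaces of $Q_q$ and $Q_q'$ on $S_{2k}(\Gamma_0(N))$, since Proposition~\ref{prop:prop1pM} and Lemma~\ref{lem:lem4pm} only treat the two-dimensional slice $X_q$; the extension to the entire space requires a careful use of the Atkin--Lehner block decomposition together with the compatibility of $Q_q$ with shift operators $V(d)$ for $q \nmid d$ and with the level-inclusion $S_{2k}(\Gamma_0(M_0)) \hookrightarrow S_{2k}(\Gamma_0(N))$, both of which are transparent adelically but must be invoked explicitly.
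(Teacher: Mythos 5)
Your proof is correct and rests on the same ingredients as the paper's --- the decomposition $S_{2k}(\Gamma_0(N))=\bigoplus_{dM_0\mid N}V(d)S_{2k}^{\mathrm{new}}(\Gamma_0(M_0))$, the eigenvalue $-1$ of $Q_q$ and $Q_q'$ on every block with $q\mid M_0$, and the $q$-eigenspaces inside $X_q$ from Proposition~\ref{prop:prop1pM} and Lemma~\ref{lem:lem4pm} --- but it packages them differently, and the final step genuinely diverges. The paper proves only part (4) and filters step by step: it first kills the $q$-new blocks for each prime $q$ dividing $M'\cdot(N/MM')$, then uses the eigenspaces inside $X_q$ to force $M'\mid d$ and $(d,N/MM')=1$, and finally handles the primes $p\mid M$ by a second orthogonality argument ($g$ lies in $X_p$ yet is orthogonal to $X_p$ by self-adjointness, so $g=0$). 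You instead promote the $X_q$-statements to a global one --- the full $q$-eigenspace of $Q_q$ on $S_{2k}(\Gamma_0(N))$ is $S_{2k}(\Gamma_0(N/q))$ and that of $Q_q'$ is $V(q)S_{2k}(\Gamma_0(N/q))$ --- which is correctly justified by splitting off the $q$-new blocks, and which lets you treat all four parts uniformly by intersecting eigenspaces down to $V(B)S_{2k}(\Gamma_0(A))$; you then dispose of the primes $p\mid A$ by transporting the conditions to $h$ (where $f=V(B)h$) and invoking Theorem~\ref{thm:pM} at the lower square-free level $A$. This reuse of Theorem~\ref{thm:pM} replaces the paper's second orthogonality argument and is the cleaner route, and the global eigenspace identification is a slightly stronger statement than the paper records. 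One citation to repair: Remark~\ref{rem:1} does not literally give the commutation of $Q_p$ with $V(B)$, since $V(q)$ is a single-coset operator rather than an element of a $q$-adic double-coset Hecke algebra; the commutation you need is exactly what the paper's lemma preceding this theorem establishes, via \cite[Lemma 15]{A-L} for $\tilde{U}_p$, a matrix computation for $W_{p,N}$, and the compatibility $W_{p,N}|_{S_{2k}(\Gamma_0(M_0))}=W_{p,M_0}$.
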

The proof relies on the above description of 
eigenspaces of $Q_p$ and $Q_p'$ and the following 
additional lemma. 

\begin{lem}
Let $dM \mid N$ where $M \ne 1$ and $d$ is coprime to $M$. If 
$f \in V(d)S_{2k}^{\mathrm{new}}(\Gamma_0(M))$, then 
$Q_p(f) = -f = Q_p'f$ for all $p\mid M$.
\end{lem}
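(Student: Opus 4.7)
The plan is to reduce the claim to what is already known at level $M$, via commutation relations between the shift operator $V(d)$ and the operators $\tilde{U}_p$ and $W_p$. For the proof I will temporarily write $Q_p^{(L)}$, $(Q_p')^{(L)}$, $W_p^{(L)}$ for the operators at level $L$, to distinguish the action at level $N$ from that at level $M$.

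First, since $S_{2k}^{\mathrm{new}}(\Gamma_0(M))$ has a basis of primitive newforms by \cite[Lemma 18]{A-L}, it suffices to treat $f = V(d) g$ with $g$ a primitive newform at level $M$. For such $g$, Lemma~\ref{lem:lem1pM} applied at level $M$ gives $Q_p^{(M)}(g) = -g$; since $W_p^{(M)}$ is an involution preserving the new space, the identity $(Q_p')^{(M)} = W_p^{(M)} Q_p^{(M)} W_p^{(M)}$ yields $(Q_p')^{(M)}(g) = -g$ as well.

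The key step is to establish the commutation relations
\begin{equation*}
\tilde{U}_p \circ V(d) = V(d) \circ \tilde{U}_p \quad \text{and} \quad W_p^{(N)} \circ V(d) = V(d) \circ W_p^{(M)},
\end{equation*}
viewed as maps from $S_{2k}(\Gamma_0(M))$ into $S_{2k}(\Gamma_0(N))$. The first is immediate from $(p, d) = 1$: the substitution $s \mapsto ds \bmod p$ is a bijection on $\Z/p\Z$, so both sides equal $p^{-k}\sum_{t=0}^{p-1} h((dz+t)/p)$. For the second, set $D = \mat{d}{0}{0}{1}$ and take the defining matrices $\sigma_N = \mat{p\beta}{1}{N\gamma}{p}$, $\sigma_M = \mat{p\beta'}{1}{M\gamma'}{p}$ of $W_p^{(N)}$ and $W_p^{(M)}$. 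Writing $N = dMe$, a direct matrix multiplication gives
\begin{equation*}
D\sigma_N(\sigma_M D)^{-1} = \mat{p\beta - dM\gamma'/p}{d\beta' - \beta}{M(e\gamma - \gamma')}{p\beta' - Me\gamma/p}.
\end{equation*}
Since $p \mid M$ all entries are integers, the lower-left entry is visibly divisible by $M$, and the determinant equals $\det(D\sigma_N)/\det(\sigma_M D) = dp/(dp) = 1$, so this matrix lies in $\Gamma_0(M)$. The $\Gamma_0(M)$-invariance of $h \in S_{2k}(\Gamma_0(M))$ then gives $h|_{2k}(D\sigma_N) = h|_{2k}(\sigma_M D)$, and tracking the factors of $d^{\pm k}$ coming from $V(d)h = d^{-k} h|_{2k} D$ yields the claimed commutation.

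Combining the pieces, for $f = V(d) g$ with $g \in S_{2k}^{\mathrm{new}}(\Gamma_0(M))$ I compute
\begin{equation*}
Q_p^{(N)}(f) = \tilde{U}_p W_p^{(N)} V(d) g = \tilde{U}_p V(d) W_p^{(M)} g = V(d) \tilde{U}_p W_p^{(M)} g = V(d) Q_p^{(M)}(g) = -f.
\end{equation*}
For $(Q_p')^{(N)} = W_p^{(N)} \tilde{U}_p$, the fact that $\tilde{U}_p$ preserves $S_{2k}(\Gamma_0(M))$ (since $p \mid M$) allows the same chain to run with $\tilde{U}_p g$ in place of $g$, yielding $(Q_p')^{(N)}(f) = V(d) (Q_p')^{(M)}(g) = -f$. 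The only real obstacle is the matrix identity above; it is not deep but rests critically on $p \mid M$ for integrality of the diagonal entries and on $N = dMe$ with $(d, M) = 1$ for the lower-left entry to be an integer multiple of $M$.
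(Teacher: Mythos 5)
Your proof is correct and follows essentially the same route as the paper: both reduce the claim to the known eigenvalue $-1$ at level $M$ by commuting $V(d)$ past $\tilde{U}_p$ (using $(d,p)=1$) and past the Atkin--Lehner operator, with the identity $W_{p}^{(N)}\circ V(d)=V(d)\circ W_{p}^{(M)}$ on $S_{2k}(\Gamma_0(M))$ verified by a matrix congruence in $\Gamma_0(M)$. The only difference is presentational: you carry out the matrix computation explicitly in one step, where the paper splits it into ``$W_{p,N}=W_{p,M}$ on level-$M$ forms'' plus a commutation with $V(d)$, and cites Atkin--Lehner for the $\tilde{U}_p$ part.
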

\begin{proof}
Let $f = V(d)f_1$ where $f_1 \in S_{2k}^{\mathrm{new}}(\Gamma_0(M))$ and 
$p$ be a prime divisor of $M$.
Then $Q_p(f) = \tilde{U}_p W_{p,N} (V(d)f_1)$ where $W_{p,N}$ is the Atkin-Lehner 
operator on $S_{2k}(\Gamma_0(N))$.
Note that for $f \in S_{2k}(M)$, we have $W_{p,N}(f) = W_{p,M}(f)$. 
Further $W_{p,N}$ commutes with $V(d)$ on $S_{2k}(\Gamma_0(M))$ as 
the matrix 
$W_{p,N}V(d) (V(d)W_{p,N})^{-1} \in \Gamma_0(M)$.
Now by~\cite[Lemma 15]{A-L}, $\tilde{U}_p$ commutes with $V(d)$ as $(d,p)=1$. 
Hence by Theorem~\ref{thm:pM},
\[Q_p(f) = V(d)\tilde{U}_p W_{p,M}f_1= V(d)Q_p(f_1) = -V(d)f_1 = -f.\]
The case of $Q_p'$ follows similarly.
\end{proof}

\begin{proof}[Proof of Theorem~\ref{thm:pMsec}]
We shall give proof of $(4)$. The other parts follow similarly.
Let $M$ and $M' >1$ and $N=MM't$ for some $t\in \N$.
If $f\in V(M')S_{2k}^{\mathrm{new}}(\Gamma_0(M))$, then by above lemma 
$Q_p(f) = -f = Q_p'f$ for all $p\mid M$. Further for each $q \mid M'$, 
$V(M')S_{2k}^{\mathrm{new}}(\Gamma_0(M)) \subseteq V(q)S_{2k}(\Gamma_0(N/q))$, 
and so $Q_q'(f) = qf$ for all $q \mid M'$. Similarly for each $q \mid t$ we have 
$V(M')S_{2k}^{\mathrm{new}}(\Gamma_0(M)) \subseteq S_{2k}(\Gamma_0(N/q))$ and so 
$Q_q(f) = qf$ for all $q \mid t$.

Conversely let $f \in S_{2k}(\Gamma_0(N))$ be such that 
$Q_p(f) = -f = Q_p'f$ for all $p\mid M$, $Q_q'(f) = qf$ for all $q\mid M'$ and 
$Q_q(f) = qf$ for all $q\mid (N/MM')$. Let $q$ be any prime such that $q\mid M't$.
Let $V:=\oplus_{dr\mid N,q\mid r} V(d)S_{2k}^{\mathrm{new}}(\Gamma_0(r))$ and  
$W:=\oplus_{dr\mid N,(q,r)=1} V(d)S_{2k}^{\mathrm{new}}(\Gamma_0(r))$
Then $S_{2k}^{\mathrm{old}}(\Gamma_0(N)) =V \oplus W$ and since $N$ is 
square-free we have $W=X_q$. By previous lemma $V$ is contained in the intersection of 
$-1$ eigenspace of $Q_q$ and $Q_q'$. Now $f$ can be uniquely written as 
$f = v + w$ with $v\in V$ and $w\in W$. If $q\mid t$, then $Q_qf = qf$ and 
so $qv + qw = Q_qv + Q_qw = -v + Q_qw$ where $Q_qw \in W$. Thus $v=0$ and 
$f\in W$. If $q\mid M'$ we get the same conclusion by using the operator 
$Q_q'$ instead. Since the above argument works for all primes dividing $M't$, 
we get that 
$ f \in \oplus_{dr \mid N, r \mid M} V(d)S_{2k}^{\mathrm{new}}(\Gamma_0(r))$.

Now let $q \mid M'$ be any prime. Then 
$\oplus_{dr \mid N, r \mid M, (d,q)=1} V(d)S_{2k}^{\mathrm{new}}(\Gamma_0(r)) 
\subseteq S_{2k}(\Gamma_0(N/q))$ while 
$\oplus_{dr \mid N, r \mid M, q \mid d} V(d)S_{2k}^{\mathrm{new}}(\Gamma_0(r)) 
\subseteq V(q)S_{2k}(\Gamma_0(N/q))$. Thus $f \in X_q$. Since $Q_q'f = qf$ and 
the $q$ eigenspace of $Q_q'$ in $X_q$ is precisely 
$V(q)S_{2k}(\Gamma_0(N/q))$, we get that $f$ belongs to 
$\oplus_{dr \mid N, r \mid M, q\mid d}
V(d)S_{2k}^{\mathrm{new}}(\Gamma_0(r))$. 
Applying the same argument for all primes $q\mid M'$ we get that 
$f$ belongs to 
$\oplus_{dr \mid N, r \mid M, M'\mid d} V(d)S_{2k}^{\mathrm{new}}(\Gamma_0(r))$.
Now let $q$ be a prime dividing $t$. Then we have that
$\oplus_{dr \mid N, r \mid M, M'\mid d, (d,q)=1}
V(d)S_{2k}^{\mathrm{new}}(\Gamma_0(r)) \subseteq S_{2k}(\Gamma_0(N/q))$ while 
$\oplus_{dr \mid N, r \mid M, M'\mid d, q \mid d} 
V(d)S_{2k}^{\mathrm{new}}(\Gamma_0(r)) \subseteq V(q)S_{2k}(\Gamma_0(N/q))$. 
Thus $f \in X_q$. Now $Q_qf = qf$ implies that $f\in \oplus_{dr \mid N, r \mid M, M'\mid d, (d,q)=1}
V(d)S_{2k}^{\mathrm{new}}(\Gamma_0(r))$. As before applying this argument for
all primes $q \mid t$ we get 
that $f$ belongs to $\oplus_{dr \mid MM', r \mid M, M'\mid d} 
V(d)S_{2k}^{\mathrm{new}}(\Gamma_0(r)):=Y$. 

Finally let $p$ be a prime dividing $M$. Then
$Y = Y_1 \oplus Y_2 \oplus Y_3$ where 
$Y_1 = \oplus_{dr \mid MM', r \mid M, M'\mid d, (dr,p)=1} 
V(d)S_{2k}^{\mathrm{new}}(\Gamma_0(r))$, 
$Y_2 = \oplus_{dr \mid MM', r \mid M, M'\mid d, p\mid d}\newline
V(d)S_{2k}^{\mathrm{new}}(\Gamma_0(r))$ and 
$Y_3 = \oplus_{dr \mid MM', r \mid M, M'\mid d, p\mid r} 
V(d)S_{2k}^{\mathrm{new}}(\Gamma_0(r))$. Clearly 
$Y_1 \oplus Y_2 \subseteq X_p$. We write $f$ uniquely as 
$f = g + h$ where $g\in Y_1 \oplus Y_2$ and $h \in Y_3$. 
Since $Q_p(f)=-f=Q_p'f$ and $Q_p(h)=-h=Q_p'h$ we get that 
$Q_p(g)=-g=Q_p'g$. Thus $g$ is orthogonal to $X_p$ but 
$g \in X_p$, hence $g=0$. Applying the same argument for 
all primes $p$ dividing $M$ we get that 
$f \in \oplus_{dr \mid MM', r=M, M'\mid d}$ which is precisely
$V(M')S_{2k}^{\mathrm{new}}(\Gamma_0(M))$.
\end{proof}

We now consider the case $N=p^n$ where $p$ is a prime. The characterization
of the old space summands will be done inductively on $n$. 
The case $n=1$ follows from Theorem~\ref{thm:pMsec}. We assume that $n \ge 2$. 
It follows from \eqref{eq:1} that 
\[S_{2k}(\Gamma_0(p^{n})) = S_{2k}(\Gamma_0(p^{n-1})) \oplus 
\bigoplus_{r=0}^n V(p^{n-r})S_{2k}^{\mathrm{new}}(\Gamma_0(p^r)).\]
By Corollary~\ref{cor:cor1p^nM}, 
$S_{2k}(\Gamma_0(p^{n-1}))$ is precisely the $p$ eigenspace 
of the operator $S_{p^n,p^{n-1}}$ on $S_{2k}(\Gamma_0(p^{n}))$ and hence we 
can characterize the summands that appear inside the direct sum 
decomposition of $S_{2k}(\Gamma_0(p^{n-1}))$ using induction hypothesis. 

So we need to only deal with the spaces of type 
$V(p^{n-r})S_{2k}^{\mathrm{new}}(\Gamma_0(p^r))$ for $0 \le r \le n$. 
Using Lemma~\ref{lem:lem4p^nM} the operator 
$W_{p^n}$ maps $S_{2k}^\mathrm{new}(\Gamma_0(p^r))$ onto 
$V(p^{n-r})S_{2k}^\mathrm{new}(\Gamma_0(p^r))$. Thus a 
form $f \in S_{2k}(\Gamma_0(p^{n}))$ belongs to the space
$V(p^{n-r})S_{2k}^{\mathrm{new}}(\Gamma_0(p^r))$ if and only if 
$W_{p^n}(f)$ belongs to $S_{2k}^{\mathrm{new}}(\Gamma_0(p^r))$. 
By the previous section we already know how to characterize the 
forms in $S_{2k}^{\mathrm{new}}(\Gamma_0(p^r))$, thus we 
can characterize $W_{p^n}(f)$ and hence $f$. 

Using above similar statement as Theorem~\ref{thm:pMsec} can be made 
for a general level $N$.

\end{document}